\documentclass[reqno,oneside,11pt]{amsart}
\usepackage{amsmath,amssymb,latexsym,soul,cite,mathrsfs,stmaryrd}
\usepackage{color,enumitem,graphicx}
\usepackage[colorlinks=true,urlcolor=blue,
citecolor=blue,linkcolor=blue,linktocpage,pdfpagelabels,
bookmarksnumbered,bookmarksopen]{hyperref}
\usepackage[dvipsnames,svgnames]{xcolor}
\usepackage[english]{babel}
\usepackage{geometry}
\usepackage[percent]{overpic}

\usepackage{subcaption}
\captionsetup[subfigure]{labelformat=parens,
labelfont=normalfont, textfont=normalfont}

\usepackage{amsthm}
\usepackage{mathtools}
\DeclarePairedDelimiter{\norm}{\lVert}{\rVert}
\DeclarePairedDelimiter{\sprod}{\langle}{\rangle}
\DeclarePairedDelimiter{\abs}{\lvert}{\rvert}

\usepackage{bbm}
\pretolerance=10000

\newtheorem{theorem}{Theorem}
\newtheorem{lemma}[theorem]{Lemma}

\newtheorem{proposition}[theorem]{Proposition}

\newtheorem{theoremletter}{Theorem}

\newtheorem{theoremroman}{Theorem}

\theoremstyle{definition}
\theoremstyle{remark}\newtheorem{remark}[theorem]{Remark}
\theoremstyle{definition}


\newcommand{\innerthmname}{}

\theoremstyle{definition}

\makeatletter
\def\namedlabel#1#2{\begingroup
	#2
	\def\@currentlabel{#2}
	\phantomsection\label{#1}\endgroup
}
\makeatother

\numberwithin{theorem}{section} 
\numberwithin{equation}{section}

\setcounter{tocdepth}{1}  

\newtheorem{asu}{}


\title[Multiplicity of solutions for Gross-Pitaevskii equations]{Multiplicity of solutions for Gross-Pitaevskii equations on Riemannian manifolds}

\author[D. Corona]{Dario Corona}
\author[S. Nardulli]{Stefano Nardulli}
\author[R. Oliver-Bonafoux]{Ramon Oliver-Bonafoux}
\author[G. Orlandi]{Giandomenico Orlandi}

\address[D. Corona]{
	School of Science and Technology,
	University of Camerino
	\newline\indent 
62032, Camerino, Italy}
\email{\href{mailto:dario.corona@unicam.it}{dario.corona@unicam.it}}

\address[S. Nardulli]{Department of Mathematics,
	Federal University of ABC
	\newline\indent 
09210-580, S\~ao Paulo, Brazil}
\email{\href{mailto:stefano.nardulli@ufabc.edu.br}{stefano.nardulli@ufabc.edu.br}}

\address[R. Oliver-Bonafoux]{Dipartimento di Informatica,
	Universit\`a di Verona,
	\newline\indent
	37134,Verona, Italy
}
\email{\href{mailto:ramon.oliverbonafoux@univr.it}{ramon.oliverbonafoux@univr.it}}

\address[G. Orlandi]{Dipartimento di Informatica,
	Universit\`a di Verona,
	\newline\indent
	37134,Verona, Italy
}
\email{\href{mailto:giandomenico.orlandi@univr.it}{giandomenico.orlandi@univr.it}}

\date{}

\subjclass[2020]{
	35Q56, 
	35J20, 
	58E05, 
	49Q20, 
}
\keywords{Time-independent Gross-Pitaevskii equations,
	Ginzburg-Landau functionals,
	Lusternik--Schnirelmann and Morse theories,
	$\Gamma$--convergence,
higher codimension isoperimetric problem}

\begin{document}

\begin{abstract}
	We provide a multiplicity result for solutions of time-independent
	Gross-Pitaevskii equations on closed Riemannian manifolds.
	Such solutions arise as (possibly non-minimizing) critical points of the
	Ginzburg-Landau energy having prescribed momentum according to a given
	tangent velocity field. Lower bounds on the multiplicity of solutions are
	obtained in terms of the topology of the maximum velocity set, in the small
	momentum and  vorticity core size regime.
	The proof relies on methods from
	critical point theory and $\Gamma$–convergence for Ginzburg-Landau
	functionals as well as on some new results for codimension 2
	isoperimetric-type problems in the small flux regime, possibly of
	independent interest.
\end{abstract}

\maketitle

\section{Introduction and Main Result}

Given a smooth closed and oriented Riemannian manifold $(M,g)$ of dimension $N \ge 3$,
let $X$ be a smooth solenoidal tangent vector field on $M$ and consider the following (time-independent) \textit{Gross-Pitaevskii} equation
\begin{equation}
	\label{eq:Ginzburg-PDE}\tag{GP$_{\varepsilon,X}$}
	\lambda\,|\log\varepsilon|\, i\, g(X,\nabla \psi)
	= \Delta \psi - \frac{1}{\varepsilon^2}\nabla W(\psi)
	\quad \text{on } M,
\end{equation}
where $\psi \colon M  \to \mathbb{C}$ is a \textit{wavefunction},
$\varepsilon>0$ is a small parameter (a characteristic length),
$\lambda\in\mathbb{R}$ plays the role of a Lagrange multiplier and
$W\colon \mathbb{C}\to[0,+\infty)$ is a $C^2$ potential of
Ginzburg–Landau type, namely it vanishes only on the unit circle $\mathbb{S}^1$ (the typical choice is $W(z)=\tfrac{1}{4}(1-|z|^2)^2$).
Equation \eqref{eq:Ginzburg-PDE} was considered in the Euclidean setting in \cite{BOS2004} and subsequently by Chiron \cite{chiron2005} to analyse the \textit{profiles} of the \textit{traveling waves} (\textit{solitons}) for the \textit{time-dependent} Gross-Pitaevskii equation
\begin{equation}\label{eq:GP_RN}
	i \, \lvert \log \varepsilon \rvert \Psi_t + \Delta \Psi = \frac{1}{\varepsilon^2}\nabla W(\Psi), \quad \Psi: \mathbb{R}^N \times \mathbb{R} \to \mathbb{C}.
\end{equation}
Equation \eqref{eq:GP_RN} is a Schrödinger-type equation modeling a variety of phenomena such as superfluidity and nonlinear optics. 
For instance, when the space domain $\mathbb{R}^N$ is replaced by $M$ (see equation \eqref{eq:GrossPitaevskii} below) the model describes superfluidity in thin shells.
Existence of traveling waves for \eqref{eq:GP_RN} was first proven by Bethuel and Saut \cite{BethuelGravejatSaut2009} (in dimension 2) and in \cite{BOS2004} as well as by Chiron \cite{chiron2004} in dimension larger than 3, see also Mari\c{s} \cite{Maris2013}.
In dimension $1$, traveling waves for \eqref{eq:GP_RN} have been extensively studied by Bethuel, Gravejat, Saut and Smets \cite{BethuelGravejatSautSmets2008,BethuelGravejatSaut2009,BethuelGravejatSmets2015,GravejatSmets2015}. Equation \eqref{eq:Ginzburg-PDE} (or closely related variants) has also been considered in $\mathbb{R}^3$  to describe vorticity patterns for rotating \textit{Bose-Einstein condensates}, mostly in the spirit of the seminal book by Bethuel, Brezis and Hélein \cite{BBH}; in such case, the tangential velocity vector field $X$ is typically a multiple of $(-y,x,0)$, corresponding to a rotation around the $z$–axis.
Without attempting to be exhaustive,
for a broader overview of vortex structures and rotating 
condensates we refer to the book by Aftalion \cite{AftalionBook}, based on previous contributions such as \cite{AftalionAlamaBronsard2005, AftalionJerrard2003,AftalionQu,AftalionRiviere,AlamaBronsard2005,IgnatMillotJFA,IgnatMillotRMP}.
In this paper we do not restrict the study to a velocity field $X$ representing a constant translation or rotation in a Euclidean space, but rather take a more general field on a Riemannian manifold.
Note that, since $X$ is divergence-free, \eqref{eq:Ginzburg-PDE} is equivalent to
\begin{equation}\label{eq:GL_equivalent}
	\left(\nabla -i \, \lambda |\log \varepsilon | \frac{X}{2} \right)^2\psi = \frac{1}{\varepsilon^2}\nabla_z \hat{W}(x,\psi) \quad \text{ on } M,
\end{equation}
where $\hat{W}(x,z)=W(z)+\varepsilon^2 | \log \varepsilon |^2 \lambda^2 \frac{\norm{X(x)}^2}{4}z$.
Equation \eqref{eq:GL_equivalent} is related to the
system arising in the Ginzburg-Landau theory of superconductivity, with $X$
playing the role of a gauge field.

Equation \eqref{eq:Ginzburg-PDE} is variational: it can be seen as the
Euler-Lagrange equation of the Gross-Pitaevskii energy $F_{\varepsilon}:
W^{1,2}(M,\mathbb{C}) \times \mathbb{R} \mapsto \mathbb{R}$, defined as
\begin{equation*}
	F_{\varepsilon}(u,\lambda):= E_{\varepsilon}(u)-4\lambda\Phi_X(u),
\end{equation*}
where $E_{\varepsilon}: W^{1,2}(M,\mathbb{C}) \mapsto \mathbb{R}$ is given by
\begin{equation}
	\label{eq:Ginzburg-Landau}
	E_{\varepsilon}(u) \coloneqq
	\frac{1}{\pi|\log\varepsilon|}
	\int_M\left( \frac{1}{2}\norm{\nabla u}^2
		+ \frac{1}{\varepsilon^2}W(u)
	\right)\mathrm{d}v_g,
\end{equation}
which is the (rescaled) Ginzburg-Landau energy, while $\Phi_X\colon
W^{1,2}(M,\mathbb{C}) \to \mathbb{R}$ is the \textit{momentum} corresponding to
$X$, which is defined as
\begin{equation}
	\label{eq:def-flux}
	\Phi_X(u)\coloneqq
	\frac{1}{2\pi}
	\int_{M} g\big(j(u),X\big) \,\mathrm{d}v_g,
\end{equation}
where $j(u)\colon M \to TM$ is the superfluid velocity (or superconducting
current), defined for $u(x) = u^1(x) + i u^2(x)$ as
\begin{equation}
	\label{eq:def-prejacobian}
	j(u)(x) \coloneqq
	u^1(x)\nabla u^2(x) - u^2(x)\nabla u^1(x).
\end{equation}
The vector field $j(u)$ is related to the \textit{vorticity} of $u$ which,
in the $3$--dimensional case, 
hence for $u \in W^{1,2}_{\mathrm{loc}}(\mathbb{R}^3,\mathbb{C})$,
is defined as $J(u):= \nabla u^1 \times \nabla u^2=\frac{1}2\mathrm{curl}(j(u))$.

For manifolds of general dimension we will handle the previous quantities using the language of differential forms, with the identification
$j(u)=u^1\mathrm{d}u^2-u^1\mathrm{d}u^2$ and 
$J(u):= \mathrm{d}u^1 \wedge \mathrm{d}u^2 \in \Omega^2(M)$, so that
\begin{equation}
	\label{eq:def-Jacobian}
	J(u) = \frac{1}{2} \,\mathrm{d} j(u).
\end{equation}
Rather than working
with the Gross-Pitaevskii energy, we will take the alternative variational
formulation of considering $E_\varepsilon$ restricted to the space
\begin{equation}
	\label{eq:def-Xphi}
	\mathcal{X}_{\phi} \coloneqq
	\Big\{
		u \in W^{1,2}(M,\mathbb{C}) :
		\Phi_X(u) = \phi,
	\Big\},
\end{equation}
which contains the complex-valued maps of fixed momentum $\phi>0$.
The restriction of $E_\varepsilon$ to $\mathcal{X}_\phi$ will be denoted as $E_{\varepsilon,\phi}$,
so that its critical points are solutions of~\eqref{eq:Ginzburg-PDE} for some
Lagrange multiplier $\lambda$ (see Lemma \ref{lemma:basic_functional} below).

The main contribution of this paper is to provide a lower bound on the number of 
solutions of \eqref{eq:Ginzburg-PDE}, according to the \textit{maximum velocity
set} of $X$ as defined in \eqref{eq:set_Sigma} below.
Such a lower bound is obtained for any pair of fixed vorticity and momentum
parameters which are \textit{sufficiently small}.
By \textit{solution} to \eqref{eq:Ginzburg-PDE} we mean a pair
$(\psi,\lambda)$, which, in analogy with the theory of Bose-Einstein
condensates, can be interpreted as the wave function $\psi$ of a condensate
with fixed momentum $\phi$ along with the scalar multiplier $\lambda$ which
defines the intensity and sense of the velocity field.
Moreover, our analysis also shows that the vorticity of such solutions is
concentrated around a small ring located close to a point of $M$ in which
$\|X\|$ is maximal.

We now give a precise formulation of our results.
Identifying the divergence-free vector field $X$ with a co-closed 1-form (still denoted by $X$), we make the stronger assumption that it is co-exact.
Moreover, it will also be assumed
that the maximum velocity set
\begin{equation}\label{eq:set_Sigma}
	\Sigma:= \big\{ x \in M : \norm{X(x)} = \max_{y \in M}\norm{X(y)} \big\},
\end{equation}
satisfies the following topological condition:
\begin{asu}[$\Sigma$ is a strong deformation retract of some tubular neighborhood]
	\label{ass:Sigmadelta-retraction}
	There exists $\delta>0$ such that $\Sigma$ is a strong deformation retract of its tubular neighborhood $\Sigma_\delta$, 
	where
	\begin{equation*}
		\Sigma_\delta := \big\{ x \in M : \mathrm{dist}_g(x,\Sigma) \le \delta \big\}.
	\end{equation*}
	In other words, there exists a continuous map
	\begin{equation*}
		h_\Sigma: [0,1] \times \Sigma_{\delta} \to \Sigma_\delta,
	\end{equation*}
	such that
	\begin{itemize}
		\item $h_\Sigma(0,x)=x$ for all $x \in \Sigma_\delta$;
		\item $h_\Sigma(1,x) \in \Sigma$ for all $x \in \Sigma_\delta$;
		\item $h_\Sigma(t,x)=x$ for all $(t,x) \in [0,1] \times \Sigma$.
	\end{itemize}
\end{asu}
We assume that the potential $W\colon \mathbb{C} \to [0,+\infty)$, which is of class $C^2$ and vanishes only on $\mathbb{S}^1$, 
satisfies the following standard conditions.
\begin{asu}[Nondegeneracy near $\mathbb{S}^1$]
	\label{ass:nondegeneracy}
	We have that
	\begin{equation*}
		\liminf_{\abs{u} \to 1} \frac{W(u)}{(1 - \abs{u})^2} > 0.
	\end{equation*}
\end{asu}
\begin{asu}[Subcritical growth assumption]\label{ass:subcritical}
	Let $p^*$ be the critical Sobolev exponent associated to
	$W^{1,2}(M,\mathbb{C})$, i.e., $p^*:= \frac{2N}{N-2}$. There exist
	$C_{\mathrm{sc},1}, C_{\mathrm{sc,2}}>0$ and $p \in [2,p^*)$ such that
	\begin{equation*}
		\sup_{u \in \mathbb{C}}\lvert D^2W(u) \rvert \leq C_{\mathrm{sc,1}}+C_{\mathrm{sc,2}}\lvert u\rvert^{p-2}.
	\end{equation*}
\end{asu}
\begin{asu}[Coercivity at infinity]\label{ass:coercivity}
	There exist $\alpha_c,R_c>0$ such that 
	\begin{equation*}
		\nabla W(u) \cdot u \geq \alpha_c\lvert u \rvert^2 \quad \mbox{ for all } u \in \mathbb{C} \mbox{ such that } \lvert u \rvert \geq R_c,
	\end{equation*}
	where $\nabla W(u)\cdot u$ denotes the inner product after identification with $\mathbb{R}^2$.
\end{asu}
Before stating the main result, let us recall that given a topological space $\mathfrak{X}$:
\begin{itemize}
	\item
		The \emph{Lusternik-Schnirelmann category} of $\mathfrak{X}$, 
		denoted by $\mathrm{cat}(\mathfrak{X})$, is the minimum number of 
		open contractible subsets required to cover $\mathfrak{X}$.
	\item
		The \emph{sum of all Betti numbers} of $\mathfrak{X}$ (with coefficients in $\mathbb{Z}_2$), is the evaluation 
		of the Poincaré polynomial of $\mathfrak{X}$ at $t = 1$, i.e.
		\[
			\mathcal{P}_1(\mathfrak{X}) = \sum_{k \in \mathbb{N}} \beta_k(\mathfrak{X}),
		\]
		where $\beta_k(\mathfrak{X})$ stands for the $k$-th Betti number of $\mathfrak{X}$.
\end{itemize}
\begin{theoremroman}[Main result]
	\label{theorem:main}
	Let $(M,g)$ be a closed Riemannian manifold
	of dimension $N \ge 3$,
	and $X\colon M \to TM $
	a smooth vector field such that
	its associated one-form 
	is co-exact	and~\ref{ass:Sigmadelta-retraction} holds.
	Let $W\colon \mathbb{C} \to [0,+\infty)$ be a $C^2$ potential with $W^{-1}(0)=\mathbb{S}^1$ that satisfies~\ref{ass:nondegeneracy}, \ref{ass:subcritical} and \ref{ass:coercivity}.
	Then, there exists $\phi^* > 0$,
	depending on $M, g$ and $X$,
	such that for every 
	$\phi \in (0,\phi^*)$
	there exists $\varepsilon^* = \varepsilon^*(\phi) >0$,
	depending also on $M, g, X$ and $W$,
	such that for every 
	$\varepsilon \in (0,\varepsilon^*)$
	the number of critical points of 
	$E_{\varepsilon,\phi}$ is at least $\mathrm{cat}(\Sigma)$.
	Moreover, if all the critical points of $E_{\varepsilon,\phi}$
	are non-degenerate, 
	the number of critical points of 
	$E_{\varepsilon,\phi}$ is at least 
	$\mathcal{P}_1(\Sigma)$.
\end{theoremroman}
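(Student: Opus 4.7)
The plan is to apply the classical Lusternik--Schnirelmann/Morse multiplicity machinery to $E_{\varepsilon,\phi}$ on the Hilbert manifold $\mathcal{X}_\phi$, combined with a photography/barycenter argument à la Benci--Cerami to estimate the topology of a suitable sublevel set in terms of $\Sigma$.

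\emph{Variational setup.} First I would verify that $E_{\varepsilon,\phi}$ is $C^{1}$ and bounded below on $\mathcal{X}_\phi$ and satisfies the Palais--Smale condition: the subcritical growth \ref{ass:subcritical} and the coercivity \ref{ass:coercivity} prevent both concentration and loss of mass, and Lemma~\ref{lemma:basic_functional} identifies critical points of $E_{\varepsilon,\phi}$ with solutions of \eqref{eq:Ginzburg-PDE} for some Lagrange multiplier $\lambda$. The remainder of the argument is then purely topological.

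\emph{Photography from $\Sigma$ into a sublevel set.} For each $x_0 \in \Sigma$, I would construct a test configuration $\Psi_{x_0} \in \mathcal{X}_\phi$ whose vorticity $J(\Psi_{x_0})$ concentrates on a small geodesic circle around $x_0$ lying in the hyperplane orthogonal to $X(x_0)$. Since $X$ is co-exact, integration by parts rewrites $\Phi_X$ as a pairing of $J(u)$ with a primitive $(N-2)$-form of $\ast X^{\flat}$; by Stokes, a vortex loop enclosing a small disk $D$ at $x_0$ carries momentum $\approx \pi r^{2}\,\norm{X(x_0)}$, which dictates the radius $r \approx \sqrt{\phi/\max\norm{X}}$ needed to meet the constraint. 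Standard Ginzburg--Landau core profiles then give energy $E_{\varepsilon,\phi}(\Psi_{x_0})$ approximately equal to the length $2\pi r$, uniformly in $x_0 \in \Sigma$. This furnishes a continuous map $\Psi\colon \Sigma \to \{E_{\varepsilon,\phi} \leq c_{\varepsilon}(\phi)\}$ for a sharp threshold $c_{\varepsilon}(\phi)$, provided $\phi$ and $\varepsilon$ are sufficiently small.

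\emph{Barycenter from the sublevel set back to $\Sigma_{\delta}$.} Conversely, I would build a continuous $\beta\colon \{E_{\varepsilon,\phi} \leq c_{\varepsilon}(\phi)\} \to \Sigma_{\delta}$ sending a low-energy configuration $u$ to a suitable ``centre of vorticity'', e.g.\ a weighted mean of the mass measure of $J(u)$ localized on maximal clusters. The nontrivial input is the codimension-2 isoperimetric-type inequality in the small flux regime announced in the abstract: it forces the vorticity of any $u$ with $E_{\varepsilon,\phi}(u) \leq c_{\varepsilon}(\phi)$ to concentrate in a ball around a point $p$ with $\norm{X(p)}$ nearly maximal, since otherwise the cost of transporting enough flux through regions where $\norm{X}$ is smaller would push the energy above the threshold. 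For $\phi,\varepsilon$ small this centre lies in $\Sigma_{\delta}$, and a homotopy-through-scales argument shows that $\beta \circ \Psi$ is homotopic to the inclusion $\Sigma \hookrightarrow \Sigma_{\delta}$.

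\emph{Conclusion and main obstacle.} Composing with the retraction $h_\Sigma(1,\cdot)$ from \ref{ass:Sigmadelta-retraction} factorizes $\mathrm{id}_\Sigma$ up to homotopy through $\{E_{\varepsilon,\phi} \leq c_{\varepsilon}(\phi)\}$, so both $\mathrm{cat}$ and $\mathcal{P}_{1}$ of this sublevel set dominate those of $\Sigma$. The Lusternik--Schnirelmann theorem on the Hilbert manifold $\mathcal{X}_\phi$ then yields at least $\mathrm{cat}(\Sigma)$ critical points of $E_{\varepsilon,\phi}$ below $c_{\varepsilon}(\phi)$, and under the non-degeneracy assumption the Morse inequalities upgrade this to $\mathcal{P}_{1}(\Sigma)$. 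I expect the main obstacle to be the construction and continuity of the barycenter $\beta$, which rests on sharp quantitative lower bounds for $E_{\varepsilon}$ with a prescribed small momentum; this is precisely the codimension-$2$ isoperimetric result in the small flux regime that the authors develop in parallel and that is morally the analog, for oriented vortex filaments, of the classical concentration result for minimizers of the Ginzburg--Landau energy.
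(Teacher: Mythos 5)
Your proposal follows essentially the same route as the paper: boundedness below plus Palais--Smale, a photography map built from vortex rings around small geodesic disks orthogonal to $X$ with radius fixed by the flux constraint, a barycenter map on the resulting sublevel set whose well-definedness rests on the codimension-$2$ isoperimetric concentration result near $\Sigma$, a homotopy to the identity through the exponential map, and then the Benci--Cerami photography theorem to get $\mathrm{cat}(\Sigma)$ critical points (and $\mathcal{P}_1(\Sigma)$ in the non-degenerate case). The only cosmetic differences are that your scaling formulas ($r\approx\sqrt{\phi}$, energy $\approx 2\pi r$) are the $N=3$ case of the general $r(p,\phi)\sim\phi^{1/(N-1)}$ and boundary-area asymptotics used in the paper, and that the paper takes the intrinsic barycenter of the energy density rather than of the vorticity measure.
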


\begin{remark}[Existence of minimizers]
	By using the direct method, one obtains the existence of one solution of \eqref{eq:Ginzburg-PDE} as the global minimizer of $E_{\varepsilon,\phi}$. Thus, we need to have $\mathrm{cat}(\Sigma) \geq 2$ in order for Theorem \ref{theorem:main} to be interesting.
\end{remark}

\begin{remark}[Further properties of the solutions]
	The solutions given by Theorem~\ref{theorem:main} are obtained in the proof as \textit{low energy} critical points of $E_{\varepsilon,\phi}$. The upper bound on their energies is given by the quantity $c(\phi)$ defined in \eqref{eq:def-sublevel} below. Moreover, our proof also shows that the energy densities of these are mostly concentrated near $\Sigma$, see Lemma \ref{lem:jacobian-concentration} below for a precise statement.
\end{remark}

\begin{remark}[On the dependence of the critical parameters on $W$]
	Notice that the critical momentum parameter $\phi^*$ in Theorem
	\ref{theorem:main} \textit{does not} depend on the choice of the potential
	$W$. However, the critical parameter $\varepsilon^*$ does
	(see Remark \ref{rem:dependence_eps} below).
\end{remark}

\begin{remark}[Existence of admissible vector fields for arbitrary manifolds]
	Given any closed smooth and oriented manifold $M$ with $\mathrm{dim}(M) \geq
	3$, it is always possible to find a vector field $X$ on $M$ which satisfies
	the assumptions of Theorem \ref{theorem:main}. In order to see this,
	consider $f$ an arbitrary smooth function on $M$ which is supported on a
	local chart on $M$. One can then use local coordinates to define the
	$(N-2)$-form
	\begin{equation*}
		Y:= f \,\mathrm{d}x_1 \wedge \ldots \wedge \mathrm{d}x_{N-2},
	\end{equation*}
	so that
	\begin{equation*}
		\mathrm{d}Y:= \frac{\partial f}{\partial x_{N-1}} \mathrm{d}x_1 \wedge \ldots \wedge \mathrm{d}x_{N-1}+\frac{\partial f}{\partial x_N} \mathrm{d}x_1 \wedge \ldots \wedge dx_{N-2} \wedge \mathrm{d}x_N.
	\end{equation*}
	Then, $X:= \star \mathrm{d}Y$ is co-exact, and moreover $\Sigma$ is the set of points in $M$ such that the function
	\begin{equation*}
		\left\lvert \frac{\partial f}{\partial x_{N-1}}\right\rvert^2+\left\lvert \frac{\partial f}{\partial x_{N}}\right\rvert^2
	\end{equation*}
	is maximal.
	Since $f$ is arbitrary, it can be chosen in a way such that $\Sigma$
	satisfies \ref{ass:Sigmadelta-retraction} and thus Theorem
	\ref{theorem:main} can be applied for $X$. Moreover, one can also consider
	suitable linear combinations of vectors fields as the one above in order to
	obtain new vector fields which satisfy the assumptions of Theorem
	\ref{theorem:main}.
	In particular, we deduce that given a positive integer $m$
	there exists $X_m$ which is co-exact and such that $\mathrm{cat}(\Sigma_m)
	\geq m$, where $\Sigma_m$ is defined as in \eqref{eq:set_Sigma} for $X_m$.
	Theorem \ref{theorem:main} then implies that the system
	\eqref{eq:Ginzburg-PDE} possesses at least $m$ variational solutions. 
\end{remark}

\begin{remark}[The case of manifolds with trivial first homology group]\label{remark:divergence_free}
	If $H_{1}(M)$, the first homology group of $M$ (with integer coefficients),
	is trivial, then then any smooth co-closed 1-form is co-exact.
    Recall also that $H_1(M)$ is trivial when $M$ is simply connected ($H_1(M)\cong \pi_1(M)^{\mathrm{ab}}
=0$).
    In this case one might find admissible vector fields considering $f=(f^1,\dots,f^n)$ supported in a local chart and setting
	\begin{equation*}
		\tilde{X}:= \sum_{j=1}^Nf^j\, \, \widehat{\mathrm{d}x_j},
	\end{equation*}
	where $\widehat{\mathrm{d}x_j}$ stands for the $(N-1)$-form
	\begin{equation*}
		\mathrm{d}x_1 \wedge \ldots \wedge \mathrm{d}x_{j-1}  \wedge \mathrm{d}x_{j+1} \wedge \ldots \wedge \mathrm{d}x_N
	\end{equation*}
	(with the obvious meaning when either $j=1$ or $j=N$). One then obtains
	\begin{equation*}
		\mathrm{d}\tilde{X}= \mathrm{div}(f)\mathrm{d}x_1 \wedge \ldots \wedge \mathrm{d}x_{N},
	\end{equation*}
	which vanishes provided that $f$ is divergence free.
	The vector field $X=\star \tilde{X}$ is then co-exact,
	and since $\Sigma$ is the set of
	points in which $\lvert f \rvert$ is maximal, one can clearly choose $f$ in
	a way such that \ref{ass:Sigmadelta-retraction} holds. One might also
	consider suitable linear combinations of vector fields as above in order to
	produce other admissible vector fields.
\end{remark}

\begin{remark}[On the case $\Sigma=M$]
	\label{rem:Sigma-equalto-M}
    If $X$ is a nonzero constant norm 1-form
    which satisfies the assumption of Theorem \ref{theorem:main}, then $\Sigma=M$ and one recovers a multiplicity result according to the topology of $M$ as in the works \cite{Benci1995,BenciCerami1994,benci2022-NLA,benci2024-corrigendum,corona2024multiplicity,andrade2024-JFA,andrade2024-multiplicityClusters}.
	However, such a 1-form might not exist for an arbitrary $M$ due to Poincaré-Hopf Theorem: a necessary condition is that the \textit{Euler characteristic} of $M$, denoted as $\chi(M)$, vanishes.
    Such a condition  on M might still not be sufficient, as we need our constant norm $1$--form to be co-exact. 
    To the best of the authors' knowledge, it is an open problem proving that a manifold supports a constant norm co-exact (or just co-closed) $1$--form knowing that it supports a constant norm $1$--form.
\end{remark}

\begin{remark}[On the non-degeneracy condition for solutions]
	In analogy with the Allen-Cahn setting of~\cite{benci2022-NLA,corona2024multiplicity}, one expects the critical points of $E_{\varepsilon,\phi}$ to be generically nondegenerate, the genericity being understood with respect to perturbations of the Riemannian metric $g$. We shall not address this issue in the present paper.
\end{remark}
The problem considered in the present paper as well as the methods used in the proof of Theorem \ref{theorem:main} are closely related to the $\Gamma$-convergence theory for the family of functionals $(E_\varepsilon)_{\varepsilon>0}$ as well as to certain isoperimetric-type problems in codimension 2.
In order to be more precise, let us begin by noticing that since $X$ is co-exact, i.e., there exists a $(N-2)$--form
$Y$ such that 
$\mathrm{d}Y = \star X \in \Omega^{N-1}(M)$,
the momentum $\Phi_X(u)$ of $u \in W^{1,2}(M,\mathbb{C})$ can be given in terms of the vorticity $J(u)$
(also called the \emph{Jacobian} of $u$; 
likewise, $j(u)$ is referred to as the \emph{pre-Jacobian}).
Indeed, as $M$ is a manifold without boundary,
by Stokes' Theorem we have:
\begin{multline}
	\label{eq:flux-Jacobian}
	\Phi_X(u) = 
	\frac{1}{2\pi} \int_M j(u)  \wedge \star X
	= \frac{1}{2\pi}\int_M j(u) \wedge \mathrm{d}Y \\
	= \frac{1}{\pi}\int_M J(u) \wedge Y 
	= \frac{1}{\pi} \int_M \sprod{J(u),\star Y}\mathrm{d}v_g.
\end{multline}
Following \cite{BOS2004}, we will also use the term \textit{flux} to refer to the quantity $\Phi_X(u)$.

At this point, it is worth noting that a nonzero momentum constraint forces the formation of singularities 
as the vortex core size parameter $\varepsilon$ tends to zero.
This stems from the fact $J(u)=0$ for every $u \in W^{1,2}(M,\mathbb{S}^1)$ (see~\cite{AlbertiBaldoOrlandiJEMS} and the references therein).
Thus, it readily follows by \eqref{eq:flux-Jacobian} that $\Phi_X(u)=0$ whenever $u \in W^{1,2}(M,\mathbb{S}^1)$.
As a consequence, when $\phi>0$ one cannot find a family
$(u_\varepsilon)_{\varepsilon>0}$ in $ \mathcal{X}_{\phi}$
whose unscaled energies remain bounded,
namely,
$
\sup_{\varepsilon>0} \lvert \log \varepsilon \rvert E_{\varepsilon,\phi}(u_\varepsilon)<+\infty
$.
Indeed, one readily checks that in such a case
we would reach a contradiction,
as after passing to the limit $\varepsilon \to 0$ we would obtain a map 
$u \in W^{1,2}(M,\mathbb{S}^1)$
such that $\Phi_X(u) = \phi \neq 0$.

The previous observation on the momentum is somewhat reminiscent of the \textit{topological obstructions} (such as those created by boundary data) usually found in the study of Ginzburg-Landau models departing from 
the seminal work \cite{BBH}, 
which focuses on the case of a bounded domain in the Euclidean plane.
Several extensions of \cite{BBH} (in particular to higher dimensional settings) were later considered, see~\cite{Alberti2001, AlbertiBaldoOrlandiJEMS, AlbertiBaldoOrlandi-Indiana2005, BethuelBrezisOrlandi2001, BOS2004, Jerrard1999, JerrardSoner2002, Sandier1998}.
In particular, in \cite{AlbertiBaldoOrlandi-Indiana2005} it is shown that the $\Gamma$-limit of the family of functionals $(E_{\varepsilon})_{\varepsilon>0}$ as $\varepsilon \to 0$ is a generalization of the area functional in codimension 2.
The results in \cite{AlbertiBaldoOrlandi-Indiana2005} rely on \cite{AlbertiBaldoOrlandiJEMS} and on the so-called ball construction established in independent works by Jerrard \cite{Jerrard1999} and Sandier \cite{Sandier1998}.
Following the work \cite{BOS2004} (which focuses on the Eucliden setting with constant vector field) one observes that when $(E_{\varepsilon})_{\varepsilon>0}$ is supplemented with the momentum constraint associated to \eqref{eq:def-flux} for $\phi>0$, one finds that the $\Gamma$-limit is the generalized area functional for ``submanifolds" (to be understood in the generalized sense of \textit{currents}) of codimension 2 which satisfy the geometric constraint (to be referred to as \textit{flux constraint} in the sequel) that their ``volume" weighted by $X$ must be equal to $\phi$.
In particular, as $\varepsilon \to 0$, the minimizers of $E_{\varepsilon,\phi}$ (more precisely, their vorticity/Jacobians) are close,
in a suitable sense that will be made precise 
later,
to the boundaries of oriented ``submanifolds" of $M$
that have flux $\phi$ with respect to $X$ 
and minimize the ``area" of their boundaries.
The latter objects are thus solutions to an \textit{isoperimetric}-type problem in codimension 2. 
Therefore, the momentum constraint associated to \eqref{eq:def-flux} can be thought of as a sort of higher-codimensional analog for our Ginzburg-Landau functional of the standard \textit{volume} (also called \textit{mass}) constraints often considered in Allen-Cahn settings.
Notice that the extension of the notion of \textit{volume constraint} from the standard isoperimetric problem to higher codimension is not straightforward nor unique, as in the latter case one does not have a natural notion of ``enclosed region" (that is, there are many regions of codimension 2 which share the same given boundary).
The notion of volume weighted by a $1$-form that we consider here was introduced by Salavessa \cite{Salavessa2010} and corresponds to the alternative $(2)$ in the subsequent work by Morgan and Salavessa \cite{MorganSalavessa} provided that $X$ is assumed to be \textit{co-exact}. When $X$ is not co-exact, the study of the corresponding isoperimetric problem becomes problematic and we do not even know if it is well-posed (see Remark \ref{rem:coexact} below for more details). This is the reason for which $X$ is assumed to be co-exact.
In analogy with the codimensional 1 case, one expects that when $\phi$ is
sufficiently small, the geometric problem on $M$ should converge to the
Euclidean one and thus that  our ``isoperimetric submanifolds'' of codimension
$2$ should be ``geodesic disks'' with normal vector close to $X$. Furthermore,
such small discs should be centered around a point in which the modulus of $X$
is maximal so that the perimeter is minimized. In this direction, a
contribution of this paper, possibly of independent interest, is to provide a
proof of the fact that
\textit{almost minimizers} of our isoperimetric problem have \textit{almost}
all of their volume concentrated around a geodesic disk close to a point where
$X$ maximizes its norm.

Let us also point out that, in contrast with isoperimetric problems in
codimension 1, their higher-dimensional analogues seem to have been much less
explored so far.
Moreover, to our knowledge, only the works \cite{MorganSalavessa,Salavessa2010}
consider the notion of volume weighted by a given vector field $X$ as we do
here. 
The classical paper by Almgren \cite{Almgren1986} and more recent works (e.g.,
Mazzeo, Pacard and Zolotareva \cite{Mazzeo2017}) take a different notion of
``enclosed volume" of higher codimension (namely, the minimal volume of all the
submanifolds which share the same boundary).
Flux-constrained Ginzburg-Landau-type problems and their link with higher-dimensional isoperimetric problems also seem to have remained quite unexplored so far: to our knowledge, besides the present paper, it appears only on \cite{BOS2004,chiron2004} as well as on different but related work by Román, Sandier and Serfaty \cite{Roman2019,RomanSandierSerfaty2023}.

We conclude this introduction with some additional bibliographical comments.
Multiplicity results similar in spirit to Theorem~\ref{theorem:main}
have been proved for several Allen–Cahn-type models with a volume constraint,
see~\cite{benci2020-calcvar,benci2022-NLA,benci2024-corrigendum,corona2024multiplicity,andrade2024-JFA,andrade2024-multiplicityClusters}.
As for Ginzburg-Landau-type models, multiplicity results in the planar setting where provided by Almeida and Bethuel \cite{AlmeidaBethuel} and later improved by Feng and Zhou \cite{ZhouZhou}.
In the work \cite{JerrardSternberg2009}, Jerrard and Sternberg showed that, in some situations, $\Gamma$-convergence can be used in order to prove the existence of (possibly unstable) critical points of the approximating functionals. In particular, they applied their result to the 3D Ginzburg-Landau functional with and without magnetic field.
More recently, existence results for Ginzburg-Landau functionals have been established in various settings using different techniques: the reader is referred to Colinet, Jerrard and Sternberg \cite{ColinetJerrardSternberg} and works by De Philippis, Pigati and Stern \cite{dephilippis-pigati,PigatiStern,Stern}.
We note that, in contrast with the present paper, the references we just cited deal with Ginzburg-Landau functionals \emph{without} a momentum/flux constraint and their approaches and methods are rather different than ours.

Let us now consider the time-dependent Gross-Pitaevskii equation in our manifold $M$, which reads
\begin{equation}\label{eq:GrossPitaevskii}\tag{$t$-GP$_\varepsilon$}
	i \, \lvert \log \varepsilon \rvert \Psi_t + \Delta \Psi = \frac{1}{\varepsilon^2}\nabla W(\Psi), \quad \Psi: M \times \mathbb{R} \to \mathbb{C}.
\end{equation}
As for the standard nonlinear Schrödinger equation on $M$ (see Mukherjee \cite{Mukherjee} and Taylor \cite{Taylor}) solutions of \eqref{eq:Ginzburg-PDE} can be interpreted as traveling wave solutions of \eqref{eq:GrossPitaevskii} provided that $X$ is a \textit{Killing vector field}.
Indeed, in such case $X$ generates a $1$-parameter group of isometries, denoted as $(\mathfrak{g}(t))_{t \in \mathbb{R}}$. Thus, if $u$ solves \eqref{eq:Ginzburg-PDE} for certain $\varepsilon$ and $\lambda$, then
\begin{equation*}
	\Psi: M \times \mathbb{R} \to \mathbb{C},
	\quad \Psi(x,t):= u(\mathfrak{g}(\lambda t)x) 
\end{equation*}
is a traveling solution to \eqref{eq:GrossPitaevskii} moving at speed $\lambda$ and with profile $u$.
However, when $X$ is Killing the functional $E_{\varepsilon,\phi}$ and the space $\mathcal{X}_{\phi}$ are invariant by the action of the group of isometries generated by $X$, so that one can trivially produce a continuum of critical points for $E_{\varepsilon,\phi}$ by applying the group action to the minimizer, unless the latter is invariant by the action.
Thus, the main result of this paper is not well suited for a situation in which $X$ is Killing.
In such a framework, one should instead aim at proving the existence of multiple \textit{critical orbits} for $E_{\varepsilon,\phi}$ (that is, multiple orbits of critical points, the orbits being understood as those generated by the action of the isometry group on $\mathcal{X}_\phi$), which in turn would lead to the existence of multiple \textit{geometrically distinct} traveling waves for \eqref{eq:GrossPitaevskii}.
It is tempting to believe the previous fact might be proven by combining the methods of this paper with those of \textit{equivariant critical point theory}, that is, critical point theory adapted to variational problems which are invariant by the action of a group (cf. Bartsch~\cite{BartschBook}).

\subsection*{Organization of the paper.}
After this introduction, Section~\ref{sec:notation} introduces the main notation and preliminary results,
recalling also standard notions from Geometric Measure Theory.
Section~\ref{sec:outline-of-proof} gives a detailed outline of the proof of
Theorem \ref{theorem:main}, which is based on the  \emph{photography method}
described in Theorem~\ref{theorem:abstract-photography}.
In Section~\ref{sec:PS} we prove the Palais-Smale condition for
$E_{\varepsilon,\phi}$ as well as other functional properties, 
while in Section~\ref{sec:gamma-convergence} we provide some results related to
$\Gamma$-convergence for the flux-constrained energy functional in closed
Riemannian manifolds.
Sections~\ref{sec:photography} and~\ref{sec:barycenter}
are devoted to the explicit construction of 
the two maps required by the photography method,
namely the photography and the barycenter, respectively.
In Section~\ref{sec:finalproof}, 
we prove that the composition 
of these two maps is homotopic to the identity and complete the proof of
Theorem~\ref{theorem:main}.

\subsection*{Acknowledgments}
This work was partially supported Italian National Institute of High Mathematics (INdAM). 
This work was supported by the Gruppo Nazionale per l’Analisi Matematica, la Probabilità e le loro Applicazioni (GNAMPA) – INdAM.\\
D.Corona was supported by INdAM as ``titolare di una borsa per l’estero dell’Istituto Nazionale di Alta Matematica'' and FAPESP \#2022/16097-2.\\
S. Nardulli was supported by
FAPESP Auxílio Jovem Pesquisador \#2021/05256-0,
CNPq Bolsa de Produtividade em Pesquisa 1D \#12327/2021-8, 23/08246-0,
``Geometric Variational Problems in
Smooth and Nonsmooth Metric Spaces'' \#441922/2023-6.\\
R. Oliver-Bonafoux was supported by Program Horizon Europe Marie
Sklodowska-Curie Post-Doctoral Fellowship (HORIZON-MSCA-2023-PF-01). Grant agreement:	101149877. Project acronym: NFROGS.\\
We thank Giacomo Canevari for useful discussions related to this work.

\section{Notation and Preliminaries}
\label{sec:notation}

\subsection{General notions of Geometric Measure Theory}

We first introduce some basic notation and conventions from Geometric Measure Theory,
recalling some definitions and properties relative to currents on a Riemannian manifold $M$.
Classical references on the subject are the books by Federer \cite{Federer1996} and Simon \cite{Simon1983}.

For any $k\in\{0,\dots,N\}$, let $\Omega^k(M)$ be the space of smooth 
$k$--forms on $M$,
and let $\star\colon \Omega^k(M)\to\Omega^{N-k}(M)$
be the Hodge star operator associated with the metric $g$.
We will implicitly identify $k$--vectors with their associated $k$--forms, and vice versa.
We denote by $\mathcal{D}_k(M)$ the space of $k$--currents on $M$,
and by $\mathcal{B}_k(M)$
the space of $k$--currents that are boundaries, hence
\[
	\mathcal{B}_k(M)\coloneqq \big\{\partial T:\ T\in \mathcal{D}_{k+1}(M)\big\}\subset \mathcal{D}_k(M).
\]
We denote by $\mathcal{I}_k(M)\subset \mathcal{D}_k(M)$ the space of integral
$k$--currents, and by
\[
	\mathcal{IB}_k(M)\coloneqq \big\{\partial T:\ T\in \mathcal{I}_{k+1}(M)\big\}
\]
the space of $k$--dimensional integral boundaries on $M$.
Given an open set $U\subset M$ and a $k$--dimensional integral current $T$ on $M$,
we denote by $T|_U$ the restriction of $T$ to $U$.
Then $T|_U$ is again an $k$--dimensional integral current on $M$.

For any open set $U\subset M$, the mass of $T$ in $U$ is defined by
\[
	\mathbf{M}_U(T)\coloneqq
	\sup\big\{\, T(\omega):
		\ \omega\in \Omega^k(M),\ 
		\|\omega\|_\infty\le 1,\ 
	\mathrm{spt}(\omega)\subset U \big\},
\]
and we denote by $\|T\|$ the mass measure of $T$,
so that $\mathbf{M}_U(T)=\|T\|(U)$.
We simply write $\mathbf{M}$ instead of $\mathbf{M}_M$,
so $\mathbf{M}(T)=\|T\|(M)$.
For any $T \in \mathcal{D}_k(M)$, its flat norm in $U$ is
\[
	\mathbf{F}_U(T)\coloneqq
	\inf\big\{\, \mathbf{M}_U(R)+\mathbf{M}_U(\tilde{R}):
		\ T=R+\partial \tilde{R},\
		R\in\mathcal{D}_k(M),\
	\tilde{R}\in\mathcal{D}_{k+1}(M)\big\}.
\]
The following compactness result will be used at several instances:
if $(T_n)_{n \in \mathbb{N}}\subset \mathcal{I}_{N-1}(M)$ and
$\sup_{n \in \mathbb{N}}\big(\mathbf{M}(T_n)+\mathbf{M}(\partial T_n)\big) <+\infty$, then there exists $T \in \mathcal{I}_{N-1}(M)$ such that $\mathbf{F}(T_n-T) \to 0$ as $n \to \infty$.

For any Lipschitz map $\varphi\colon U \subset M \to \mathbb{R}^N$ and
$T \in \mathcal{D}_k(U)$,
the pushforward of $T$ by $\varphi$, denoted $\varphi_{\#}T$, is defined as
\[
	\varphi_{\#}T(\alpha)\coloneqq T(\varphi^{\#}\alpha),
	\qquad \forall\, \alpha \in \Omega^{k}(\mathbb{R}^N),
\]
where $\varphi^{\#}\alpha \in \Omega^k(U)$ denotes the pullback of the differential form $\alpha$.
One readily verifies that $\partial(\varphi_{\#}T)=\varphi_{\#}(\partial T)$.
Moreover, if $T$ is an integral current on $U$,
then $\varphi_{\#}T$ is an integral current on $\mathbb{R}^N$.

\subsection{Our Setting}

As stated in the introduction, 
throughout this paper, $(M,g)$ 
denotes a closed (i.e., compact and without boundary) 
Riemannian manifold of dimension $N \ge 3$.  
By compactness, we can fix a constant $r_0$ such that  
\begin{equation}
	\label{eq:def-r0}
	0 < r_0 < \frac{1}{2}\, \mathrm{inj}(M),
\end{equation}
where $\mathrm{inj}(M)$ is the injectivity radius of $M$,
and such that every geodesic ball of radius $r < r_0$
is strongly convex.

We assume the existence of a fixed, nonzero vector field $X\colon M \to TM$ whose associated one-form
is co-exact, 
so that there exists $Y \in \Omega^{N-2}(M)$
such that 
\[
	\mathrm{d}Y = \star X \in \Omega^{N-1}(M).
\]
As a consequence, $X$ is also divergence-free, 
namely
\[
	\mathrm{div} X(x) = 0, \qquad \forall x \in M.
\]
Since $X$ is not the zero vector field, we can assume,
without loss of generality, that
\begin{equation}
	\label{eq:maxXone}
	\max_{x \in M} \|X(x)\|  = 1.
\end{equation}

As $X$ is co-exact, Stokes' Theorem for currents implies that
\begin{equation}\label{eq:flux_coexact}
	T(\star X)=\partial T(Y), \quad \mbox{ for all } T \in \mathcal{I}_{N-1}(M),
\end{equation}
which means that the flux of a current depends only on its boundary.
Given $\phi>0$, we set
\[
	\mathcal{I}^\phi_{N-1}(M) \coloneqq \Big\{T \in \mathcal{I}_{N-1}(M): T(\star X ) = \phi \Big\}.
\]
and
\[
	\mathcal{IB}_{N-2}^\phi(M)
	\coloneqq \{ S \in \mathcal{B}_{N-2}(M): S(Y)=\phi\}.
\]
Notice that \eqref{eq:flux_coexact} implies that if $T \in \mathcal{I}_{N-1}^\phi(M)$ then $\partial T \in \mathcal{IB}_{N-2}^\phi(M)$.

Finally, we recall that the set $\Sigma \subset M$ where $\|X\|$ attains its maximum (see~\eqref{eq:set_Sigma}) is a strong deformation retract of some tubular neighborhood $\Sigma_\delta \subset M$,
as required by
Assumption~\ref{ass:Sigmadelta-retraction}.

We consistently use superscripts
to denote the components of complex numbers or vectors, 
while subscripts indicate elements of a sequence.  
Hence, we express any function $u \in W^{1,2}(M,\mathbb{C})$ as
$ u(x) = u^1(x) + i u^2(x) $,
and its gradient belongs to the complexified tangent space 
$ T^*M \otimes \mathbb{C} $, so that
\[
	\nabla u(x) = \nabla u^1(x) + i \nabla u^2(x).
\]
For any function $u \in W^{1,2}(M,\mathbb{C})$, 
its \emph{pre-Jacobian}, denoted by $ j(u): M \to TM $, 
is the vector field defined as in~\eqref{eq:def-prejacobian},  
which we recall here for convenience:
\[
	j(u) \coloneqq 
	\mathrm{Im}\big(u(x) \cdot \overline{\nabla u(x)}\big)
	=
	u^1(x)\nabla u^2(x) - u^2(x)\nabla u^1(x).
\]
We also recall  the definition (rescaled) \emph{flux functional} with respect to $ X $
(see~\eqref{eq:def-flux}), 
denoted by $ \Phi_X\colon W^{1,2}(M,\mathbb{C}) \to \mathbb{R} $, as
\[
	\Phi_X(u) = \int_{M} g\big(j(u),X\big)\,\mathrm{d}v_g.
\]

The $\Gamma$-convergence of Ginzburg-Landau functionals is formulated in terms of the \textit{Jacobian} of maps from $M$ to $\mathbb{C}$, which, as stated in \eqref{eq:def-Jacobian}, are defined as
\begin{equation*}
	J(u)\coloneqq \frac{1}{2} \,\mathrm{d} j(u) \in \Omega^2(M), \quad \mbox{ for all } u \in W^{1,2}(M,\mathbb{C}),
\end{equation*}
so that, in any local chart of $ M $, 
we have
\begin{equation}
	\label{eq:JacobianFormula}
	J(u)  = \sum_{1 \leq i < j \leq N}
	\left(
		\frac{\partial u^1}{\partial x^i}
		\frac{\partial u^2}{\partial x^j}
		-
		\frac{\partial u^2}{\partial x^i}
		\frac{\partial u^1}{\partial x^j}
	\right)
	\mathrm{d}x^i \wedge \mathrm{d}x^j.
\end{equation}
Through the Hodge star operator $\star$ given by the metric $g$, the Jacobian of $u$
can be identified with the $(N-2)$--form
$\star J(u)$, and then with a $(N-2)$--dimensional vector field on $M$.
As a consequence, through the integration operator,
$\star J(u)$ identifies an $(N-2)$--dimensional current on $M$.
More precisely, once $\star J(u)$ is seen as an 
$(N-2)$--dimensional vector field,
for any $\omega \in \Omega^{N-2}(M)$
we have
\[
	\star J(u) (\omega) = \int_M 
	\sprod{\star J(u),\omega}_g\mathrm{d}x.
\]
We now introduce the isoperimetric problem in codimension two
associated with the vector field $X$:
\begin{equation}\label{eq:inf_JM}
	J_M(\phi, X) \coloneqq
	\inf\left\{
		\norm{\partial T}(M) : T \in \mathcal{I}^\phi_{N-1}
	\right\}.
\end{equation}
In other words, the problem consists in minimizing the mass of the boundary
among all integral $(N-1)$--dimensional currents with flux $\phi$
with respect to $X$.
For small values of $\phi$,
$J_M(\phi, X)$ behaves as in the Euclidean case
for a constant vector field (see Proposition~\ref{prop:iso_expansion}); hence,
\begin{equation}
	\label{eq:J_M-asymptotic}
	J_M(\phi, X)
	= \gamma_{N-1}\,\phi^{\frac{N-2}{N-1}}
	+ o\big(\phi^{\frac{N-2}{N-1}}\big),
\end{equation}
where $\gamma_{N-1} > 0$ depends only on the dimension
and coincides with the constant appearing in the isoperimetric inequality
for $(N-2)$--dimensional integral boundaries in $\mathbb{R}^N$
(cf.~Almgren~\cite{Almgren1986}).

\begin{remark}[On $J_M(\phi,X)$ and the co-exactness assumption on $X$]\label{rem:coexact}
	It is not difficult to see that the infimum in \eqref{eq:inf_JM} is attained for all $\phi>0$. This follows from the fact that the flux depends only on the boundary of the current 
	(by~\eqref{eq:flux_coexact}) since $X$ is co-exact. Indeed, given $(T_n)_{n \in \mathbb{N}}$ a minimizing sequence for \eqref{eq:inf_JM}, one can apply the classical compactness result for integral currents to $(\partial T_n)_{n \in \mathbb{N}}$ along with \eqref{eq:flux_coexact} to get the existence of a minimum for $J_M(\phi,X)$. The previous argument cannot be implemented when $\star X$ is not exact, as then the flux does not depend only on the boundary of currents. As a consequence, in such case one needs to have compactness for the minimizing sequence itself (rather than the sequence of boundaries). However, the existence of a minimizing sequence with uniformly bounded mass (which would imply compactness) is not obvious to us (the difficulty seems to lie in the fact that the flux constraint must be respected) and thus we do not know whether $J_M(\phi,X)$ is attained when $\star X$ is no longer exact. Similarly, the proof of the asymptotic identity \eqref{eq:J_M-asymptotic} also lies on the fact that $\star X$ is exact. To sum up, we do not see at this point how our analysis can be carried out for the case in which $X$ is merely assumed to be co-closed.
\end{remark}

\section{Outline of the proof}
\label{sec:outline-of-proof}

The proof of Theorem~\ref{theorem:main}
is based on the so-called \emph{photography method},
a technique in critical point theory introduced by
Benci and Cerami~\cite{BenciCerami1994}
(see also~\cite[Section IV.7]{Benci1995}). 

We recall that given a $C^1$ functional $E$ on a smooth Banach manifold $\mathfrak{M}$ and a sequence $(x_n)_{n \in \mathbb{N}} \subset \mathfrak{M}$, the latter is said to be a Palais-Smale sequence for $E$ when $(E(x_n))_{n \in \mathbb{N}}$ is bounded and $(\norm{\mathrm{d} E(x_n)}_{(T_{x_n}\mathfrak{M})^*})_{n \in \mathbb{N}}$ tends to zero as $n$ tends to $0$.
In this setting, the functional  is said to satisfy the Palais-Smale condition when every Palais-Smale sequence possesses a strongly convergent subsequence.
\begin{theorem}[Photography method, cf.~\cite{Benci1995,BenciCerami1994}]
	\label{theorem:abstract-photography}
	Let $\mathfrak{X}$ be a topological space,
	$\mathfrak{M}$ be a $C^2$-Hilbert manifold,
	and let $E\colon \mathfrak{M}\to\mathbb{R}$ be a $C^1$-functional.
	For any $c\in\mathbb{R}$, define the sublevel set
	\[
		E^c\coloneqq\{x\in \mathfrak{M} : E(x)\leq c\}.
	\]
	Assume the following conditions hold:
	\begin{enumerate}
		\item
			$\inf_{x \in \mathfrak{M}}E(x)>-\infty$;
		\item
			$E$ satisfies the Palais--Smale (PS) condition;
		\item
			There exist two continuous maps
			$f \colon \mathfrak{X}\to E^c$ and
			$\beta\colon E^c\to \mathfrak{X}$
			such that $\beta\circ f$ is homotopic
			to the identity map of $\mathfrak{X}$.
	\end{enumerate} 
	Then, the number of critical points of $E$ in $E^c$ 
	is at least $\mathrm{cat}(\mathfrak{X})$.
	Moreover, if $\mathfrak{M}$ is contractible and $\mathrm{cat}(\mathfrak{X})>1$,
	then there exists at least one additional
	critical point of $E$ outside $E^c$.
	Furthermore, if the number of critical points is finite,
	there exists $c_0\in(c,\infty)$ such that 
	$E^{c}$ contains exactly $\mathcal{P}_1(\mathfrak{X})$ critical points,
	and if $X$ is contractible, then   
	$E^{c_0}\setminus E^{c}$
	contains $\mathcal{P}_1(\mathfrak{X})-1$ critical points,
	counted with multiplicity.
\end{theorem}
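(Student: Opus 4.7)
The plan is to deduce the theorem from classical Lusternik--Schnirelmann theory on the Hilbert manifold $\mathfrak{M}$, by first upgrading the existence of the photography maps into a topological lower bound on the sublevel set $E^c$, and then invoking the Palais--Smale deformation machinery on $(\mathfrak{M},E)$.

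First, I would establish the purely topological inequality $\mathrm{cat}(E^c)\ge \mathrm{cat}(\mathfrak{X})$. Let $A_1,\dots,A_k$ be an open cover of $E^c$ by sets contractible in $E^c$, with $k=\mathrm{cat}(E^c)$, and let $h_i\colon A_i\times[0,1]\to E^c$ be associated contracting homotopies. Setting $U_i\coloneqq f^{-1}(A_i)$ produces an open cover of $\mathfrak{X}$, and the formula
\[
H_i(u,t) \coloneqq \beta\big(h_i(f(u),t)\big),\qquad (u,t)\in U_i\times[0,1],
\]
provides a homotopy in $\mathfrak{X}$ from $\beta\circ f|_{U_i}$ to a constant map. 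Combined with the hypothesis $\beta\circ f\simeq \mathrm{id}_{\mathfrak{X}}$, this shows that the inclusion $U_i\hookrightarrow\mathfrak{X}$ is null-homotopic, so each $U_i$ is contractible in $\mathfrak{X}$ and $\mathrm{cat}(\mathfrak{X})\le k$. The same argument passed to singular cohomology (with $\mathbb{Z}_2$ coefficients) shows that $f^*\colon H^*(E^c)\to H^*(\mathfrak{X})$ is surjective, since $\beta^*\circ f^*=\mathrm{id}$, and hence $\mathcal{P}_1(E^c)\ge \mathcal{P}_1(\mathfrak{X})$.

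Next, I would invoke classical Lusternik--Schnirelmann theory on $\mathfrak{M}$. Since $E$ is $C^1$, bounded below, and Palais--Smale, a standard pseudo-gradient deformation lemma is available. Defining the minimax values
\[
c_j \coloneqq \inf\big\{\,c'\in\mathbb{R}\,:\,\mathrm{cat}(E^{c'})\ge j\,\big\}, \qquad j=1,\dots,\mathrm{cat}(\mathfrak{X}),
\]
the preceding step gives $c_j\le c$ for all such $j$. The standard LS argument then shows each $c_j$ is a critical value, and that whenever $c_j=\cdots=c_{j+m}$ the critical set at this common level has category at least $m+1$ in $\mathfrak{M}$, hence is infinite when $m\ge 1$; in either case one produces at least $\mathrm{cat}(\mathfrak{X})$ critical points in $E^c$. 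The Betti-number refinement follows along the same lines, replacing category by the Morse-theoretic classes associated to $\mathcal{P}_1$ and invoking Morse-type inequalities at each critical level.

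For the additional critical point outside $E^c$ in the contractible case, I would argue by contradiction: if $E$ had no critical point with value $>c$, then the Palais--Smale deformation applied to the range $(c,+\infty)$ would deformation retract $\mathfrak{M}$ onto $E^c$, forcing $\mathrm{cat}(E^c)=\mathrm{cat}(\mathfrak{M})=1$ and contradicting $\mathrm{cat}(E^c)\ge \mathrm{cat}(\mathfrak{X})>1$. An analogous Morse-theoretic argument yields the claimed $\mathcal{P}_1(\mathfrak{X})-1$ additional critical points in some strip $E^{c_0}\setminus E^c$ when the total number of critical points is finite. The hard part is not the topology but the careful execution of the deformation lemma across possibly degenerate critical levels, especially in the unbounded regime $E>c$; this is where the Palais--Smale assumption and the $C^2$ Hilbert structure enter essentially, to manufacture locally Lipschitz pseudo-gradients and to rule out accumulation of critical values.
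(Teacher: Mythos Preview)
The paper does not prove this theorem: it is stated as a known result and attributed to Benci and Benci--Cerami (the ``cf.'' citation), with no argument supplied. Your sketch follows the standard route of those references---transfer the topology of $\mathfrak{X}$ into $E^c$ via the pair $(f,\beta)$ to obtain $\mathrm{cat}(E^c)\ge\mathrm{cat}(\mathfrak{X})$ (and the corresponding Betti-number inequality through surjectivity of $f^*$), then run classical Lusternik--Schnirelmann and Morse theory on $\mathfrak{M}$ using the Palais--Smale condition and a pseudo-gradient deformation lemma---so there is nothing to compare against beyond noting that your outline matches the cited literature.
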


The name ``photography'' reflects that
$f(\mathfrak{X})$ may be regarded as faithful picture of $\mathfrak{X}$
inside $\mathfrak{M}$.
Indeed,
the retraction
$\beta\colon E^c\to Z$ with $\beta\circ f\simeq \mathrm{id}_\mathfrak{X}$,
one can recognize $\mathfrak{X}$ inside the sublevel $E^c$;
in particular, homotopy-invariant quantities of $\mathfrak{X}$
(such as $\mathrm{cat}(\mathfrak{X})$, cup-length, the Betti numbers)
transfer to $E^c$.
This passage of topology is precisely what underlies the lower bounds on the number of critical points.

Beyond the previously mentioned applications of the photography method 
in the study of the Allen-Cahn functional
(see~\cite{benci2020-calcvar,benci2022-NLA,benci2024-corrigendum,corona2024multiplicity,andrade2024-JFA,andrade2024-multiplicityClusters}),
this technique has also been employed in various other contexts.  
For instance, Cingolani and Lazzo~\cite{MR1646619,MR1734531} 
used it to analyze standing waves of a nonlinear Schr\"odinger equation.  
More recently, Petean~\cite{MR3912791}
applied it to establish a multiplicity result 
for the Yamabe equation,
while Alarcón, Petean and Rey~\cite{MR4761862} 
investigated its role in the study of conformal metrics
with constant $Q$-curvature.

\medskip
As our functional 
$E_{\varepsilon,\phi}$ is
clearly bounded from below,
and the Palais--Smale condition for it
can be readily checked (see Section~\ref{sec:PS}),
to apply Theorem~\ref{theorem:abstract-photography}
in our setting we just need to define two maps: 
a \emph{photography map} 
$f_{\varepsilon,\phi}\colon \Sigma \to \mathcal{X}_{\phi}$, 
which captures the topology of $\Sigma$ within the space of admissible functions, 
and a \emph{barycenter map}
$\beta\colon E_{\varepsilon,\phi}^{c} \to \Sigma$, 
which allows us to construct a homotopy to the identity. 
The interplay between these two maps is the key ingredient of the proof.

Thanks to the $\Gamma$--convergence results
for the flux--constrained Ginzburg--Landau functional,
one expects that, for $\phi$
and $\varepsilon$ sufficiently small, the minimizers
of $E_{\varepsilon,\phi}$
are close to tiny vortex rings around 
the boundaries of $(N-2)$--dimensional geodesic disks
orthogonal to $X$.
Moreover, these disks are centered at points in $\Sigma$,
where $\|X\|$ attains its maximum,
so that they have the smallest possible radius
(and hence the smallest boundary area)
compatible with the given flux constraint.
Therefore, we construct the photography map
$f_{\varepsilon,\phi}\colon \Sigma \to \mathcal{X}_\phi$
as an element of the recovery sequence (as $\varepsilon \to 0$)
associated with the boundaries of
\[
	D_X(p,r(p,\phi)) = 
	\big\{\exp_p v \in M:
		v \in T_pM,\, g(v,X(p)) = 0,\,
	\|v\| \le r(p,\phi)\big\},
\]
where $r(p,\phi)$ is chosen so that 
the flux of $X$ through the disk equals $\phi$.
Thanks to $\Gamma$--convergence 
and the estimate of the boundary area
of small disks,
we obtain a function $c:(0,+\infty) \to \mathbb{R}$
such that 
\begin{equation}
	\label{eq:informal-photo-bound}
	\lim_{\varepsilon \to 0}
	E_{\varepsilon,\phi}\big(f_{\varepsilon,\phi}(p)\big)
	= \|\partial D_X(p,r(p,\phi))\|(M)
	\le c(\phi),
	\qquad \forall p \in \Sigma.
\end{equation}
In other words, 
we obtain an estimate for the smallest sublevel
that contains the image of the photography map,
since
\[
	f_{\varepsilon,\phi}(\Sigma) \subset E_{\varepsilon,\phi}^{c(\phi)}.
\]
Most importantly, this upper bound coincides, 
at leading order, with the
``isoperimetric'' functional in codimension~$2$,
namely
\[
	c(\phi) = J_M(\phi,X) + O(\phi),
	\qquad \mbox{as } \phi \to 0.
\]

Based on the above estimate,
we can define the
barycenter map on this sublevel, provided
the flux parameter $\phi$ is sufficiently small.
Indeed, from the analysis of the isoperimetric problem
in higher codimension, we know that its
\textit{almost minimizing} sequences are given by currents
whose mass is almost entirely concentrated in a single ball 
centered near $\Sigma$.
Since this result is of independent interest,
we postpone its statement and
proof to Appendix~\ref{app:generalized-compactness}.
Therefore, 
using again the $\Gamma$--convergence
(in particular, the equi-coerciveness and liminf properties),
we can transfer this concentration property 
to the energy densities of functions in $E_{\varepsilon,\phi}^{c(\phi)}$.
Then, by means of an intrinsic barycenter map,
we construct a continuous map 
$\beta^*\colon E_{\varepsilon,\phi}^{c(\phi)} \to M$
such that
\[
	\beta^*\big(E_{\varepsilon,\phi}^{c(\phi)}\big) \subset \Sigma_{\delta},
\]
and hence a map $\beta$ with values in $\Sigma$ by composing $\beta^*$
with the retraction $h(1,\cdot)\colon\Sigma_\delta \to \Sigma$
given by~\ref{ass:Sigmadelta-retraction}.
Finally, the homotopy to the identity of $\beta \circ f_{\varepsilon,\phi}$
is obtained through a careful use of the exponential map.

To the best of our knowledge,
this is the first multiplicity result for the
Ginzburg--Landau functional in this spirit,
as previous results are devoted to
Allen--Cahn functionals
(see, e.g., \cite{benci2022-NLA,benci2024-corrigendum,corona2024multiplicity,andrade2024-JFA,andrade2024-multiplicityClusters}).
Beyond the technical difficulties stemming from the higher codimension, 
the key difference here is the presence of a distinguished set $\Sigma\subset M$.
Indeed, for the Allen--Cahn functional, the ``natural'' constraint is the $L^1$ mass of the functions, and the natural photography map is taken as an element of the recovery sequence for a geodesic ball of full dimension and prescribed volume.
Since the volume is not weighted by any scalar field,
it is the same at every point of the manifold;
accordingly, the natural photography map is defined on
the whole $M$.
By contrast, in the flux--constrained Ginzburg--Landau
setting the relevant cost depends on the vector field $X$:
for small flux $\phi$, the optimal
vortex rings concentrate near points where $\|X\|$ is maximal,
i.e., on $\Sigma$.
Hence the photography map is naturally defined on $\Sigma$,
not on all of $M$.
Moreover, the situation $\Sigma=M$ is exceptional:
it would require the existence of a nowhere-vanishing vector field $X$ of constant norm whose assosiated one-form is co-exact,
a condition typically precluded
by topological obstructions
(cf. Remark~\ref{rem:Sigma-equalto-M}).

\section{Functional setting}\label{sec:PS}
The results provided in this section show that $E_{\varepsilon,\phi}$ is a $C^1$ functional which satisfies the Palais-Smale condition and that its critical points give rise to classical solutions of \eqref{eq:Ginzburg-PDE}. Such results are standard, but, to the best of our knowledge, their proofs cannot be found in the previous literature. Therefore, we include them here for completeness.

Let $p$ be the subcritical exponent given by \ref{ass:subcritical} and $C_{p}$ be the positive quantity associated to the compact embedding $W^{1,2}(M,\mathbb{C}) \hookrightarrow L^{p}(M,\mathbb{C})$. In particular, we have
\begin{equation}\label{eq:embedding}
	\norm{u}_{L^{p}(M,\mathbb{C})} \leq C_p\norm{u}_{W^{1,2}(M,\mathbb{C})} \quad \mbox{ for all } u \in W^{1,2}(M,\mathbb{C}).
\end{equation}
\begin{lemma}[Basic inequalities for the potential]\label{lemma:basic_potential}
	For all $u,v$ and $w$ in $W^{1,2}(M,\mathbb{C})$ we have
	\begin{equation}\label{eq:integral_potential_1}
		\int_M \lvert D^2W(u)w \cdot v \rvert \mathrm{d}v_g \leq \left(C_{\mathrm{sc,1}}+C_{\mathrm{sc,2}}\norm{u}_{L^{p}(M,\mathbb{C})}\right)\norm{w}_{L^{p}(M,\mathbb{C})}\norm{v}_{L^{p}(M,\mathbb{C})},
	\end{equation}
	and
	\begin{align}\label{eq:integral_potential_2}
		\int_M &\lvert (\nabla W(u)-\nabla W(w)) \cdot v \rvert \mathrm{d}v_g \nonumber \\ &\leq \left(C_{\mathrm{sc,1}}+C_\mathrm{sc,2}\left(\norm{u}_{L^{p}(M,\mathbb{C})}+\norm{w}_{L^{p}(M,\mathbb{C})}\right)\right)\norm{u-w}_{L^p(M,\mathbb{C})}\norm{v}_{L^{p}(M,\mathbb{C})}.
	\end{align}
\end{lemma}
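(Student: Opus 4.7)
The plan is to derive both inequalities as straightforward consequences of the pointwise bound from \ref{ass:subcritical} combined with the three-factor H\"older inequality on $(M,g)$, exploiting the fact that the exponents $(p/(p-2), p, p)$ sum to $1$, which is precisely what makes the subcritical growth match the $L^p$-norms appearing on the right-hand side.

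For the first inequality, I would start from the pointwise bound
\[
	|D^2W(u(x))w(x)\cdot v(x)| \;\le\; \big(C_{\mathrm{sc},1}+C_{\mathrm{sc},2}|u(x)|^{p-2}\big)|w(x)||v(x)|
\]
given by \ref{ass:subcritical}, integrate over $M$, and split the resulting expression in two. On the $C_{\mathrm{sc},1}$ part, an application of H\"older's inequality with exponents $(p,p,p/(p-2))$ and the fact that $M$ is closed (hence of finite volume) yields a bound proportional to $\|w\|_{L^p}\|v\|_{L^p}$, the volume factor being absorbed into the constant. On the $C_{\mathrm{sc},2}$ part, H\"older's inequality with exponents $(p/(p-2),p,p)$ gives a contribution of the form $\|u\|_{L^p}^{p-2}\|w\|_{L^p}\|v\|_{L^p}$, which is the natural dependence consistent with the stated inequality.

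For the second inequality, I would apply the fundamental theorem of calculus to write
\[
	\nabla W(u)-\nabla W(w) \;=\; \int_0^1 D^2W\big(\tau u + (1-\tau)w\big)(u-w)\,\mathrm{d}\tau,
\]
so that integrating $|(\nabla W(u)-\nabla W(w))\cdot v|$ against $\mathrm{d}v_g$, exchanging the order of integration by Tonelli, and applying the first inequality pointwise in $\tau$ (with $u-w$ playing the role of $w$ and $\tau u+(1-\tau)w$ that of $u$) reduces the claim to controlling $\|\tau u+(1-\tau)w\|_{L^p}^{p-2}$ uniformly in $\tau\in[0,1]$. This is handled via the elementary inequality $|\tau a +(1-\tau)b|^{p-2}\le C_p(|a|^{p-2}+|b|^{p-2})$ (taken with constant $1$ if $p\le 3$ and $2^{p-3}$ if $p> 3$), after which integration in $\tau$ is trivial because the integrand is $\tau$-independent up to constants.

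The proof is conceptually routine, so the only point requiring minor care is the bookkeeping of H\"older exponents and the absorption of the volume of $M$ into the constants; no compactness or nonlinear analysis ingredient is needed beyond the growth assumption \ref{ass:subcritical} and the continuous embedding \eqref{eq:embedding}. I do not expect any genuine obstacle.
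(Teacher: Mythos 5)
Your proposal is correct and follows essentially the same route as the paper: the pointwise growth bound from \ref{ass:subcritical} combined with the three-factor H\"older inequality with exponents $\left(\tfrac{p}{p-2},p,p\right)$ for \eqref{eq:integral_potential_1}, and the fundamental theorem of calculus plus Fubini/Tonelli reducing \eqref{eq:integral_potential_2} to the first estimate, your convexity bound on $\lvert \tau a+(1-\tau)b\rvert^{p-2}$ being a trivially equivalent substitute for the triangle inequality used implicitly in the paper. Note that, exactly as in the paper's own argument, what one actually obtains is the power $\norm{u}_{L^p}^{p-2}$ (resp. a bound by $\norm{u}_{L^p}^{p-2}+\norm{w}_{L^p}^{p-2}$), so the first-power dependence written in the lemma's statement is an imprecision of the paper rather than a gap in your proof.
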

\begin{proof}
	From \ref{ass:subcritical}, we obtain
	\begin{equation}\label{eq:integral_potential_0}
		\int_M \lvert D^2W(u)w \cdot v \rvert \mathrm{d}v_g \leq \int_M \left( C_{\mathrm{sc,1}}+C_{\mathrm{sc,2}}\lvert u \rvert^{p-2}\right)\lvert w \rvert \lvert v \rvert \mathrm{d}v_g.
	\end{equation}
	By applying Hölder's inequality (for three functions instead of two) to 
	\begin{equation*}
		\int_M \lvert u \rvert^{p-2}\lvert w \rvert \lvert v \rvert \mathrm{d}v_g
	\end{equation*}
	with the exponent $p/(p-2)$ for the first factor and $p$ for each of the other two factors (so that the sum of the inverses of the exponents is equal to 1), we obtain \eqref{eq:integral_potential_1}
	Subsequently, by Fubini's Theorem we notice that
	\begin{equation*}
		\int_M \lvert (\nabla W(u)-\nabla W(w)) \cdot v \rvert \mathrm{d}v_g \leq \int_0^1 \int_M \lvert D^2W(su+(1-s)w) \rvert \lvert u-w \rvert \lvert v \rvert \mathrm{d}v_g \mathrm{d}s
	\end{equation*}
	and \eqref{eq:integral_potential_2} then follows by applying \eqref{eq:integral_potential_1}.
\end{proof}
\begin{lemma}[Basic properties of the functional setting]\label{lemma:basic_functional}
	The following properties hold:
	\begin{enumerate}[label=\textup{\roman*)}]
		\item\label{item:C1} For any positive $\varepsilon$, the functionals $E_{\varepsilon}$ and $\Phi_X$ are $C^1$ on $W^{1,2}(M,\mathbb{C})$ and
			\begin{equation}\label{eq:differential_energy}
				\mathrm{d}E_\varepsilon(u)(v)=\frac{1}{\pi \lvert \log \varepsilon \rvert}\int_{M}\left(g(\nabla u,\nabla v)+\frac{1}{\varepsilon^2}\nabla W(u) \cdot v \right) \mathrm{d}v_g,
			\end{equation}
			\begin{equation}\label{eq:differential_flux}
				\mathrm{d}\Phi_X(u)(v)=\frac{1}{4\pi}\int_M g(v^1\nabla u^2-v^2\nabla u^1,X) \mathrm{d}v_g,
			\end{equation}
			for any $u$ and $v$ in $W^{1,2}(M,\mathbb{C})$.
		\item For any positive $\phi$, the set $\mathcal{X}_\phi$ is a Banach manifold.\label{item:manifold}
		\item For any positive $\varepsilon$ and $\phi$, if $u$ is a critical point of $E_{\varepsilon,\phi}$ then there exists a real number $\lambda$ such that the pair $(u,\lambda)$ solves \eqref{eq:Ginzburg-PDE} in the classical sense.\label{item:regularity}
	\end{enumerate}
\end{lemma}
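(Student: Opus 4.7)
The plan is to address the three items in order, using Lemma~\ref{lemma:basic_potential} to handle the potential and exploiting that $X$ is divergence-free.

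For item~\ref{item:C1}, the kinetic part $u\mapsto\int_M\|\nabla u\|^2\,\de v_g$ is a continuous quadratic form and hence smooth on $W^{1,2}(M,\mathbb{C})$. For the potential part, I would prove Gâteaux differentiability by a standard difference-quotient argument and check operator-norm continuity of the derivative using both inequalities of Lemma~\ref{lemma:basic_potential}: \eqref{eq:integral_potential_1} bounds the second variation at each point, while \eqref{eq:integral_potential_2} combined with the continuous embedding \eqref{eq:embedding} shows that $u\mapsto \nabla W(u)$ is locally Lipschitz (and in particular continuous) as a map from $W^{1,2}(M,\mathbb{C})$ into $(W^{1,2}(M,\mathbb{C}))^*$. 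This yields \eqref{eq:differential_energy}. For $\Phi_X$, I would differentiate the defining formula naively to obtain four terms and then integrate by parts the two ``derivative-on-$v$'' terms using $\mathrm{div}\,X=0$; after this symmetrization the four terms collapse into \eqref{eq:differential_flux}, and continuity in $u$ is immediate from Hölder's inequality and the boundedness of $X$.

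For item~\ref{item:manifold}, I would apply the implicit function theorem to $\Phi_X\colon W^{1,2}(M,\mathbb{C})\to\mathbb{R}$. Since the target is one-dimensional, it suffices to show $\mathrm{d}\Phi_X(u)\neq 0$ at every $u\in\mathcal{X}_\phi$. This is the point requiring care. I would argue by contradiction: if $\mathrm{d}\Phi_X(u)=0$, then testing \eqref{eq:differential_flux} against $v=(v^1,0)$ with $v^1\in C^\infty(M)$ arbitrary gives $g(\nabla u^2,X)=0$ almost everywhere, and symmetrically $g(\nabla u^1,X)=0$ a.e.; substituting these identities into the defining formula \eqref{eq:def-flux} yields $\Phi_X(u)=0$, contradicting $\phi>0$. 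Surjectivity of $\mathrm{d}\Phi_X(u)$ onto $\mathbb{R}$ follows, its kernel automatically splits (being of codimension one in a Hilbert space), and $\mathcal{X}_\phi$ is thus a $C^1$ Banach submanifold of codimension~$1$.

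For item~\ref{item:regularity}, let $u$ be a critical point of $E_{\varepsilon,\phi}$. Item~\ref{item:manifold} permits the Lagrange multiplier theorem on Banach manifolds, yielding $\mu\in\mathbb{R}$ with $\mathrm{d} E_\varepsilon(u)=\mu\,\mathrm{d}\Phi_X(u)$ in $(W^{1,2}(M,\mathbb{C}))^*$. Writing this identity via \eqref{eq:differential_energy}--\eqref{eq:differential_flux} and undoing the integration by parts used in item~\ref{item:C1} produces the weak form of \eqref{eq:Ginzburg-PDE} with $\lambda$ a fixed scalar multiple of $\mu$. Classical elliptic regularity then promotes $u$ to a classical solution: \ref{ass:subcritical} together with $u\in W^{1,2}\hookrightarrow L^{p}$ gives $\nabla W(u)\in L^{p/(p-1)}$, and a standard bootstrap, using in addition that $X$ is smooth and $W\in C^2$, upgrades $u$ to $C^{2,\alpha}(M,\mathbb{C})$, at which point \eqref{eq:Ginzburg-PDE} holds pointwise. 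I expect the only genuinely non-routine step to be showing that $\mathrm{d}\Phi_X$ is non-vanishing on $\mathcal{X}_\phi$; all remaining pieces are bookkeeping once Lemma~\ref{lemma:basic_potential} is in hand.
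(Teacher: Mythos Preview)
Your proposal is correct and, for items~\ref{item:C1} and~\ref{item:regularity}, essentially matches the paper's proof: both use Lemma~\ref{lemma:basic_potential} together with the embedding~\eqref{eq:embedding} to handle the potential, integrate by parts using $\mathrm{div}\,X=0$ to obtain~\eqref{eq:differential_flux}, and then invoke the Lagrange multiplier theorem followed by the standard subcritical bootstrap.

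The one place where you diverge is item~\ref{item:manifold}, which you single out as ``the only genuinely non-routine step''. Your contradiction argument (if $\mathrm{d}\Phi_X(u)=0$, then $g(\nabla u^j,X)=0$ a.e.\ for $j=1,2$, hence $\Phi_X(u)=0$) is valid, but the paper's route is considerably shorter: simply evaluate the differential~\eqref{eq:differential_flux} at $v=u$ to get $\mathrm{d}\Phi_X(u)(u)=\tfrac{\phi}{2}\neq 0$. This exploits the quadratic nature of $\Phi_X$ directly and removes any need for a contradiction or for identifying the pointwise vanishing of $g(\nabla u^j,X)$. Your approach buys nothing extra here; the paper's one-line computation is the natural move and shows that the step you flagged as delicate is in fact the most routine of the three.
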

\begin{proof}
	\ref{item:C1}
	The fact that $\Phi_X$ is of class $C^1$ with differential as in \eqref{eq:differential_flux} follows by direct computations. One now checks that by \ref{ass:subcritical}, a Taylor expansion and \eqref{eq:embedding}, the functional $E_{\varepsilon}$ is finite on $W^{1,2}(M,\mathbb{C})$. Subsequently, by \eqref{eq:integral_potential_1} in Lemma \ref{lemma:basic_potential} along with \eqref{eq:embedding} one obtains that $E_\varepsilon$ is differentiable in the Fréchet sense on $W^{1,2}(M,\mathbb{C})$ with differential as in \eqref{eq:differential_energy}. The fact that $E_{\varepsilon}$ is of class $C^1$ follows then by \eqref{eq:integral_potential_2} in Lemma \ref{lemma:basic_potential} and \eqref{eq:embedding}.

	\ref{item:manifold}
	Notice then that for all $u$ in $\mathcal{X}_\phi$ one has $\mathrm{d}\Phi_X(u)(u)=\frac{\phi}{2}$, which is different than zero. This shows that $\mathrm{d}\Phi_X(u)$ is nonzero. As a consequence, the thesis follows by the Implicit Function Theorem.

	\ref{item:regularity}
	The Lagrange multiplier theorem implies that if $u$ is a critical point of $E_{\varepsilon,\phi}$, then there exists a Lagrange multiplier $\lambda \in \mathbb{R}$ such that $u$ is a weak solution of \eqref{eq:Ginzburg-PDE}. The fact that $u$ is a classical solution follows then by a standard combination of assumption \ref{ass:subcritical}, Sobolev embeddings, elliptic regularity and a bootstrap argument.
\end{proof}
\begin{lemma}[The Palais-Smale condition]\label{lemma:PS}
	For any positive $\varepsilon$ and $\phi$, the functional $E_{\varepsilon,\phi}$ satisfies the Palais-Smale condition.
\end{lemma}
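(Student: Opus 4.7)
The plan is to follow the classical strategy for constrained variational problems: bound a Palais--Smale sequence in $W^{1,2}(M,\mathbb{C})$, produce a bounded sequence of Lagrange multipliers, and upgrade weak convergence to strong convergence using the compact Sobolev embedding together with the structure of the equation. Let $(u_n)_{n \in \mathbb{N}} \subset \mathcal{X}_\phi$ be a Palais--Smale sequence for $E_{\varepsilon,\phi}$. The energy bound immediately yields a uniform $L^2$--bound on $\nabla u_n$ and a uniform $L^1$--bound on $W(u_n)$. Integrating the coercivity condition~\ref{ass:coercivity} radially from the sphere $\{|v|=R_c\}$, using $\nabla W(v)\cdot v \ge \alpha_c |v|^2$, gives $W(u) \ge \tfrac{\alpha_c}{2}|u|^2 - C$ for $|u| \ge R_c$, so the $L^1$--bound on $W(u_n)$ translates into an $L^2$--bound on $u_n$, and $(u_n)$ is bounded in $W^{1,2}$. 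Rellich's theorem then furnishes a subsequence with $u_n \rightharpoonup u$ weakly in $W^{1,2}$ and strongly in $L^q$ for every $q \in [1,p^*)$, in particular in $L^2$ and in $L^p$ for the exponent of~\ref{ass:subcritical}; moreover $u \in \mathcal{X}_\phi$, since $\Phi_X$ is weak-to-strong continuous thanks to the strong $L^2$--convergence in one of the two factors of $j(u_n)$.

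Next I produce a bounded sequence of Lagrange multipliers. Since $d\Phi_X(u_n)(u_n) = \phi/2 \neq 0$, the Palais--Smale condition on the Banach manifold $\mathcal{X}_\phi$ (see Lemma~\ref{lemma:basic_functional}~\ref{item:manifold}) provides real numbers $\lambda_n$ with $\|dE_\varepsilon(u_n) - \lambda_n\, d\Phi_X(u_n)\|_{(W^{1,2})^*} \to 0$. Evaluating this functional at $u_n$, which is bounded in $W^{1,2}$, yields
\[
	\lambda_n\, \tfrac{\phi}{2} = dE_\varepsilon(u_n)(u_n) + o(1).
\]
Using~\ref{ass:subcritical} to derive $|\nabla W(u)\cdot u| \leq C(|u|+|u|^p)$ and combining with the $W^{1,2}$--bound and the compact embedding~\eqref{eq:embedding}, the right-hand side is bounded, so $\lambda_n$ is bounded; up to a further subsequence, $\lambda_n \to \lambda \in \mathbb{R}$.

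Finally I show strong convergence $u_n \to u$ in $W^{1,2}$. Testing the relation
\[
	\frac{1}{\pi|\log\varepsilon|}\int_M \Big(g(\nabla u_n, \nabla v) + \tfrac{1}{\varepsilon^2}\,\nabla W(u_n)\cdot v\Big)\,\de v_g = \lambda_n\, d\Phi_X(u_n)(v) + o(\|v\|_{W^{1,2}})
\]
against $v = u_n - u$, the potential term vanishes thanks to~\eqref{eq:integral_potential_2} of Lemma~\ref{lemma:basic_potential} and the strong $L^p$--convergence of $u_n$, while the flux term vanishes by Cauchy--Schwarz using the $L^2$--boundedness of $\nabla u_n$, the boundedness of $X$, and the strong $L^2$--convergence $u_n \to u$. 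Hence $\int_M g(\nabla u_n, \nabla(u_n-u))\,\de v_g \to 0$, which together with $\int_M g(\nabla u_n, \nabla u)\,\de v_g \to \int_M \|\nabla u\|^2\,\de v_g$ (weak $L^2$--convergence) gives $\|\nabla u_n\|_{L^2} \to \|\nabla u\|_{L^2}$. Norm convergence plus weak convergence in the Hilbert space $L^2$ yields $\nabla u_n \to \nabla u$ strongly in $L^2$, and combined with $u_n \to u$ in $L^2$ one concludes $u_n \to u$ strongly in $W^{1,2}$.

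The main obstacle I anticipate is the correct handling of the Lagrange multipliers $\lambda_n$, since the Palais--Smale condition is stated on the cotangent bundle of the manifold $\mathcal{X}_\phi$ rather than on the ambient space $W^{1,2}(M,\mathbb{C})$. The saving grace is that $u_n$ itself, which is bounded in $W^{1,2}$, satisfies $d\Phi_X(u_n)(u_n) = \phi/2 \neq 0$, so the standard Lagrange multiplier rule identifies $\lambda_n$ explicitly via the radial direction and delivers the needed a~priori bound. Everything else is a routine combination of the subcritical structure of $W$, the coercivity at infinity, and Rellich--Kondrachov compactness on the closed manifold $M$.
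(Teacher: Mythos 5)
Your proof is correct and follows essentially the same scheme as the paper's: boundedness of the Palais--Smale sequence via coercivity, boundedness of the multipliers by testing at $u_n$ and using $\mathrm{d}\Phi_X(u_n)(u_n)=\phi/2$ with the subcritical growth of $W$, preservation of the constraint by compactness, and strong $W^{1,2}$-convergence from norm convergence of the gradients. The only (harmless) variation is that you test the approximate Euler--Lagrange relation directly against $u_n-u$, whereas the paper first passes to the limit equation for $u_\infty$ and then compares; both yield $\lVert\nabla u_n\rVert_{L^2}\to\lVert\nabla u_\infty\rVert_{L^2}$ in the same way.
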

\begin{proof}
	Let $(u_n)_{n \in \mathbb{N}}$ be a Palais-Smale sequence for $E_{\varepsilon,\phi}$, which means that $(E_{\varepsilon,\phi}(u_n))_{n \in \mathbb{N}}$ is bounded and there exists $(\lambda_n)_{n \in \mathbb{N}}$ a sequence of real numbers such that
	\begin{equation}\label{eq:PS_derivative}
		\lim_{n \to \infty} \mathrm{d}E_{\varepsilon}(u_n)-\lambda_n \mathrm{d}\Phi_X(u_n)=0, \quad \mbox{ on } W^{-1,2}(M,\mathbb{C}).
	\end{equation}
	Since $(E_{\varepsilon,\phi}(u_n))_{n \in \mathbb{N}}$ is bounded, it readily follows by the coercivity assumption \ref{ass:coercivity}
	that $(u_n)_{n \in \mathbb{N}}$ is a bounded sequence in $W^{1,2}(M,\mathbb{C})$.
	As a consequence, we obtain from \eqref{eq:PS_derivative} 
	\begin{equation}\label{eq:PS_derivative_2}
		\lim_{n \to \infty} \mathrm{d}E_{\varepsilon}(u_n)(u_n)-\lambda_n \mathrm{d}\Phi_X(u_n)(u_n)=0.
	\end{equation}
	According to \eqref{eq:differential_flux} in Lemma \ref{lemma:basic_functional}, one has
	\begin{equation}\label{eq:flux_unun}
		\mathrm{d}\Phi_X(u_n)(u_n)= \frac{\phi}{2}, \quad \mbox{ for all } n \in \mathbb{N}.
	\end{equation}
	By combining \eqref{eq:integral_potential_2} in Lemma \ref{lemma:basic_potential} with \eqref{eq:differential_energy} in Lemma \ref{lemma:basic_functional} and the fact that $(u_n)_{n \in \mathbb{N}}$ is bounded in $W^{1,2}(M,\mathbb{C})$, it follows
	\begin{equation}\label{eq:sup_derivatives_unun}
		\sup_{n \in \mathbb{N}}\lvert \mathrm{d}E_\varepsilon(u_n)(u_n) \rvert <+\infty.
	\end{equation}
	By combining \eqref{eq:PS_derivative_2}, \eqref{eq:flux_unun} and \eqref{eq:sup_derivatives_unun}, we get that $(\lambda_n)_{n \in \mathbb{N}}$ is bounded, thus it converges to some $\lambda_\infty \in \mathbb{R}$ up to an extraction. Since $(u_n)_{n \in \mathbb{N}}$ is bounded in $W^{1,2}(M,\mathbb{C})$, it converges weakly in $W^{1,2}(M,\mathbb{C})$ to some $u_\infty$. Up to a further extraction, we also have that $(u_n)_{n \in \mathbb{N}}$ converges strongly to $u_\infty$ in $L^p(M,\mathbb{C})$. These facts imply that
	\begin{equation}\label{eq:PS_lim_flux}
		\lim_{n \to \infty}\mathrm{d}\Phi_X(u_n)(v)=\mathrm{d}\Phi_X(u_\infty)(v), \quad \mbox{ for all } v \in W^{1,2}(M,\mathbb{C}),
	\end{equation}
	\begin{equation}\label{eq:PS_lim_potential}
		\lim_{n \to \infty}\int_{M}\nabla W(u_n) \cdot v\mathrm{d}v_g =\int_M \nabla W(u_\infty) \cdot v \mathrm{d}v_g,\quad \mbox{ for all } v \in W^{1,2}(M,\mathbb{C}),
	\end{equation}
	and
	\begin{equation}\label{eq:PS_lim_gradient}
		\lim_{n \to \infty}\int_{M}g(\nabla u_n,\nabla v)\mathrm{d}v_g=\int_{M}g(\nabla u_\infty,\nabla v)\mathrm{d}v_g, \quad \mbox{ for all } v \in W^{1,2}(M,\mathbb{C}),
	\end{equation}
	where for \eqref{eq:PS_lim_potential} we have also used  \eqref{eq:integral_potential_2} in Lemma \ref{lemma:basic_potential}. Combining \eqref{eq:PS_derivative} with \eqref{eq:PS_lim_flux}, \eqref{eq:PS_lim_potential} and \eqref{eq:PS_lim_gradient} we obtain
	\begin{equation}\label{eq:PS_eq_limit}
		\mathrm{d}E_{\varepsilon}(u_\infty)-\lambda_\infty \mathrm{d}\Phi_X(u_\infty)=0,  \quad \mbox{ on } W^{-1,2}(M,\mathbb{C}).
	\end{equation}
	Notice now that the $L^p$-strong convergence of $(u_n)_{n \in \mathbb{N}}$ and the $L^2$-weak convergence of $(\nabla u_n)_{n \in \mathbb{N}}$ imply that $(j(u_n))_{n \in \mathbb{N}}$ converges weakly in $L^2(M,\mathbb{C})$ to $j(u_\infty)$, which means that $u_\infty \in \mathcal{X}_\phi$ and thus, by \eqref{eq:PS_eq_limit}, it is a critical point of $E_{\varepsilon,\phi}$. Moreover, it also follows that
	\begin{equation}\label{eq:PS_lim_flux_n}
		\lim_{n \to \infty}\mathrm{d}\Phi_X(u_n)(u_n)=\mathrm{d}\Phi_X(u_\infty)(u_\infty).
	\end{equation}
	Using again \eqref{eq:integral_potential_2} in Lemma \ref{lemma:basic_potential}, we get
	\begin{equation}\label{eq:PS_lim_potential_n}
		\lim_{n \to \infty}\int_{M}\nabla W(u_n) \cdot u_n \mathrm{d}v_g =\int_M \nabla W(u_\infty) \cdot u_\infty \mathrm{d}v_g.
	\end{equation}
	By plugging \eqref{eq:PS_lim_flux_n} and \eqref{eq:PS_lim_potential_n} into \eqref{eq:PS_derivative_2} and then using \eqref{eq:PS_eq_limit}, we get
	\begin{equation}\label{eq:PS_lim_gradient_n}
		\lim_{n \to \infty}\int_{M}g(\nabla u_n,\nabla u_n)\mathrm{d}v_g=\int_{M}g(\nabla u_\infty,\nabla u_\infty)\mathrm{d}v_g.
	\end{equation}
	Since $(\nabla u_n)_{n \in \mathbb{N}}$ converges $L^2$-weakly to $\nabla u_\infty$, we deduce from \eqref{eq:PS_lim_gradient_n} that $(u_n)_{n \in \mathbb{N}}$ converges to $u_\infty$ strongly in $W^{1,2}(M,\mathbb{C})$. This completes the proof.
\end{proof}
\section{\texorpdfstring{$\Gamma$}{}--convergence}
\label{sec:gamma-convergence}

In this section we show that, 
for any flux constraint $\phi$,
the family of functionals
$(E_{\varepsilon,\phi})_{\varepsilon > 0}$
$\Gamma$--converges to the 
mass functional for the $(N-2)$--dimensional integral boundaries in $M$, as $\varepsilon \to 0$.
Actually, we will present and prove just the liminf and 
equi--coerciveness properties of the $\Gamma$--convergence,
as we do not need the limsup property in its full generality.
Indeed, we will construct
recovery sequences only for 
boundaries of small $(N-1)$--dimensional 
disks (cf. Proposition~\ref{lem:photoConstruction}).
Therefore, we prove the following
two results.

\begin{proposition}[Equicoercivity]
	\label{prop:Gamma-equicoer}
	For any family $(u_\varepsilon)_{\varepsilon > 0}\subset \mathcal{X}_\phi$
	such that
	$\limsup_\varepsilon E_\varepsilon(u_\varepsilon) < + \infty$,
	there exists a $(N-2)$--dimensional
	integral boundary $\partial T$ such that 
	$T \in \mathcal{I}_{N-1}^\phi$ and
	$ \star J(u_\varepsilon)$ converges in the flat norm
	to $\pi\partial T$,
	up to subsequences.
\end{proposition}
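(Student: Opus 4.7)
The plan is to establish the convergence in three steps: first, apply Jacobian compactness to produce integer $(N-1)$-currents whose boundaries approximate $\star J(u_\varepsilon)/\pi$; second, extract a flat limit via Federer--Fleming; third, verify the flux constraint by duality with $Y$.

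First, I would note that the hypothesis $\limsup_\varepsilon E_\varepsilon(u_\varepsilon) < \infty$ places us in the critical vortex regime, as the unscaled Ginzburg--Landau energies $\pi|\log\varepsilon|\,E_\varepsilon(u_\varepsilon)$ grow at most like $|\log\varepsilon|$. In this regime, the Jerrard--Soner/Alberti--Baldo--Orlandi Jacobian theory on closed manifolds (\cite{Jerrard1999, AlbertiBaldoOrlandi-Indiana2005}) provides integer-multiplicity rectifiable $(N-1)$-currents $T_\varepsilon \in \mathcal{I}_{N-1}(M)$ with uniformly bounded mass $\mathbf{M}(T_\varepsilon)+\mathbf{M}(\partial T_\varepsilon) \le C$ and
\begin{equation*}
	\mathbf{F}\big(\pi\,\partial T_\varepsilon - \star J(u_\varepsilon)\big) \to 0 \quad \text{as } \varepsilon \to 0.
\end{equation*}
Concretely, I would construct each $T_\varepsilon$ as a Seifert-type filling of the vortex set of $u_\varepsilon$---for instance, the preimage under $u_\varepsilon$ of a generic curve in $\mathbb{C}$ joining $0$ to the bulk region $\{|u|\ge 1\}$---combined with an integer isoperimetric filling on $M$ whose mass is uniformly controlled because the Jacobian cycles are null-homologous (a consequence of the trivial first Chern class of the product bundle $M\times\mathbb{C}$).

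I would then apply the Federer--Fleming compactness theorem recalled in Section~\ref{sec:notation} to extract a subsequence $T_\varepsilon \to T$ in flat norm, with $T \in \mathcal{I}_{N-1}(M)$. Continuity of $\partial$ under flat convergence yields $\partial T_\varepsilon \to \partial T$, and hence $\star J(u_\varepsilon) \to \pi\,\partial T$ in flat norm. For the flux constraint, I would use identity~\eqref{eq:flux-Jacobian} together with the co-exactness $\mathrm{d}Y = \star X$ to write $\star J(u_\varepsilon)(Y)/\pi = \Phi_X(u_\varepsilon) = \phi$; since $Y$ is a smooth test $(N-2)$-form, passing to the flat limit gives $\partial T(Y) = \phi$, and therefore $T(\star X) = T(\mathrm{d}Y) = \partial T(Y) = \phi$, so that $T \in \mathcal{I}^\phi_{N-1}(M)$.

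The main obstacle will be the first step, namely the construction of the $(N-1)$-currents $T_\varepsilon$. The classical Jerrard--Soner compactness alone identifies flat limits of $\star J(u_\varepsilon)/\pi$ only as integer $(N-2)$-cycles; promoting them to integer boundaries with uniformly bounded mass requires either the coarea-type filling construction above or an isoperimetric argument exploiting that the integer homology class of the Jacobian vanishes. This is the technical heart of the proof and is worked out in the closed-manifold setting in~\cite{AlbertiBaldoOrlandi-Indiana2005}.
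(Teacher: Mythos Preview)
Your strategy is sound and the flux verification via pairing with $Y$ is exactly what the paper does. The organizational difference is this: you propose to first produce integer fillings $T_\varepsilon$ of approximations to $\star J(u_\varepsilon)/\pi$ with uniformly bounded mass and boundary mass, then apply Federer--Fleming directly to the $T_\varepsilon$. The paper instead first localizes via a finite atlas of bi-Lipschitz charts, applies the Euclidean equicoercivity result of \cite{AlbertiBaldoOrlandi-Indiana2005} in each chart, and patches the local limits with a partition of unity to obtain a global flat limit $S$; only afterwards does it show $S=\partial T$ by filling each $\star J(u_{\varepsilon_j})$ via the isoperimetric inequality on $M$ and passing to a compactness limit. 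Both routes ultimately rely on the same two ingredients (Jacobian compactness and an isoperimetric filling), so the difference is structural rather than essential; your ordering is arguably cleaner when the manifold Jacobian theory is available off the shelf.

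One point to tighten: you attribute the closed-manifold Jacobian compactness directly to \cite{AlbertiBaldoOrlandi-Indiana2005}, but that paper treats Euclidean (or Lipschitz) domains. The extension to closed Riemannian manifolds is folklore and is precisely what the paper supplies via the chart-and-partition-of-unity argument (the only written reference it cites is Mesaric's thesis in dimension three). So your ``black box'' first step is not in the literature you cite; if you take your route, you must either carry out the localization yourself or cite a source that does. Also, your heuristic description of $T_\varepsilon$ as a preimage of a generic curve is not how the fillings are actually produced---both your outline and the paper obtain them from the integer isoperimetric inequality applied to the approximating integral cycles, and that is the argument you should keep.
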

\begin{proposition}[Liminf property]
	\label{prop:Gamma-liminf}
	For any family $(u_\varepsilon)_{\varepsilon > 0}\subset \mathcal{X}_\phi$
	and $\partial T \subset \mathcal{IB}_{N-2}^\phi$
	such that
	$ \star J(u_\varepsilon)$ converges in the flat norm
	to $\pi\partial T$,
	we have that 
	\begin{equation}
		\label{eq:Gamma-liminf}
		\liminf_{\varepsilon \to 0}
		E_{\varepsilon,\phi}(u_\varepsilon) \ge
		\norm{\partial T}(M);
	\end{equation}
	more generally,
	for any open subset $U \subset M$
	we have
	\begin{equation}
		\label{eq:Gamma-liminfLoc}
		\liminf_{\varepsilon \to 0}
		\frac{1}{\pi \abs{\log \varepsilon}}
		\int_U 
		\Big(\frac{1}{2}\norm{\nabla u_\varepsilon}^2
			+ \frac{1}{\varepsilon^2}
		W(u_\varepsilon)\Big)\mathrm{d}v_g
		\ge 
		\norm{\partial T}(U).
	\end{equation}
\end{proposition}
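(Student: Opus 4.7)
The plan is to reduce the statement to the established Euclidean $\Gamma$-liminf inequality for Ginzburg--Landau functionals on flat domains, due to Jerrard--Soner and Sandier in two dimensions and extended to higher dimensions by Alberti--Baldo--Orlandi \cite{AlbertiBaldoOrlandi-Indiana2005} and Bethuel--Brezis--Orlandi \cite{BethuelBrezisOrlandi2001}. First observe that \eqref{eq:Gamma-liminf} is the particular case $U=M$ of \eqref{eq:Gamma-liminfLoc}, so it suffices to establish the localized bound. Since $\|\partial T\|$ is a Radon measure on $M$, by inner regularity it is enough to prove, for every compactly contained open set $V\Subset U$, the inequality
\begin{equation*}
\liminf_{\varepsilon\to 0}\frac{1}{\pi\abs{\log\varepsilon}}\int_V\Big(\tfrac{1}{2}\norm{\nabla u_\varepsilon}^2+\tfrac{1}{\varepsilon^2}W(u_\varepsilon)\Big)\mathrm{d}v_g\ge \|\partial T\|(V).
\end{equation*}

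First I would localize via normal coordinates. Given $\eta>0$, cover $V$ by finitely many pairwise disjoint geodesic balls $B_g(x_i,r_i)$ with $r_i<\eta<r_0$ and $\bigcup_i B_g(x_i,r_i)\subset U$, chosen (via a Vitali-type covering argument applied to the measure $\|\partial T\|$) so that $\sum_i \|\partial T\|(B_g(x_i,r_i))\ge \|\partial T\|(V)-\eta$. In normal coordinates around $x_i$ the metric coefficients satisfy $g_{jk}=\delta_{jk}+O(r_i^2)$, so the volume form, the gradient energy, and the expression \eqref{eq:JacobianFormula} for the Jacobian all differ from their Euclidean counterparts by multiplicative factors $1+O(\eta^2)$. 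In particular, the Ginzburg--Landau integrand and the Jacobian $\star J(u_\varepsilon)$ on $B_g(x_i,r_i)$ are comparable, uniformly in $\varepsilon$, to their pull-backs to the Euclidean ball of radius $r_i$.

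Second, I would transfer the flat-norm convergence to each ball. A standard slicing argument based on the coarea-type inequality for the flat norm (as used repeatedly in \cite{AlbertiBaldoOrlandiJEMS, AlbertiBaldoOrlandi-Indiana2005}) shows that for almost every radius $\rho\in(r_i/2,r_i)$ the restriction $\star J(u_\varepsilon)\llcorner B_g(x_i,\rho)$ converges in the flat norm of $B_g(x_i,\rho)$ to $\pi\,(\partial T)\llcorner B_g(x_i,\rho)$, along a suitable subsequence. Choosing such a $\rho_i$ for each $i$, one applies the Euclidean higher-dimensional liminf inequality (Alberti--Baldo--Orlandi) on each $B_g(x_i,\rho_i)$ to obtain
\begin{equation*}
\liminf_{\varepsilon\to 0}\frac{1}{\pi\abs{\log\varepsilon}}\int_{B_g(x_i,\rho_i)}\Big(\tfrac{1}{2}\norm{\nabla u_\varepsilon}^2+\tfrac{1}{\varepsilon^2}W(u_\varepsilon)\Big)\mathrm{d}v_g\ge (1-C\eta^2)\|\partial T\|(B_g(x_i,\rho_i)).
\end{equation*}
Summing over $i$ using disjointness, combining with the superadditivity of the liminf over disjoint open sets, and letting $\eta\to 0$ concludes the argument.

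The main obstacle is the last step of the localization: passing from the global flat-norm convergence of $\star J(u_\varepsilon)$ on $M$ to flat-norm convergence on the individual balls, which is non-trivial because the flat norm does not restrict naturally (a decomposition $T=R+\partial\widetilde R$ that is optimal on $M$ need not be optimal on a sub-ball, and boundary terms may appear along $\partial B_g(x_i,\rho)$). This is handled by the Fubini/slicing trick: the decompositions witnessing the flat convergence on $M$ produce, by integration over $\rho$ and Fatou's lemma, a set of radii of positive measure along which the boundary traces vanish in the limit, yielding admissible local decompositions. Once this technical point is established, the remainder is bookkeeping: keeping track of the $1+O(\eta^2)$ metric distortion and exploiting the countable additivity of $\|\partial T\|$.
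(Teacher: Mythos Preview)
Your proposal is correct and follows the same localize-to-charts strategy as the paper, but there is one noteworthy difference in how the local step is executed. You invoke the Alberti--Baldo--Orlandi liminf inequality, which as usually stated requires flat-norm convergence of the Jacobians on the local domain; since the flat norm does not restrict, you correctly identify this as the main technical obstacle and handle it with a slicing/Fubini argument to find good radii. The paper instead cites the measure-theoretic version of the lower bound due to Jerrard--Soner (\cite[Theorem~5.2]{JerrardSoner2002}), which only needs convergence of the Jacobians in the dual of $C^{0,\alpha}_c$; this weaker convergence is implied by global flat-norm convergence and localizes trivially (test forms are compactly supported in the chart), so no slicing is needed. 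Both routes work; the Jerrard--Soner formulation simply bypasses the restriction issue you took care to solve, at the cost of citing a slightly less commonly stated result. Your bookkeeping via normal coordinates with $1+O(\eta^2)$ distortion, Vitali disjoint covers, and superadditivity of the liminf is the same in spirit as the paper's use of bi-Lipschitz charts with constant $C_\varphi\to 1$ and disjoint balls.
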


The $\Gamma$–convergence of the Ginzburg–Landau functional
was first proved in the Euclidean
setting in~\cite{AlbertiBaldoOrlandi-Indiana2005}, building upon the so-called ``ball construction'' introduced in independent works by Jerrard \cite{Jerrard1999} and Sandier \cite{Sandier1998}.
Its adaptation to closed Riemannian manifolds is well known and widely used (see, e.g., \cite{ColinetJerrardSternberg,dephilippis-pigati}),
although a complete proof is often omitted or only sketched.
To the best of our knowledge,
the only explicitly written proof in the Riemannian
setting is \cite[Theorem~5.1]{MesaricThesis} in dimension three.
That argument actually extends to any dimension via a standard partition of unity argument that 
allows to invoke the Euclidean $\Gamma$--convergence
results 
(as already suggested in~\cite{AlbertiBaldoOrlandi-Indiana2005},
there for compact manifolds with boundary).
For completeness, we provide self–contained
proofs of Propositions~\ref{prop:Gamma-equicoer} 
and~\ref{prop:Gamma-liminf}.

\begin{proof}[Proof of Proposition~\ref{prop:Gamma-equicoer}]

	Let $r_0$ be as in~\eqref{eq:def-r0},
	so that $r_0 \le \operatorname{inj}(M)/2$,
	and $\{x_1,\dots, x_n\} \subset M$ a finite collection of points 
	such that $\bigcup_{i = 1}^n B_g(x_i,r_0) = M$.
	As $r_0 < \operatorname{inj}(M)/2$, for any 
	$B_g(x_i,r_0)$ there exists a local chart that covers it,
	hence a bi-Lipschitz map
	$\varphi_i \colon B_g(x_i,r_0) \to U_i \subset \mathbb{R}^N$.
	For any $i = 1,\dots, n$, let us define
	$v_{\varepsilon,i} \in W^{1,2}(U_i,\mathbb{C})$
	as
	\[
		v_{\varepsilon,i}   = u_\varepsilon \circ \varphi_i^{-1}.
	\]
	By standard estimates of the Riemannian metric 
	with respect to the Euclidean one, 
	which can be applied by considering a smaller radius
	$r < r_0$ if necessary,
	there exists a positive constant $C$
	such that 
	\[
		E_\varepsilon(v_{\varepsilon_i})
		= \frac{1}{\pi|\log\varepsilon|}
		\int_{\textstyle U_i} 
		\left(\frac{1}{2}\lVert \nabla v_{\varepsilon,i}\rVert^2
		+ \frac{1}{\varepsilon^2} W(v_{\varepsilon,i})\right)
		\mathrm{d}x
		\le C \, E_{\varepsilon,\phi}(u_\varepsilon),
		\quad \forall i = 1,\dots,n.
	\]
	By hypothesis, this implies that
	\[
		\limsup_{\varepsilon\to 0} E_{\varepsilon}(v_{\varepsilon,i})
		< C \limsup_{\varepsilon\to 0} E_{\varepsilon,\phi}(u_{\varepsilon}) 
		< +\infty,
		\qquad \forall i = 1,\dots, n,
	\]
	and we can invoke the Euclidean $\Gamma$--convergence 
	equicoercivity result given in~\cite[Theorem 1.1~(i)]{AlbertiBaldoOrlandi-Indiana2005}.
	Therefore, after a diagonalization extraction, 
	we obtain an infinitesimal sequence $(\varepsilon_j)_{j \in \mathbb{N}}$ in $(0,1)$
	and integral boundaries $S_i \in \mathcal{IB}_{N-2}(U_i)$
	for $i=1,\dots,n$
	such that 
	\[
		\lim_{j \to \infty}
		\mathbf{F}_{U_i}\big(
			\star J(v_{\varepsilon_j,i})
			- \pi \, S_i
		\big) = 0,
		\qquad \forall i= 1,\dots, n,
	\]
	where $\mathbf{F}_{U_i}$ denotes the flat norm operator
	for currents in $U_i \subset \mathbb{R}^N$.
	Then, 
	since every map $\varphi_i$ is bi-Lipschitz, 
	we obtain also
	\begin{equation}
		\label{eq:equicoercivity-proof1}
		\lim_{j \to \infty}
		\mathbf{F}_{B_g(x_i,r)}\big(
			\star J (u_{\varepsilon_j})
			- \pi (\varphi_i^{-1})_\# S_i 
		\big) 
		\le 
		\lim_{j \to \infty}
		C\, \mathbf{F}_{U_i}\big(
			\star J(v_{\varepsilon_j,i})
			- \pi \, S_i
		\big) = 0, 
	\end{equation}
	for some positive constant $C$.
	Now, let $(\chi_i)_{i = 1,\dots,n}$
	be a partition of unity subordinate to 
	$\big(B_g(x_i,r)\big)_{i = 1,\dots,n}$,
	and define the current $S \in \mathcal{D}_{N-2}(M)$ as
	\[
		S = \sum_{i= 1}^n
		\chi_i \, (\varphi_i^{-1})_\sharp S_i.
	\]
	We then obtain
	\begin{align*}
		\mathbf{F}_{M}\big(\star J(u_{\varepsilon_j}) - \pi S\big)
	&\le 
	\sum_{i=1}^n
	\mathbf{F}_{M}\big(\chi_i\big(\star J(u_{\varepsilon_j}) - \pi S\big)\big) \\
	&\le 
	\sum_{i=1}^n
	\mathbf{F}_{B_g(x_i,r)}\!\big(\star J(u_{\varepsilon_j}) - \pi
	(\varphi_i^{-1})_{\#} S_i\big).
	\end{align*} 

	Then, by using~\eqref{eq:equicoercivity-proof1} we obtain
	\[
		\lim_{j \to \infty}
		\mathbf{F}_{M}\big(
			\star J(u_{\varepsilon_j})-
			\pi S
		\big) = 0.
	\]
	At this point, one sees that the current $S$ is an integral boundary,
	i.e., there exists $T \in \mathcal{I}_{N-1}(M)$ such that
	$\partial T=S$.
	Indeed, by the definition of the flat norm, there exist currents
	$Q^{j}$ and $R^{j}$ with
	$\star J(u_{\varepsilon_j})-\pi S = Q^{j}+\partial R^{j}$ and
	$\mathbf{M}(Q^{j})+\mathbf{M}(R^{j})\to0$.
	Since $\mathbf{M}(\star J(u_{\varepsilon_j}))$ is uniformly bounded, the isoperimetric
	inequality on $M$ provides fillings $T_j\in\mathcal{I}_{N-1}(M)$ with
	$\partial T_j=\star J(u_{\varepsilon_j})$ and $\sup_j \mathbf{M}(T_j)<\infty$.
	Hence
	\[
		S = \partial T_j - Q^{j} - \partial R^{j}
		= \partial(T_j - R^{j}) - Q^{j}.
	\]
	By a small–cycle filling estimate , write
	$Q^{j}=\partial \widetilde R^{j}$ with $M(\widetilde R^{j})\le C\,M(Q^{j})\to0$.
	Therefore
	\[
		S = \partial\big(T_j - R^{j} - \widetilde R^{j}\big),
		\qquad
		\sup_j M\big(T_j - R^{j} - \widetilde R^{j}\big)<\infty.
	\]
	By compactness of integral currents, up to a subsequence
	$T_j - R^{j} - \widetilde R^{j}\rightharpoonup T\in\mathcal I_{N-1}(M)$,
	and $\partial T= S$.

	Finally,
	as $J(u_\varepsilon)$ converges to $\pi \partial T$
	in the flat norm,
	then the flux constraint is preserved in the limit.
	Indeed, as $X$ is co–exact,
	we have
	\[
		\phi = \frac{1}{\pi}\int_M J(u_\varepsilon)\wedge Y
		\to \partial T(Y)=T(\mathrm{d}Y)=T(\star X),
	\]
	and the limit current $T$
	belongs to $\mathcal{I}_{N-1}^\phi$.
\end{proof}
\begin{proof}[Proof of Proposition~\ref{prop:Gamma-liminf}]
	As in the proof of equicoercivity, we apply a localization argument to invoke the Euclidean $\Gamma$--convergence counterpart.
	In particular, we rely on~\cite[Theorem 5.2]{JerrardSoner2002}.
	Although
	\cite{JerrardSoner2002} treats the standard potential $W(u)=\tfrac14(1-|u|^2)^2$,
	the same $\Gamma$–convergence lower bound holds
	for any potential $W$ fulfilling our hypotheses 
	(cf.~\cite{AlbertiBaldoOrlandi-Indiana2005}).

	For any $x\in M$ and $r\in(0,r_0)$, let
	$\varphi\colon B_g(x,r)\to U\subset\mathbb{R}^N$
	be a bi-Lipschitz chart, and set
	$v_\varepsilon \coloneqq u_\varepsilon\circ\varphi^{-1}\in W^{1,2}(U,\mathbb{C})$
	and $\nu \coloneqq \varphi_\#(\partial T)\in \mathcal{B}_{N-2}(U)$.
	By definition,
	\[
		\|\nu\|(A) = \|\partial T\|(\varphi^{-1}(A))
		\qquad\text{for every Borel set } A\subset U.
	\]
	Since $\varphi$ is bi-Lipschitz, flat convergence is preserved, namely
	\[
		\lim_{\varepsilon \to 0} \mathbf F_U(\star J v_\varepsilon - \pi\nu)= 0.
	\]
	Moreover, on $U$ the flat norm controls the $(C^{0,\alpha})^*$–norm of currents;
	hence
	\[
		J v_\varepsilon \to \pi\nu
		\quad\text{in } (C^{0,\alpha}_c(U))^*\text{ for every }\alpha\in(0,1).
	\]

	To apply~\cite[Theorem~5.2]{JerrardSoner2002},
	for each $\varepsilon$
	we define the energy measure of $v_{\varepsilon}$ on the Borel sets of $U$ by
	\[
		\mu_{\varepsilon}(A)
		\coloneqq  
		\frac{1}{\pi|\log\varepsilon|}
		\int_{\textstyle A}
		\Big(
			\frac{1}{2}
			\lVert\nabla v_{\varepsilon}\rVert ^2
			+ \frac{1}{\varepsilon^2}
		W(v_\varepsilon)\Big)
		\mathrm{d}x,
	\]
	Without loss of generality, 
	we may assume $\liminf E_{\varepsilon,\phi}(u_\varepsilon) < +\infty$.
	Then
	\begin{multline*}
		\liminf_{\varepsilon \to 0}
		\mu_{\varepsilon}(U)
		\le C_{\varphi}\, 
		\liminf_{\varepsilon \to 0}
		\frac{1}{\pi|\log\varepsilon|}
		\int_{\textstyle B_g(x,r)}
		\Big(
			\frac{1}{2}
			\lVert\nabla u_{\varepsilon}\rVert^2
			+ \frac{1}{\varepsilon^2}
		W(\varepsilon)\Big)
		\mathrm{d}v_g \\
		\le 
		C_{\varphi} 
		\liminf_{\varepsilon \to 0}
		E_{\varepsilon,\phi}(u_{\varepsilon})
		< +\infty,
	\end{multline*}
	where $C_{\varphi} > 0$ is a constant depending only
	on the bi-Lipschitz map $\varphi$.
	Moreover, $C_{\varphi} \to 1 $ as the radius 
	$r$ of the ball $B_g(x,r)$ tends to $0$.
	Since $\liminf_{\varepsilon \to 0}\mu_{\varepsilon}(U)$
	is bounded, there exists an
	infinitesimal sequence $(\varepsilon_j)_{j \in \mathbb{N}}$ in $(0,1)$ and a 
	Radon measure $\mu$ on $U$
	such that $\mu_{\varepsilon_j} \rightharpoonup \mu$
	in the sense of measures,
	as $j \to \infty$.

	By~\cite[Theorem~5.2]{JerrardSoner2002},
	we obtain that 
	\[
		|\nu| \ll \mu,
		\quad\text{and}\quad 
		\frac{\mathrm{d}|\nu|}{\mathrm{d}\mu} \le 1,
	\]
	hence
	\begin{multline}
		\label{eq:gammaliminf-proof1}
		\|\partial T\|\big(B_g(x,r)\big)
		= 
		\|\nu\|(U)
		\le  \mu(U)
		\le \liminf_{\varepsilon_j \to 0}
		\mu_{\varepsilon_j}(U) \\
		\le  
		C_{\varphi}\, 
		\liminf_{\varepsilon_j \to 0}
		\frac{1}{\pi|\log\varepsilon_j|}
		\int_{\textstyle B_g(x,r)}
		\Big(
			\frac{1}{2}
			\lVert\nabla u_{\varepsilon_j}\rVert^2
			+ \frac{1}{\varepsilon_j^2}
		W(u_{\varepsilon_j})\Big)
		\mathrm{d}v_g,
	\end{multline}
	so the liminf property holds locally.

	To obtain the final estimate,
	namely
	$\liminf_{\varepsilon \to 0}E_{\varepsilon,\phi}(u_{\varepsilon}) \ge \|\partial T\|(M)$,
	we can apply the above construction on
	a finite number of local charts.
	For any $\delta \in (0,1)$, choose points
	$\{x_1,\dots,x_n\}\subset M$ and a radius $r>0$
	such that the balls $B_g(x_i,r)$ are disjoint,
	the constants $C_{\varphi_i}$ associated with
	the bi-Lipschitz maps $\varphi_i:B_g(x_i,r)\to U_i$
	satisfy $|1-C_{\varphi_i}|\le\delta$ and
	\[
		\sum_{i = 1}^n
		\|\nu_i\|(U_i) = 
		\sum_{i = 1}^n
		\|\partial T\|(B_g(x_i,r))
		\ge \|\partial T\|(M) - \delta.
	\]
	Applying~\eqref{eq:gammaliminf-proof1}
	on each chart yields
	\begin{align*}
		\liminf_{\varepsilon \to 0}
		E_{\varepsilon,\phi}(u_{\varepsilon})
		  &
		  \ge
		  \sum_{i = 1}^n        
		  \liminf_{\varepsilon \to 0}
		  \frac{1}{\pi|\log\varepsilon|}
		  \int_{\textstyle B_g(x_i,r)}
		  \Big(
			  \frac{1}{2}
			  \lVert\nabla u_{\varepsilon}\rVert^2
			  + \frac{1}{\varepsilon^2}
		  W(u_\varepsilon)\Big)
		  \mathrm{d}v_g \\
		  & \ge
		  (1-\delta)\,
		  \sum_{i = 1}^n        
		  \liminf_{\varepsilon \to 0}
		  \frac{1}{\pi|\log\varepsilon|}
		  \int_{\textstyle B_g(x_i,r)}
		  \Big(
			  \frac{1}{2}
			  \lVert\nabla u_{\varepsilon}\rVert^2
			  + \frac{1}{\varepsilon^2}
		  W(u_\varepsilon)\Big)
		  \mathrm{d}v_g
		  \\
		  & \ge (1-\delta) \sum_{i = 1}^n \|\nu_i\|(U_i)
		  \ge 
		  (1- \delta) \|\partial T\|(M) - \delta.
	\end{align*}
	Letting $\delta \to 0$ gives~\eqref{eq:Gamma-liminf}.
	The same argument applies to any open subset $U\subset M$,
	which proves \eqref{eq:Gamma-liminfLoc}.
\end{proof}

\section{Photography map}
\label{sec:photography}

In this section, we construct the
photography map $f_{\varepsilon,\phi}\colon \Sigma \to \mathcal{X}_{\phi}$
and we estimate the smallest sublevel set
of $E_{\varepsilon,\phi}$ that contains its entire image.

For any $p \in \Sigma$, we denote by $D_X(p,r) \subset M$ 
the geodesic $(N-1)$–dimensional open disk 
centered at $p$, orthogonal to $X(p)$,
and with radius $r\in (0,r_0)$,
where we recall that $r_0$ has been fixed and it is 
less than (half of) the injectivity radius of $M$.
More precisely, we define
\begin{equation}
	\label{eq:def-SigmaX}
	D_X(p,r) \coloneqq \left\{
		\exp_{p} v \in M
		: v \in T_pM,\ g(v, X(p)) = 0,
	\ \norm{v} < r \right\}.
\end{equation}
\begin{figure}[ht]
	\centering
	\begin{overpic}[width=0.6\linewidth]{./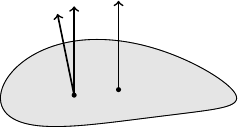}
		\put(53,16){$p$}
		\put(53,48){$X(p)$}
		\put(35,15){$q$}
		\put(35,48){$X(q)$}
		\put(14,45){$\nu(q)$}
		\put(-17,20){$D_X(p,r)$}
		\put(80,40){$M$}
	\end{overpic}
	\caption{A small geodesic disk $D_X(p,r)$ orthogonal to $X$ in a $3$--dimensional ambient manifold $M$.}
	\label{fig:smalldisk}
\end{figure}
This construction is illustrated in Figure~\ref{fig:smalldisk}.

\begin{lemma}
	\label{lem:rightOrthoDisk}
	There exists $\phi_0 > 0$ such that
	for any $\phi \in [0,\phi_0]$
	and any $p \in \Sigma$, there exists a unique radius $r(p,\phi) \in [0,r_0]$ 
	such that the flux of $X$
	through the geodesic disk $D_X(p,r(p,\phi))$ 
	is equal to $\phi$, i.e.,
	\begin{equation}
		\label{eq:fluxCondition}
		\llbracket D_X(p,r(p,\phi)) \rrbracket (\star X)
		=
		\int_{D_X(p,r(p,\phi))} g(X,\nu) \, \mathrm{d}\sigma = \phi,
	\end{equation}
	where $\nu_p(x)$ is the continuous unit vector
	field orthogonal to 
	$D_X(p,r(p,\phi))$ such that $\nu_p(p) = X(p)$ 
	$($cf.~\eqref{eq:maxXone}$)$,
	and $\mathrm{d}\sigma$ denotes the induced volume form on 
	$D_X(p,r(p,\phi))$.
	Moreover, the function 
	$r \colon  \Sigma \times [0,\phi_0] \to [0,r_0]$
	is continuous and 
	there exists a constant $C_X > 0$
	such that 
	\begin{equation}
		\label{eq:rpphi-estimate}
		\frac{1}{C_X}
		\phi^{1/N-1}
		\le r(p,\phi) \le 
		C_X\,
		\phi^{1/N-1},
		\qquad \forall p \in \Sigma.
	\end{equation}
\end{lemma}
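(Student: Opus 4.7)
The plan is to introduce the auxiliary flux function
\[
    F\colon \Sigma\times[0,r_0]\to\mathbb{R},
    \qquad F(p,r):=\int_{D_X(p,r)}g(X,\nu_p)\,\mathrm{d}\sigma,
\]
and to show that, uniformly in $p\in\Sigma$, it admits the asymptotic expansion $F(p,r)=\omega_{N-1}\,r^{N-1}\bigl(1+o(1)\bigr)$ as $r\to 0$, where $\omega_{N-1}$ is the volume of the Euclidean unit ball in $\mathbb{R}^{N-1}$. Granting this expansion and the strict positivity of $\partial_r F$ for small $r$, the statement will follow from the implicit function theorem together with a compactness argument on $\Sigma$. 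Observe that $F$ is jointly continuous: the map $p\mapsto D_X(p,r)$ depends smoothly on $p$ through $\exp_p$ and the orthogonal complement of $X(p)$, and $X$ is smooth, so the continuous extension of $\nu_p$ with $\nu_p(p)=X(p)$ depends continuously on $(p,q)\in\{(p,q):q\in D_X(p,r_0)\}$.

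First I would pull back the defining integral via the exponential chart. Writing $V_p=\{v\in T_pM:g(v,X(p))=0\}$ and $q=\exp_p(v)$, the induced area element satisfies $\exp_p^{\star}\mathrm{d}\sigma=(1+O(|v|^2))\,\mathrm{d}v$ (Jacobi-field / normal-coordinate expansion), while smoothness of $X$ and continuity of $\nu_p$ give $g(X(q),\nu_p(q))=1+L_p(v)+O(|v|^2)$ for a linear functional $L_p$ depending smoothly on $p$. Integrating over the symmetric domain $V_p\cap B_r$, the odd term $L_p(v)$ drops out, yielding
\[
    F(p,r)=\omega_{N-1}\,r^{N-1}+O(r^{N+1}),
\]
with the $O(\cdot)$ uniform in $p$ by compactness of $\Sigma$ and the smoothness of $X$ and $\exp$. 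Differentiation in $r$ (by the co-area formula) gives $\partial_r F(p,r)=\int_{\partial D_X(p,r)}g(X,\nu_p)\,\mathrm{d}\mathcal{H}^{N-2}=(N-1)\omega_{N-1}r^{N-2}\bigl(1+o(1)\bigr)$, which is strictly positive for $r$ below a uniform threshold $r_1\in(0,r_0)$ depending only on $M,g,X$.

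Next I would set $\phi_0:=\tfrac{1}{2}\min_{p\in\Sigma}F(p,r_1)>0$, which is positive by continuity and compactness. For $\phi\in[0,\phi_0]$ the uniform lower bound $F(p,r_1)\geq 2\phi_0$ combined with $F(p,0)=0$ and the strict monotonicity of $F(p,\cdot)$ on $[0,r_1]$ yields, via the intermediate value theorem, a unique $r(p,\phi)\in[0,r_1]$ solving $F(p,r(p,\phi))=\phi$; strict monotonicity of $F(p,\cdot)$ on $[0,r_1]$ together with $F(p,r)\geq F(p,r_1)>\phi$ for $r\in[r_1,r_0]$ upgrades this to uniqueness in $[0,r_0]$. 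Continuity of $r(\cdot,\cdot)$ is then a standard subsequence argument: any limit point of $r(p_n,\phi_n)$ along $(p_n,\phi_n)\to(p,\phi)$ must satisfy the flux equation at $(p,\phi)$, hence coincides with $r(p,\phi)$ by uniqueness. Finally, inverting the uniform expansion $\phi=\omega_{N-1}r(p,\phi)^{N-1}\bigl(1+o(1)\bigr)$ gives the two-sided bound in \eqref{eq:rpphi-estimate} with $C_X$ depending only on $N$, $\omega_{N-1}$ and the uniform error term (shrinking $\phi_0$ if necessary).

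The main technical obstacle is the uniformity of the asymptotic expansion of $F(p,r)$ across $\Sigma$: the first-order angular defect between $\nu_p(q)$ and $X(q)$ must be controlled in tandem with the first-order deviation of $\exp_p$ from a linear isometry and with the variation of $X$ along the disk, all with error constants independent of $p\in\Sigma$. Compactness of $\Sigma$ and smoothness of $X$ reduce this to a standard Taylor estimate in normal coordinates, but the symmetry argument that cancels the $O(r^{N})$ term (and thus identifies the correct leading constant $\omega_{N-1}$) must be carried out carefully, since it is this cancellation that prevents the bounds in \eqref{eq:rpphi-estimate} from degenerating.
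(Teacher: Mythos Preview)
Your proposal is correct and follows essentially the same route as the paper: define the flux map $F(p,r)$, show it is strictly increasing for small $r$ uniformly in $p\in\Sigma$, set $\phi_0$ as a positive minimum over $\Sigma$, invoke the intermediate value theorem for existence and monotonicity for uniqueness, and invert the growth estimate to obtain \eqref{eq:rpphi-estimate}.

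The one notable difference is the level of precision. You carry out a second-order Taylor expansion in normal coordinates and invoke the odd-symmetry cancellation to push the error from $O(r^{N})$ to $O(r^{N+1})$; the paper instead uses the cruder observation that, by continuity, $g(X,\nu_p)\ge \tfrac12$ on $D_X(p,\delta_p)$ for some $\delta_p>0$, which already yields $\partial_r F(p,r)\ge \tfrac{\omega_{N-2}}{2}r^{N-2}$ and, after integration and compactness, the two-sided bound. In particular, your final remark that the symmetry cancellation is what ``prevents the bounds in \eqref{eq:rpphi-estimate} from degenerating'' is an overstatement: those bounds only require $F(p,r)\asymp r^{N-1}$, which follows from $\tfrac12\le g(X,\nu_p)\le 1$ without any cancellation. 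The finer expansion you compute is closer in spirit to what is needed later in Lemma~\ref{lem:estimate-partialDphi}, not here.

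One small point: your argument for uniqueness on all of $[0,r_0]$ asserts $F(p,r)\ge F(p,r_1)$ for $r\in[r_1,r_0]$, but you have only established $\partial_r F>0$ on $[0,r_1]$. The paper sidesteps this by simply declaring $r(p,\phi)\in[0,\delta]$ with $\delta\le r_0$; either restrict the codomain in the same way, or note that $g(X,\nu_p)>0$ persists on the larger disk (which does require a separate uniform estimate).
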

\begin{proof}
	By construction, for any $p \in \Sigma$
	we have $g(\nu(p),X(p)) = \norm{X(p)} = 1$;
	since $D_X(p,r)$ is a smooth submanifold for any $r \in (0,r_0)$
	$\nu$ and $X$ is continuous by hypothesis,
	there exists $\delta_p \in (0,r_0)$
	such that
	\[
		g\big(\nu_p(x),X(x)\big) \ge \frac{\norm{X(p)}}{2}
		\ge \frac{1}{2},
		\qquad \forall x \in D_X(p,\delta_p),
	\]
	so the map $r \mapsto \llbracket D_X(p,r) \rrbracket(\star X)$
	is strictly increasing in $(0,\delta_p)$.
	Indeed, by using also the compactness of $\Sigma$,
	there exists a dimensional constant $\omega_{N-2} > 0$
	such that
	\begin{equation}
		\label{eq:rightOrthoDisk-proof1}
		\frac{\mathrm{d}}{\mathrm{d}r} 
		\big(\llbracket D_X(p,r)\rrbracket(\star X)\big)
		\ge  \frac{\omega_{N-2}}{2} r^{N-2},
		\qquad \text{for a.e. } r \in [0,\delta_p].
	\end{equation}
	Since $\Sigma$ is a compact set, let us fix $\delta = \min_{p \in \Sigma} \delta_p > 0$,
	and let
	\[
		\phi_0 = \frac{1}{2}\min_{p \in \Sigma}
		\llbracket D_X(p,\delta) \rrbracket(\star X).
	\]
	By construction, for any $\phi \in [0,\phi_0]$
	and for any $p \in \Sigma$ there exists 
	a unique
	$r(p,\phi) \in [0,\delta]$
	such that~\eqref{eq:fluxCondition} holds.
	As all the construction involves only continuous functions,
	such a function $r\colon \Sigma \times [0,\phi_0] \to [0,r_0]$
	is continuous.
	Finally, integrating~\eqref{eq:rightOrthoDisk-proof1}
	and using again the compactness of $M$ and $\Sigma$,
	we obtain the existence of a constant $C_X > 0$
	such that~\eqref{eq:rpphi-estimate} holds.
\end{proof}

Through Lemma~\ref{lem:rightOrthoDisk} and
the limsup property of the $\Gamma$-convergence of the 
Ginzburg-Landau functional,
for any $\phi \in (0,\phi_0)$
and $\varepsilon > 0$, we define
the photography map $f_{\varepsilon,\phi}(p)$
as an element of the recovery sequence of the 
$(N-2)$-dimensional current $\partial D_X(p,r(p,\phi))$.
We begin with the following intermediate step,
where we construct the recovery sequence
without enforcing the flux constraint,
which is satisfied only in the limit.
\begin{lemma}
	\label{lem:photoConstruction-hat}
	For any $\phi \in (0,\phi_0)$, there exists 
	$\varepsilon_0 = \varepsilon_0(\phi) > 0$
	and a one-parameter 
	family of continuous functions 
	\[		
		(\hat{f}_{\varepsilon,\phi})_{\varepsilon \in (0,\varepsilon_0)} \colon \Sigma \to W^{1,2}(M,\mathbb{C})
	\]
	such that
	\begin{equation}
		\label{eq:photo-almost-flux}
		\lim_{\varepsilon \to 0} \Phi_X\big(\hat{f}_{\varepsilon,\phi}(p)\big)= \phi,
	\end{equation}
	and 
	\begin{equation}
		\label{eq:limEnergyPhoto-hat}
		\lim_{\varepsilon \to 0} E_{\varepsilon,\phi}(\hat{f}_{\varepsilon,\phi}(p))
		= \mathcal{H}_g^{N-2}\big(\partial D_X(p,r(p,\phi))\big).
	\end{equation}
\end{lemma}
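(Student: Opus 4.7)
The plan is to take $\hat{f}_{\varepsilon,\phi}(p)$ to be the classical recovery sequence for the $(N-2)$-dimensional cycle $\Gamma_p := \partial D_X(p,r(p,\phi))$, namely a standard planar Ginzburg--Landau vortex profile fibered over $\Gamma_p$ via Fermi coordinates (cf.~\cite{AlbertiBaldoOrlandi-Indiana2005, Jerrard1999, JerrardSoner2002}). The only non-standard point is that the construction must be continuous in $p\in\Sigma$; this will follow from a canonical choice of normal frame.

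First I would choose a continuous orthonormal frame for the rank-two normal bundle of $\Gamma_p$: at each $q\in\Gamma_p$ let $n_1(p,q)$ be the unit tangent at $q$ of the radial geodesic inside $D_X(p,r(p,\phi))$ joining $q$ to $p$, and $n_2(p,q):=\nu_p(q)$ be the unit normal field of Lemma~\ref{lem:rightOrthoDisk}. By compactness of $\Sigma$ and continuity of $r(\cdot,\phi)$ there is a uniform radius $\rho>0$ such that the Fermi map $\Phi_p(q,y):=\exp_q(y^1 n_1(p,q)+y^2 n_2(p,q))$ is a diffeomorphism of $\Gamma_p\times D^2_\rho$ onto a tubular neighbourhood $T_\rho(\Gamma_p)$ for every $p\in\Sigma$. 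On this tube I would set
\[
\hat{f}_{\varepsilon,\phi}(p)(x) \;=\; \eta(|y|)\, U_\varepsilon(|y|)\,\frac{y^1+iy^2}{|y|} + \bigl(1-\eta(|y|)\bigr),\qquad x=\Phi_p(q,y),
\]
with $U_\varepsilon(s):=\min(s/\varepsilon,1)$ the optimal radial profile and $\eta$ a smooth cutoff supported in $[0,\rho)$, and extend by the constant $1$ outside $T_\rho(\Gamma_p)$. Continuity of $p\mapsto\hat{f}_{\varepsilon,\phi}(p)$ in $W^{1,2}(M,\mathbb{C})$ then follows from the joint continuity of $\Phi_p$, $n_1$, $n_2$ and $r$.

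The energy estimate is obtained by Fubini in Fermi coordinates: the volume element factorises as $(1+O(|y|))\,\de\sigma(q)\,\de y$, so the leading contribution to $E_\varepsilon(\hat{f}_{\varepsilon,\phi}(p))$ is $\mathcal{H}_g^{N-2}(\Gamma_p)$ times the normalised planar vortex energy $\frac{1}{\pi|\log\varepsilon|}\int_{D^2_\rho}\bigl(\tfrac12\|\nabla v_\varepsilon\|^2+\tfrac1{\varepsilon^2}W(v_\varepsilon)\bigr)\,\de y$ of $v_\varepsilon(y):=U_\varepsilon(|y|)(y^1+iy^2)/|y|$, which is well-known to tend to $1$ as $\varepsilon\to 0$. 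This gives the upper bound in~\eqref{eq:limEnergyPhoto-hat}; the matching lower bound is a direct application of Proposition~\ref{prop:Gamma-liminf}. For the flux, the same slicing shows that $\star J(\hat{f}_{\varepsilon,\phi}(p))$ converges in flat norm to $\pi\llbracket\Gamma_p\rrbracket$ with the orientation induced by $n_1\wedge n_2$, which by construction matches that of $\partial\llbracket D_X(p,r(p,\phi))\rrbracket$. Then~\eqref{eq:flux-Jacobian} combined with $\de Y=\star X$ yields
\[
\pi\,\Phi_X(\hat{f}_{\varepsilon,\phi}(p)) \;\longrightarrow\; \pi\,\partial\llbracket D_X(p,r(p,\phi))\rrbracket(Y) \;=\; \pi\,\llbracket D_X(p,r(p,\phi))\rrbracket(\star X) \;=\; \pi\phi,
\]
by the defining property of $r(p,\phi)$ in Lemma~\ref{lem:rightOrthoDisk}, which proves~\eqref{eq:photo-almost-flux}.

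The hard part will be the uniform-in-$p$ bookkeeping of the curvature corrections to the Fermi metric on $T_\rho(\Gamma_p)$ and of the cutoff errors near $|y|=\rho$, so that the leading-order energy is measured with respect to the flat metric on normal slices and the limit is exactly $\mathcal{H}_g^{N-2}(\Gamma_p)$. Compactness of $\Sigma$ together with the continuity of $r(p,\phi)$ supplied by Lemma~\ref{lem:rightOrthoDisk} ensures that every estimate is uniform in $p$, so that the threshold $\varepsilon_0(\phi)$ can indeed be chosen independently of $p\in\Sigma$; the remaining steps are a routine transcription of the standard planar recovery sequence to the Riemannian setting.
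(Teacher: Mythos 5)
Your ansatz has a genuine gap: the slice-wise cutoff to the constant $1$ is incompatible with the degree-one winding, and the way your explicit formula resolves this conflict destroys the energy estimate. On each normal slice $\{q\}\times D^2_\rho$ your map is a degree-one vortex near $y=0$ but identically $1$ on $|y|=\rho$; since a continuous $\mathbb{S}^1$-valued map on a planar annulus cannot change its winding number, the interpolation $\eta(|y|)U_\varepsilon(|y|)e^{i\arg y}+(1-\eta(|y|))$ is forced off the unit circle on a region whose size does not shrink with $\varepsilon$. Concretely, where $U_\varepsilon=1$ one has $|\hat f_{\varepsilon,\phi}(p)|^2=1-2\eta(1-\eta)(1-\cos\theta)$, so on the fixed open set where, say, $\eta\in[1/4,3/4]$ and $\theta\in[\pi/2,3\pi/2]$ (a set of positive volume $V_0>0$, independent of $\varepsilon$, fibered over $\partial D_X(p,r(p,\phi))$) the modulus is bounded away from $1$ and $W(\hat f_{\varepsilon,\phi}(p))\ge c>0$. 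The potential term of the energy is then at least $c\,V_0/(\pi\varepsilon^2|\log\varepsilon|)\to+\infty$, so $E_{\varepsilon,\phi}(\hat f_{\varepsilon,\phi}(p))$ blows up instead of converging to $\mathcal{H}^{N-2}_g\big(\partial D_X(p,r(p,\phi))\big)$ as required by \eqref{eq:limEnergyPhoto-hat}. The same fixed annular region carries an $\varepsilon$-independent diffuse Jacobian (your map even vanishes on a second set, where $\eta=1/2$ and $\theta=\pi$, around which it winds with opposite sign), so the claimed flat convergence of $\star J(\hat f_{\varepsilon,\phi}(p))$ to $\pi\llbracket\partial D_X(p,r(p,\phi))\rrbracket$, and with it the limit \eqref{eq:photo-almost-flux}, also fails in general; and the limsup you would need cannot be recovered from Proposition~\ref{prop:Gamma-liminf}, which only gives the opposite inequality.

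The repair is exactly the point of the paper's construction (following \cite{BOS2004}): the vortex cannot be unwound within each normal disk, it must be unwound across the spanning disk $D_X(p,r(p,\phi))$. The paper works in cylindrical coordinates $(x,r,\theta)$ adapted to $X(p)$ and uses the dipole map \eqref{eq:def-w_R} corrected by a harmonic phase, so that the resulting map has modulus $1$ everywhere except in the tube $T_{\varepsilon,\phi}(p)$ of volume $O(\varepsilon^2)$ around the ring, winds once around $\partial D_X(p,r(p,\phi))$, and equals a constant only outside the ball $B_g(p,2r(p,\phi))$ enclosing the whole disk — not on the boundary of each normal slice. With that structure the potential term is $O(1/|\log\varepsilon|)$, the gradient term gives $\pi|\log\varepsilon|\,\mathcal{H}^{N-2}_g(\partial D_X(p,r(p,\phi)))+O(1)$ by the standard two-dimensional vortex estimate, and the Jacobian genuinely concentrates on the ring, which yields \eqref{eq:photo-almost-flux} (the exact flux constraint is only restored afterwards, in Lemma~\ref{lem:photoConstruction}, by a small phase perturbation). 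Your frame/Fermi-coordinate setup, the continuity-in-$p$ argument via Lemma~\ref{lem:rightOrthoDisk} and compactness of $\Sigma$, and the Fubini-type energy bookkeeping are all fine and parallel the paper, but they do not compensate for the defective cutoff; you would need to replace your profile by a dipole-type (solid-angle) phase as above before those estimates apply.
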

\begin{figure}[ht]
	\centering
	\begin{subfigure}[t]{0.46\textwidth}
		\centering
		\begin{overpic}[width=\linewidth]{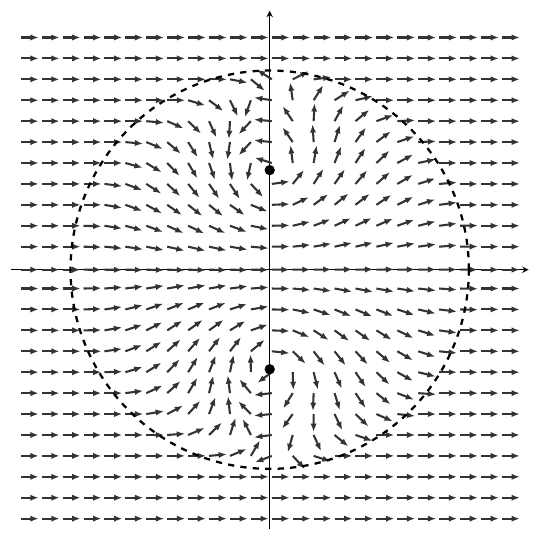}
			\put(97,51){\footnotesize $\operatorname{Re}\,z$} 
			\put(52,98){\footnotesize $\operatorname{Im}\,z$} 
		\end{overpic}
		\caption{The dipole
			vortex map $\omega_{p,\phi}$ given in~\eqref{eq:def-w_R},
		where the singular points are at $(0,\pm r(p,\phi))$.}
		\label{fig:photo1}
	\end{subfigure}\hfill
	\begin{subfigure}[t]{0.46\textwidth}
		\centering
		\begin{overpic}[width=\linewidth]{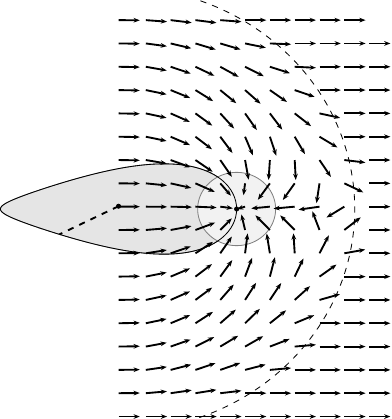}
			\put(24,52){$p$}     
			\put(-4,32){\small $D_X(p,r(p,\phi))$}
			\put(65,56.2){\small $ T_{\varepsilon}$}
			\put(5,48){\small $r(p,\phi)$}
		\end{overpic}
		\caption{Definition of the photography map as a tiny
			vortex ring in a tubular neighborhood $T_\varepsilon$
		of $\partial D_X(p,r(p,\phi)).$}
		\label{fig:photo2}
	\end{subfigure}
	\caption{Construction of the photography map:
		using a cylindrical coordinate system in a local chart,
		we define $f_{\varepsilon,\phi}(p)$ by transporting
		the dipole vortex map $\omega_{p,\phi}$ onto $M$.
		The singularity, located at a distance $r(p,\phi)$,
		is shifted to the boundary $\partial D_X(p,r(p,\phi))$
		and then removed by a linear smoothing.
	}
	\label{fig:recovery-seq}
\end{figure}
\begin{proof}
	We rely on the standard vortex ring construction of
	the recovery sequence for the Ginzburg–Landau functional
	(cf. \cite{BOS2004}; see also \cite[Prop.~16]{dephilippis-pigati}).
	The two differences with respect to the cited constructions are that we work on a Riemannian manifold and that we have to take into account the 
	flux constraint.

	Since the photography map will be constant outside $B_g(p,2r(p,\phi))$ and 
	$2\,r(p,\phi)<2r_0<\operatorname{inj}(M)$,
	we can work in a single normal chart via the exponential map.
	To this end, choose a linear isometry $F:\mathbb{R}^N\to T_pM$
	such that $F(e_1) = X(p)$, and use 
	spherical coordinates $(r,\theta) \in [0,\infty)\times\mathbb S^{N-2}$
	for the remaining variables.
	Then, setting  $U\coloneqq \big\{(x,r,\theta) \in \mathbb{R} \times [0,+\infty) \times \mathbb{S}^{N-2}:\; x^2+r^2< 4\, r_0^2\big\}$,
	we define the map $\varphi\colon U \to M $ as 
	\[
		\varphi(x,r,\theta) = \exp_p\big(F(x e_1 + r\, \theta)\big).
	\]
	By construction, this map is a diffeomorphism between $U$
	and $B_g(p,2r_0)$.
	In this local chart, $\partial D_X(p,r(p,\phi))$
	corresponds to
	$\{(0,r(p,\phi),\theta): \theta \in \mathbb{S}^{N-2} \} \subset U$.
	In what follows, we implicitly identify points of $B_g(p,2r_0)$
	with their coordinates $(x,r,\theta)$.

	We are going to follow the same lines of~\cite{BOS2004},
	but we have to consider also the deformations given by
	the curvature of the underlying manifold.
	As the photography map $f_{\varepsilon,\phi}(p)$
	will be independent on $\theta$, 
	we identify $(x,r)$ with the complex variable $z=x+i\,r$ 
	and define the dipole vortex map 
	$\omega_{p,\phi}:\mathbb{C}\to\mathbb{C}$ by
	\begin{equation}
		\label{eq:def-w_R}
		\omega_{p,\phi}(z) \coloneqq
		\frac{z- i\, r(p,\phi)}{|z - i\, r(p,\phi)|}
		\frac{z +  i\, r(p,\phi)}{|z + i\, r(p,\phi)|}
		e^{\displaystyle i\vartheta(z)},
	\end{equation}
	where $\vartheta_p: \mathbb{C}\to \mathbb{R}$
	is a real harmonic function such that 
	$\omega_{p,\phi} \equiv 1$ on $\partial B_{2r(p,\phi)}$.
	A graphic representation of this function is given in Figure~\ref{fig:photo1}.
	In the half–plane $r\ge0$,
	$\omega_{p,\phi}$ has a single singularity at $z=i\,r(p,\phi)$, 
	which corresponds to the points of $\partial_X D(p,r(p,\phi))$.
	We remove this singularity by using a linear smoothing 
	in a small disk of radius $\varepsilon$;
	hence, let us set
	\begin{multline*}
		T_{\varepsilon,\phi}(p) \coloneqq 
		\big\{(x,\rho,\theta) \in U:
		x^2 + (r - r(p,\phi))^2 \le \varepsilon^2 \big\}\\
		=
		\big\{q \in M: \operatorname{dist}_g(q,\partial D_X(p,r(p,\phi))\le \varepsilon\big\}.
	\end{multline*}
	Therefore, for any $\varepsilon < r(p,\phi)/2$,
	we define the map $\hat{f}_{\varepsilon,\phi}(p)\colon M \to \mathbb{C}$
	as follows:
	\begin{equation}
		(\hat{f}_{\varepsilon,\phi}(p))(q)
		= \begin{cases}
			1, & \mbox{if }\operatorname{dist}_g(q,p) \ge 2\, r(p,\phi),\\
			\varepsilon^{-1}
			\big|x + i \big(r -r(p,\phi)\big) \big|\,
			\omega_r(x + i \rho), & \mbox{if }
			q \in T_{\varepsilon,\phi}(p),\\
			\omega_r(x + i r), & \mbox{otherwise},
		\end{cases}
	\end{equation}
	where $q=\varphi(x,\rho,\theta)$ in the last two cases. 
	A graphic representation of this map is given in Figure~\ref{fig:photo2}
	(for a fixed $\theta$).
	By compactness of $\Sigma$ and continuity of $p\mapsto r(p,\phi)>0$,
	set
	\[
		\varepsilon_0 = \varepsilon_0(\phi)\coloneqq \frac{1}{2} \min_{p\in\Sigma} r(p,\phi) \;>\; 0,
	\]
	so that $\hat{f}_{\varepsilon,\phi}(p)$ is well defined for every $p\in\Sigma$ and
	$\varepsilon\in(0,\varepsilon_0(\phi))$.
	Since the whole above construction relies on continuous maps,
	$p \mapsto \hat{f}_{\varepsilon,\phi}(p)$ is a continuous map
	from $\Sigma$ to $W^{1,2}(M,\mathbb{C})$.

	Let us show~\eqref{eq:photo-almost-flux}.
	As $|\hat{f}_{\varepsilon,\phi}| \equiv 1$
	outside $T_{\varepsilon,\phi}(p)$,
	we have $J\big(\hat f_{\varepsilon,\phi}(p)\big)=0$
	there, and so we obtain
	\begin{equation}
		\label{eq:photo-Phi-hatf-estimate}
		\Phi_X\big(\hat f_{\varepsilon,\phi}(p)\big)
		= \frac{1}{\pi}
		\int_{\textstyle T_{\varepsilon,\phi}(p)} \langle J(\hat f_{\varepsilon,\phi}(p)),\, \star Y \rangle \, \mathrm{d}v_g.
	\end{equation}
	Since in $T_{\varepsilon,\phi}(p)$
	the function $\hat{f}_{\varepsilon,\phi}(p)$
	can be approximated by a vortex of degree 1
	(see also Figure~\ref{fig:photo2}),
	namely
	\[
		\hat{f}_{\varepsilon,\phi}(p)
		(x,r,\phi)
		\approx \frac{1}{\varepsilon} 
		\frac{ix - (r - r(p,\phi))}{|x - i(r - r(p,\phi))|}
		e^{ i\vartheta(r(p,\phi))},
	\]
	we obtain 
	\[
		J(\hat{f}_{\varepsilon,\phi}(p)) = \frac{1}{\varepsilon^2} \,\mathrm{d}x \wedge \mathrm{d}r
		+ O(1/\varepsilon),
		\qquad \mbox{in }T_{\varepsilon,\phi}(p).
	\]
	Hence, for any smooth test 2-form $Z \in \Omega^{2}(M)$
	we obtain
	\begin{multline*}
		\lim_{\varepsilon \to 0}
		\int_{\textstyle T_{\varepsilon,\phi}(p)}
		\langle J(\hat f_{\varepsilon,\phi}(p)),\, Z \rangle \, \mathrm{d}v_g
		=
		\lim_{\varepsilon \to 0}
		\frac{1}{\varepsilon^2}
		\int_{\textstyle T_{\varepsilon,\phi}(p)}
		\langle \mathrm{d}x \wedge \mathrm{d}r ,\,  Z \rangle \,
		\mathrm{d}v_g  \\
		= 
		\lim_{\varepsilon \to 0}
		\frac{\operatorname{vol}(T_{\varepsilon,\phi}(p))}{\varepsilon^2}
		\llbracket \partial_X D(p,r(p,\phi))\rrbracket
		(Z) 
		= 
		\pi \llbracket \partial_X D(p,r(p,\phi))\rrbracket
		(Z);
	\end{multline*}
	in other words
	\[
		\star J(\hat{f}_{\varepsilon,\phi}(p))
		\rightharpoonup \pi \llbracket \partial_X D(p,r(p,\phi))\rrbracket,
	\]
	in the sense of currents.  
	Therefore, by Stokes' Theorem
	and recalling the definition of $r(p,\phi)$ in Lemma~\ref{lem:rightOrthoDisk}, we have
	\[
		\lim_{\varepsilon \to 0} \Phi_X\big(\hat{f}_{\varepsilon,\phi}(p)\big)
		= \llbracket \partial D_X\big(p,r(p,\phi)\big)\rrbracket(Y) \\
		= \llbracket D_X(p,r(p,\phi)) \rrbracket (\star X)
		= \phi,
	\]
	so~\eqref{eq:photo-almost-flux} holds.

	It remains to prove~\eqref{eq:limEnergyPhoto-hat}.
	Since $\|\hat{f}_{\varepsilon,r}(p)\| = 1$
	outside $T_{\varepsilon,\phi}(p)$
	there we have $W(\hat{f}_{\varepsilon,r}(p)) \equiv 0$.
	Moreover, $\|\hat{f}_{\varepsilon,r}(p)\| \le 1$
	inside $T_{\varepsilon,\phi}(p)$
	and so $W(\hat{f}_{\varepsilon,r}(p))$
	is upper-bounded by a constant.
	As a consequence,
	\[
		\int_M W(\hat f_{\varepsilon,r})\,\mathrm{d}v_g
		= O\big(\mathcal{H}^N(T_{\varepsilon,\phi}(p))\big)
		= O(\varepsilon^2),
	\]
	and so we obtain
	\[
		\lim_{\varepsilon\to 0}
		\frac{1}{\pi\varepsilon^2|\log\varepsilon|}
		\int_M W(\hat{f}_{\varepsilon,r})\mathrm{d}v_g
		= \lim_{\varepsilon\to 0}\frac{1}{\pi|\log\varepsilon|} O(1)
		= 0,
	\]
	which means that potential term does not contribute to the limit in~\eqref{eq:limEnergyPhoto-hat}.
	For the gradient term, we notice that 
	$\| \nabla \hat{f}_{\varepsilon,r}(p)\| = \varepsilon^{-1}$
	in $T_{\varepsilon,\phi}(p)$,
	while it is less than a constant outside of it.
	Therefore, setting $\varepsilon_0 = \varepsilon_0(\phi)$,
	we have
	\[
		\lim_{\varepsilon \to 0}
		\frac{1}{2\pi|\log\varepsilon|}
		\int_{\textstyle M \setminus T_{\varepsilon_0,\phi}(p)}
		\|\nabla\hat{f}_{\varepsilon,r}(p)\|^2\, \mathrm{d}v_g = 0,
	\]
	and so it remains to estimate the following quantity:
	\[
		\lim_{\varepsilon\to 0}
		\frac{1}{2\pi|\log\varepsilon|}
		\int_{\textstyle T_{\varepsilon_0,\phi}(p)}
		\|\nabla\hat{f}_{\varepsilon,\phi}(p)\|^2\, \mathrm{d}v_g.
	\]
	Decomposing $\nabla\hat f_{\varepsilon,r}(p)=\nabla(|\hat f_{\varepsilon,r}(p)|)\,\omega_{p,\phi}
	+ |\hat f_{\varepsilon,r}(p)|\,\nabla\omega_{p,\phi}$, the term with $\nabla(|\hat f_{\varepsilon,r}(p)|)$
	is supported in the $T_{\varepsilon,\phi}(p)$,
	where its norm is $\varepsilon^{-1}$ and so
	and we have
	\[
		\int_{\textstyle T_{\varepsilon,\phi}(p)} \|\nabla(|\hat f_{\varepsilon,r}
		(p)|)\|^2 \, \mathrm{d}v_g
		= \frac{1}{\varepsilon^2} \mathcal{H}^{N-2}(T_{\varepsilon,\phi}(p))
		= O(1),
		\quad\mbox{as }\varepsilon \to 0,
	\]
	and it goes to zero after division by $|\log\varepsilon|$.
	The dominant contribution comes from $|\nabla\omega_{p,\phi}|^2$ on the annulus
	between radii $\varepsilon$ and $\varepsilon_0$ in the normal $(x,r)$–plane.
	By the standard 2D vortex estimate,
	\[
		\frac{1}{2} \int_{\textstyle   
		B(i \,r(p,\phi),\varepsilon_0) \setminus B(i \,r(p,\phi), \varepsilon) }
		\,|\nabla\omega_{p,\phi}|^2 \,\mathrm{d}x\, \mathrm{d}r 
		=\pi\,|\log\varepsilon| + O(1).
	\]
	Integrating over the tangential directions $\theta\in \mathbb{S}^{N-2}$ and using the normal–coordinate
	approximation of $\mathrm{d}v_g$, we obtain
	\[
		\lim_{\varepsilon \to 0}
		\frac{1}{2\pi|\log\varepsilon|}
		\int_{\textstyle T_{\varepsilon_0,\phi}(p)}
		\|\nabla\hat f_{\varepsilon,r}(p)\|^2\,\mathrm{d}v_g
		= \mathcal{H}^{N-2}\big(\partial D_X(p,r)\big),
	\]
	which yields \eqref{eq:limEnergyPhoto-hat}.
\end{proof}

\begin{lemma}
	\label{lem:photoConstruction}
	For any $\phi \in (0,\phi_0)$
	and $\varepsilon_0 = \varepsilon_0(\phi) $
	given by Lemma~\ref{lem:photoConstruction-hat},
	there exists and a one-parameter 
	family of continuous functions 
	\[		
		(f_{\varepsilon,\phi})_{\varepsilon \in (0,\varepsilon_0)} \colon \Sigma \to \mathcal{X}_{\phi}
	\]
	such that
	\begin{equation}
		\label{eq:limEnergyPhoto}
		\lim_{\varepsilon \to 0} E_{\varepsilon,\phi}(f_{\varepsilon,\phi}(p))
		= \mathcal{H}_g^{N-2}\big(\partial D_X(p,r(p,\phi))\big).
	\end{equation}
\end{lemma}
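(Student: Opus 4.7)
The plan is to modify the map $\hat f_{\varepsilon,\phi}(p)$ from Lemma~\ref{lem:photoConstruction-hat} by a continuous, $p$-dependent scalar factor so that the flux becomes exactly $\phi$ while the limiting energy is preserved. Since $\Phi_X(\lambda u)=\lambda^2\Phi_X(u)$, and since by~\eqref{eq:photo-almost-flux} together with the continuity of $p\mapsto \hat f_{\varepsilon,\phi}(p)$ in $W^{1,2}(M,\mathbb{C})$ and the compactness of $\Sigma$ one has $\Phi_X(\hat f_{\varepsilon,\phi}(p))\to\phi>0$ uniformly in $p\in\Sigma$, after possibly shrinking $\varepsilon_0(\phi)$ I would set
\[
\lambda_{\varepsilon,\phi}(p) := \sqrt{\frac{\phi}{\Phi_X(\hat f_{\varepsilon,\phi}(p))}},\qquad
f_{\varepsilon,\phi}(p) := \lambda_{\varepsilon,\phi}(p)\,\hat f_{\varepsilon,\phi}(p).
\]
Then $\Phi_X(f_{\varepsilon,\phi}(p))=\phi$, so $f_{\varepsilon,\phi}(p)\in\mathcal{X}_\phi$, and $p\mapsto f_{\varepsilon,\phi}(p)$ is continuous as a composition of continuous maps.

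The core task is to check that this rescaling does not spoil the energy convergence~\eqref{eq:limEnergyPhoto-hat}. The gradient term of $f_{\varepsilon,\phi}(p)$ scales exactly by $\lambda_{\varepsilon,\phi}(p)^2\to 1$, so it converges to $\mathcal{H}_g^{N-2}(\partial D_X(p,r(p,\phi)))$. For the potential term, I would split the integral between the tube $T_{\varepsilon,\phi}(p)$ and its complement. On $M\setminus T_{\varepsilon,\phi}(p)$, $|\hat f_{\varepsilon,\phi}(p)|\equiv 1$; since $W\ge 0$, $W\equiv 0$ on $\mathbb{S}^1$, and hence $\nabla W\equiv 0$ on $\mathbb{S}^1$, a $C^2$ Taylor expansion yields $W(\lambda\hat f)=O((\lambda-1)^2)$ uniformly, contributing at most $C(\lambda-1)^2\,\mathrm{vol}_g(M)$. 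Inside $T_{\varepsilon,\phi}(p)$, a uniform Lipschitz bound for $W$ on $\{|z|\le 2\}$ gives $|W(\lambda\hat f)-W(\hat f)|=O(|\lambda-1|)$, so the contribution is at most $\int_{T_{\varepsilon,\phi}(p)}W(\hat f)+O(|\lambda-1|\,\mathrm{vol}_g(T_{\varepsilon,\phi}(p)))$, and both terms are $O(\varepsilon^2)$, hence negligible after division by $\pi\varepsilon^2|\log\varepsilon|$.

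The main obstacle is to quantify $|\lambda_{\varepsilon,\phi}(p)-1|$ well enough that $(\lambda_{\varepsilon,\phi}(p)-1)^2=o(\varepsilon^2|\log\varepsilon|)$. For this I would inspect the proof of Lemma~\ref{lem:photoConstruction-hat}: on the tube $T_{\varepsilon,\phi}(p)$, $J(\hat f_{\varepsilon,\phi}(p))$ deviates from $\varepsilon^{-2}\mathrm{d}x\wedge\mathrm{d}r$ by $O(\varepsilon^{-1})$, and the tube has $N$-dimensional volume of order $\varepsilon^2$ times the length of the vortex ring; pairing this against the bounded $(N-2)$-form $Y$ upgrades~\eqref{eq:photo-almost-flux} to the quantitative estimate $|\Phi_X(\hat f_{\varepsilon,\phi}(p))-\phi|=O(\varepsilon)$, uniformly in $p\in\Sigma$. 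This gives $|\lambda_{\varepsilon,\phi}(p)-1|=O(\varepsilon)$, so the outside-the-tube potential contribution is $O(\varepsilon^2)/(\pi\varepsilon^2|\log\varepsilon|)=O(|\log\varepsilon|^{-1})\to 0$, and~\eqref{eq:limEnergyPhoto} follows from the gradient estimate. Should the quantitative rate not be readily extracted from the stated form of Lemma~\ref{lem:photoConstruction-hat}, an alternative would be to build $f_{\varepsilon,\phi}(p)$ directly by choosing a perturbed radius $r_\varepsilon(p)$ close to $r(p,\phi)$ via the intermediate value theorem so that the flux of $\hat f_{\varepsilon,r_\varepsilon(p)}(p)$ equals $\phi$ exactly, with continuity in $p$ ensured by strict monotonicity of the flux in $r$ for small $\varepsilon$.
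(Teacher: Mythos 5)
Your amplitude-rescaling construction is a genuinely different route from the paper's. The paper fixes the flux with a \emph{localized phase perturbation}: it sets $f_{\varepsilon,\phi}(p)=e^{i\,c_{p,\varepsilon}\tau_p}\hat f_{\varepsilon,\phi}(p)$ with $\tau_p$ supported where $|\hat f_{\varepsilon,\phi}(p)|\equiv 1$, so that the flux depends \emph{linearly} on the parameter $c_{p,\varepsilon}$ and can be matched exactly, the modulus (hence the potential term) is literally unchanged, and the only new energy contribution is the gradient cross-term $g(j(\hat f),\nabla\tau_{p,\varepsilon})$, which is killed by Cauchy--Schwarz together with the $|\log\varepsilon|$ normalization using only the \emph{qualitative} convergence \eqref{eq:photo-almost-flux} (one just needs $c_{p,\varepsilon}\to 0$). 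Your multiplicative correction $\lambda_{\varepsilon,\phi}(p)\hat f_{\varepsilon,\phi}(p)$ is simpler to write down (flux scales as $\lambda^2$), but it perturbs the modulus on all of $M$, so the potential term acquires a contribution of order $(\lambda-1)^2\mathrm{vol}_g(M)$ which must be beaten by $\varepsilon^2|\log\varepsilon|$. You correctly identify that this forces a \emph{quantitative} flux rate, which is not part of the statement of Lemma~\ref{lem:photoConstruction-hat}; your extraction of $|\Phi_X(\hat f_{\varepsilon,\phi}(p))-\phi|=O(\varepsilon)$ from its proof (Jacobian correction $O(\varepsilon^{-1})$ times tube volume $O(\varepsilon^2)$, plus the $O(\varepsilon)$ error from freezing $\star Y$ and the metric on the vortex ring) is plausible for fixed $\phi$, and your fallback of adjusting the radius by the intermediate value theorem is also a legitimate alternative. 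In short: the paper's phase trick is ``free'' given the previous lemma as a black box, while yours requires reopening its proof (or the radius adjustment), at the gain of a more elementary correction formula.

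Two points deserve more care. First, the well-definedness and continuity of $\lambda_{\varepsilon,\phi}$ on all of $\Sigma$ for a single threshold in $\varepsilon$ requires $\Phi_X(\hat f_{\varepsilon,\phi}(p))$ to be bounded away from $0$ \emph{uniformly} in $p$; pointwise convergence \eqref{eq:photo-almost-flux} plus continuity in $p$ and compactness of $\Sigma$ does not by itself yield uniform convergence, so you should either note that the constants in the construction depend continuously on $p$ only through $r(p,\phi)$, which is continuous and bounded away from zero on the compact set $\Sigma$ (giving uniform estimates), or phrase the $O(\varepsilon)$ flux bound uniformly in $p$ from the start. Second, you shrink $\varepsilon_0(\phi)$, which proves a formally slightly weaker statement than the lemma as written (family defined on all of $(0,\varepsilon_0)$ with the $\varepsilon_0$ of Lemma~\ref{lem:photoConstruction-hat}); this is harmless for the application, since the subsequent propositions shrink $\varepsilon$ anyway, but it is worth flagging. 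With these repairs your argument goes through and yields \eqref{eq:limEnergyPhoto}, the gradient term being handled exactly as you say (scaling by $\lambda^2\to1$) and the tube contribution to the potential being $O(\varepsilon^2)$ hence negligible.
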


\begin{proof}
	We will define 
	$f_{\varepsilon,\phi}\colon \Sigma \to \mathcal{X}_\phi \subset W^{1,2}(M,\mathbb{C})$
	as a small phase perturbation of 
	the map $\hat{f}_{\varepsilon,\phi}$
	given by Lemma~\ref{lem:photoConstruction-hat}.
	Indeed, as the flux error goes to zero,
	namely
	\[
		\lim_{\varepsilon \to 0}
		\Phi_X\big(\hat{f}_{\varepsilon,\phi}(p)\big)
		- \phi = 0,
	\]
	it suffices to add a small phase perturbation to fix it:
	in other words, we will define 
	$f_{\varepsilon,\phi}\colon \Sigma \to \mathcal{X}_\phi \subset W^{1,2}(M,\mathbb{C})$
	as 
	\begin{equation}
		\label{eq:photo-phase-perturbation}
		f_{\varepsilon,\phi}(p) = e^{\textstyle i \tau_{p,\varepsilon}}\, \hat{f}_{\varepsilon,\phi}(p),
	\end{equation}
	where the function $\tau_{p,\varepsilon} \colon M \to \mathbb{R}$
	will be such that 
	$\Phi_X\big(f_{\varepsilon,\phi}(p)\big) = \phi$.

	By a standard computation, we have
	\[
		j\big(f_{\varepsilon,\phi}(p)\big) = 
		j\big(\hat{f}_{\varepsilon,\phi}(p)\big) + 
		\norm{\hat{f}_{\varepsilon,\phi}(p)}^2 \nabla \tau_{p,\varepsilon},
	\]
	so by setting $\Phi_X(f_{\varepsilon,\phi}(p)) = \phi$
	we obtain the condition
	\begin{equation}
		\label{eq:perturbation-condition}
		\int_M \norm{\hat{f}_{\varepsilon,\phi}(p)}^2
		g(\nabla \tau_{p,\varepsilon}, X)
		\mathrm{d}v_g
		= \phi - \Phi_X\big(\hat{f}_{\varepsilon,\phi}(p)\big),
	\end{equation}
	where the right-hand side is infinitesimal
	as $\varepsilon \to 0$ by~\eqref{eq:photo-almost-flux}.
	Let us choose a continuous function
	$\tau_p \colon M \to \mathbb{R}$
	with compact support in a small ball centered at $p$
	and such that 
	\[
		\int_M g(\nabla\tau_p,X) \mathrm{d}v_g \ne 0.
	\]
	For instance, $\tau_p$
	can be chosen as the solution of the following 
	Dirichlet problem
	\[
		\begin{dcases}
			\Delta \tau_p = 
			- \operatorname{div}((\varepsilon_0-\operatorname{dist}_g(q,p))X(q)),&
			\mbox{in } B_g(p,\varepsilon_0),\\
			\tau_p = 0, &
			\mbox{in } \partial B_g(p,\varepsilon_0).\\
		\end{dcases}
	\]
	In this way, since $\norm{\hat{f}_{\varepsilon,\phi}(p)} \equiv 1$ in this small ball, we have that 
	\[
		c_{p,\varepsilon}
		= \frac{\phi - \Phi_X\big(\hat{f}_{\varepsilon,\phi}(p)\big)}{
		\int_M \norm{\hat{f}_{\varepsilon,\phi}(p)}^2  g(\nabla\tau_p,X) \mathrm{d}v_g} \in \mathbb{R}
	\]
	is well defined and the function 
	$\tau_{p,\varepsilon} = c_{p,\varepsilon}\tau_p$
	satisfies~\eqref{eq:perturbation-condition}.
	Moreover, by~\eqref{eq:photo-almost-flux} we have that
	$c_{p,\varepsilon} \to 0$ as $\varepsilon \to 0$,
	and so 
	\begin{equation}
		\label{eq:photo-estimate-nablatheta}
		\lim_{\varepsilon \to 0} \|\nabla \tau_{p,\varepsilon} \|_{L^\infty(M)} = 0.
	\end{equation}

	To obtain~\eqref{eq:limEnergyPhoto},
	it suffices to prove that the phase perturbation applied 
	to fulfill the flux constraint 
	does not affect the limit of the functional.
	As it is only a phase perturbation,
	the norm of $f_{\varepsilon,\phi}(p)$
	is always equal to the one of
	$\hat{f}_{\varepsilon,\phi}(p)$,
	and so the potential term vanishes in the limit,
	namely
	\begin{equation}
		\label{eq:dependence-eps-W}
		\lim_{\varepsilon \to 0}
		\frac{1}{\pi|\log\varepsilon|}
		\int_M \frac{1}{\varepsilon^2}W(f_{\varepsilon,\phi(p)})
		\,\mathrm{d}v_g 
		=
		\lim_{\varepsilon \to 0}
		\frac{1}{\pi|\log\varepsilon|}
		\int_M \frac{1}{\varepsilon^2}W(\hat{f}_{\varepsilon,\phi(p)})
		\,\mathrm{d}v_g 
		= 0.
	\end{equation}
	For the gradient part, we have
	\[
		\| \nabla f_{\varepsilon,\phi}(p) \|^2
		= \| \nabla \hat{f}_{\varepsilon,\phi}(p) \|^2
		+ \| \hat{f}_{\varepsilon,\phi}(p) \|^2
		\|\nabla \tau_{p,\varepsilon}\|^2 
		+ 2 g\big(j(\hat{f}_{\varepsilon,\phi}(p)),\nabla \tau_{p,\varepsilon}\big).
	\]
	Therefore, by using~\eqref{eq:photo-estimate-nablatheta}
	and recalling that $\| \hat{f}_{\varepsilon,\phi}(p) \| \le 1$,
	we have
	\[
		\lim_{\varepsilon \to 0}
		\frac{1}{2\,\pi|\log\varepsilon|}
		\int_M \| \hat{f}_{\varepsilon,\phi}(p) \|^2
		\|\nabla \tau_{p,\varepsilon}\|^2 
		\mathrm{d}v_g = 0,
	\]
	and, recalling also the estimate 
	for $\int_M \|\nabla \hat{f}_{\varepsilon,\phi}(p)\|^2 \mathrm{d}v_g$ as $\varepsilon \to 0$, 
	we obtain
	\begin{multline*}
		\lim_{\varepsilon \to 0}
		\bigg|
		\frac{1}{\,\pi|\log\varepsilon|}
		\int_M 
		g\big(j(\hat{f}_{\varepsilon,\phi}(p)),\nabla \tau_{p,\varepsilon}\big)
		\,\mathrm{d}v_g 
		\bigg| \\
		\le  \lim_{\varepsilon \to 0}
		\frac{1}{\,\pi|\log\varepsilon|}
		\int_M 
		\|\hat{f}_{\varepsilon,\phi}(p) \|
		\|\nabla \hat{f}_{\varepsilon,\phi}(p)\|
		\|\nabla \tau_{p,\varepsilon}\|\,
		\mathrm{d}v_g \\
		\le \lim_{\varepsilon \to 0}
		\frac{1}{\,\pi|\log\varepsilon|}
		\bigg(
			\int_M 
			\|\nabla \hat{f}_{\varepsilon,\phi}(p)\|^2
			\,\mathrm{d}v_g 
		\bigg)^{1/2}
		\bigg(
			\int_M 
			\|\nabla \tau_{p,\varepsilon}\|^2
			\,\mathrm{d}v_g 
		\bigg)^{1/2} \\
		=
		\lim_{\varepsilon \to 0}
		\frac{\Big(2\, \mathcal{H}^{N-2}(\partial D_X(p,r(p,\phi))\Big)^{1/2}}{\sqrt{\pi|\log\varepsilon|}}
		\bigg(
			\int_M 
			\|\nabla \tau_{p,\varepsilon}\|^2
			\,\mathrm{d}v_g 
		\bigg)^{1/2}
		= 0.
	\end{multline*}
	Using the last estimate, we then get
	\begin{multline*}
		\lim_{\varepsilon \to 0}
		\frac{1}{2\pi|\log\varepsilon|}
		\int_{M}
		\|\nabla f_{\varepsilon,r}(p)\|^2\,\mathrm{d}v_g \\
		= 
		\lim_{\varepsilon \to 0}
		\frac{1}{2\pi|\log\varepsilon|}
		\int_{M}
		\|\nabla\hat f_{\varepsilon,r}(p)\|^2\,\mathrm{d}v_g
		= \mathcal{H}^{N-2}\big(\partial D_X(p,r)\big),
	\end{multline*}
	and~\eqref{eq:limEnergyPhoto} holds.
\end{proof}

As a consequence of Lemma~\ref{lem:photoConstruction},
to estimate a small sublevel of the functional 
$E_{\varepsilon,\phi}$ that contains the image of the photography map $f_{\varepsilon,\phi}$,
we need an estimate of the measure of
$\partial D_X(p,r(p,\phi))$.
This estimate can be obtained by combining 
the asymptotic expansion for small volumes of the surface area of the geodesic balls together with 
an asymptotic expansion of the volume of 
$D_X(p,r(p,\phi))$ with respect to the flux parameter $\phi$.
\begin{lemma}
	\label{lem:estimate-partialDphi}
	There exists a continuous function 
	$\Upsilon \colon \Sigma \to \mathbb{R}$ 
	such that, for any $p \in \Sigma$, we have
	\begin{equation}
		\label{eq:estimate-partialD_X}
		\mathcal{H}^{N-2}_g \big(\partial D_X(p,r(p,\phi))\big)
		= \gamma_{N-1}\phi^{\frac{N-2}{N-1}}
		+ \Upsilon(p)\phi + \omega(\phi),
	\end{equation}
	where the remainder term $\omega(\phi)$
	is independent of $p \in \Sigma$
	and satisfies $\omega(\phi)/\phi \to 0$
	as $\phi \to 0$. 
\end{lemma}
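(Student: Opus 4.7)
The plan is to establish the expansion in two steps: first Taylor-expand both the flux integral~\eqref{eq:fluxCondition} defining $r(p,\phi)$ and the mass of the boundary $\partial D_X(p,r)$ as functions of the single small parameter $r$, and then invert the first expansion and substitute it into the second to obtain the expansion in powers of $\phi$.

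To carry this out, I would work in Riemannian normal coordinates centered at $p\in\Sigma$, chosen so that $X(p)$ is aligned with the last coordinate direction $e_N$. The disk $D_X(p,r)$ is then parametrised by $v\in B^{N-1}_r$ via $v\mapsto \exp_p(v,0)$. The key geometric inputs are the following two facts. First, since $p\in\Sigma$ maximises $\|X\|^2$, we have $g(\nabla_v X, X(p)) = 0$ for every $v \in T_pM$. Second, since $D_X(p,r)$ is the image under $\exp_p$ of a flat Euclidean disk in $T_pM$, every geodesic of $M$ starting at $p$ in a direction tangent to the disk remains in the disk; hence the second fundamental form of $D_X(p,r)$ vanishes at $p$, which forces $\nabla_v\nu(p)=0$ for every $v\in T_pM$ tangent to the disk. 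Together, these show that the tangential derivative of $g(X,\nu)$ at $p$ vanishes.

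Combining this with the standard expansion $g_{ij}(v) = \delta_{ij} - \tfrac{1}{3} R_{ikjl}(p) v^k v^l + O(\|v\|^3)$ of the metric in normal coordinates (which in turn gives an analogous expansion with no linear term for the induced area element on the disk), and using the $v\mapsto -v$ symmetry of the Euclidean ball to kill all odd-power contributions, I obtain
\begin{equation*}
\phi \;=\; \omega_{N-1}\, r^{N-1} + a(p)\, r^{N+1} + O(r^{N+3}),
\end{equation*}
where $\omega_{N-1}$ is the volume of the unit $(N-1)$-ball and $a\colon\Sigma\to\mathbb{R}$ is continuous. Inverting yields
\begin{equation*}
r(p,\phi) \;=\; (\phi/\omega_{N-1})^{1/(N-1)} + A(p)\,\phi^{3/(N-1)} + O(\phi^{5/(N-1)}),
\end{equation*}
with $A$ continuous. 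An entirely analogous computation on the boundary sphere $\exp_p(S^{N-2}_r)$ gives
\begin{equation*}
\mathcal{H}^{N-2}_g(\partial D_X(p,r)) \;=\; (N-1)\omega_{N-1}\, r^{N-2} + b(p)\, r^{N} + O(r^{N+2}),
\end{equation*}
with $b$ continuous. Substituting the expansion of $r(p,\phi)$ and observing that $(N-1)\omega_{N-1}^{1/(N-1)}$ coincides with the Euclidean isoperimetric constant $\gamma_{N-1}$, I arrive at
\begin{equation*}
\mathcal{H}^{N-2}_g(\partial D_X(p,r(p,\phi))) \;=\; \gamma_{N-1}\, \phi^{(N-2)/(N-1)} + B(p)\, \phi^{N/(N-1)} + O(\phi^{(N+2)/(N-1)}),
\end{equation*}
where $B\colon\Sigma\to\mathbb{R}$ is continuous. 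Since $N/(N-1) > 1$ for $N\geq 3$, the entire correction is $o(\phi)$ as $\phi\to 0$, and by compactness of $\Sigma$ all coefficients and remainder constants are uniform in $p$. One may then take $\Upsilon\equiv 0$ and package the remainder into a single function $\omega(\phi)$ with $\omega(\phi)/\phi \to 0$ uniformly on $\Sigma$ (or, equivalently, redistribute any continuous part of the correction into $\Upsilon(p)\phi$).

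The main technical obstacle will be the careful bookkeeping of the Taylor expansions of $g(X,\nu)$, of the unit normal $\nu$, and of the induced area element in normal coordinates, verifying in particular that the relevant linear-in-$v$ contributions vanish; this is precisely where the two geometric inputs highlighted above are used. Uniformity in $p$ then follows for free, since all coefficients depend continuously on the curvature tensor of $M$ and its first derivatives at $p$, together with $X$ and its first derivative at $p$.
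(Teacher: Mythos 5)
Your proof is correct, but it takes a genuinely different route from the paper's. The paper views $D_X(p,r)$ as an intrinsic geodesic ball of an $(N-1)$-dimensional submanifold, quotes the known expansion of the boundary area of small geodesic balls as a function of the enclosed Riemannian volume $v_p(\phi)$, and then compares $v_p(\phi)$ with the flux $\phi$ via the coarea formula using only the crude bound $1-g(X,\nu_p)=O(\mathrm{dist}_g(p,\cdot))$; this gives $v_p(\phi)=\phi+h(p)\phi^{\frac{N}{N-1}}+o(\phi^{\frac{N}{N-1}})$ and hence a possibly nonzero $\Upsilon(p)=\gamma_{N-1}\tfrac{N-2}{N-1}h(p)$. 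You instead Taylor-expand both the flux and the boundary area directly in the radius $r$ in normal coordinates and invert. Your two geometric inputs are correct: $d\big(\|X\|_g^2\big)(p)=0$ because $p$ is a global maximum, and the second fundamental form of the disk vanishes at $p$ because ambient geodesics tangent to the disk at $p$ stay in it (in fact, for the term $g(X(p),\nabla_v\nu_p)$ the second fundamental form is not even needed, since $\nu_p$ is a unit field and $\nu_p(p)=X(p)$); moreover, for the integrated expansions the $v\mapsto -v$ symmetry of the flat ball and sphere already kills the odd-order terms, while the absence of a linear term in the normal-coordinate metric handles the area element. The upshot is the sharper, uniform statement $\mathcal{H}^{N-2}_g\big(\partial D_X(p,r(p,\phi))\big)=\gamma_{N-1}\phi^{\frac{N-2}{N-1}}+O\big(\phi^{\frac{N}{N-1}}\big)$, so that one may take $\Upsilon\equiv 0$; this is acceptable since the lemma only asserts the existence of some continuous $\Upsilon$, and it is compatible with (indeed stronger than) the paper's conclusion and with how the lemma is used later, where only an upper bound with a $p$-independent $o(\phi)$ remainder is needed. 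Two points to make explicit in a full write-up: the identification $\gamma_{N-1}=(N-1)\omega_{N-1}^{1/(N-1)}$ uses that Almgren's sharp isoperimetric constant for $(N-2)$-dimensional integral boundaries in $\mathbb{R}^N$ is attained by the round sphere bounding a flat disk, which matches the paper's definition of $\gamma_{N-1}$; and the uniformity in $p$ of the remainders requires joint smoothness of $(p,v)\mapsto \big(g(X,\nu_p)\circ\exp_p\big)(v)$ on the compact set $\Sigma\times\{\|v\|\le r_0\}$ (the coefficients involve second derivatives of $X$ and of the normal field, not only first derivatives), which indeed holds by compactness. What your approach buys is a self-contained argument with a sharper remainder; what the paper's buys is the avoidance of coordinate computations by invoking the known geodesic-ball expansion.
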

\begin{proof}
	For $r \in (0,r_0)$,
	consider the disks $D_X(p,r)$
	as geodesics balls of an
	$(N-1)$--dimensional submanifold of $M$.
	We denote by 
	$v_p(\phi)$ the $(N-1)$--dimensional volume
	of $D_X(p,r(p,\phi))$, hence
	\[
		v_p(\phi) = 
		\mathcal{H}^{N-1}_g\big(D_X(p,r(p,\phi))\big)
		=
		\int_{D_X(p,r(p,\phi))} 1 \, \mathrm{d}\sigma,
	\]
	where $\mathrm{d}\sigma$ is the induced volume form.
	By the asymptotic expansion of surface area
	as a function of the enclosed volume 
	for small geodesic balls
	(see, e.g.,~\cite[Lemma 3.10]{Nardulli2009-AGAG}),
	we obtain 
	\[
		\mathcal{H}^{N-2}_g \big(\partial D_X(p,r(p,\phi))\big)
		= \gamma_{N-1}v_p(\phi)^{\frac{N-2}{N-1}}
		+ a_p v_p(\phi)^{\frac{N}{N-1}} + o\big(v_p(\phi)^{\frac{N}{N-1}}\big),
	\]
	where $a_p \in \mathbb{R}$ depends on the scalar curvature of the submanifold.
	Therefore, to prove~\eqref{eq:estimate-partialD_X}
	it suffices to show the existence of a 
	continuous functions $h\colon \Sigma \to \mathbb{R}$
	such that
	\begin{equation}
		\label{eq:expansion-vphi}
		v_p(\phi) = \phi + h(p) \phi^{\frac{N}{N-1}} +
		o(\phi^{\frac{N}{N-1}}).        
	\end{equation}
	Indeed, combining the last two asymptotic expansions
	and using the Taylor series of power
	we get
	\begin{multline*}
		\mathcal{H}^{N-2}_g \big(\partial D_X(p,r(p,\phi))\big)
		= \gamma_{N-1}
		\phi^{\frac{N-2}{N-1}}
		\big(1 + h(p)\phi^{\frac{1}{N-1}}\big)^{\frac{N-2}{N-1}}
		+ o(\phi) \\
		=
		\gamma_{N-1}\phi^{\frac{N-2}{N-1}}
		+ \gamma_{N-1} \frac{N-2}{N-1}\, h(p)\phi + o(\phi),
	\end{multline*}
	from which~\eqref{eq:estimate-partialD_X} follows.
	To prove~\eqref{eq:expansion-vphi},
	let us define $\eta_p \colon D_X(p,r_0) \to \mathbb{R}$
	as follows:
	\[
		\eta_p(q) := 1 - g\big(X(q),\nu_p(q)\big),
	\]
	which is a map of class $C^1$ vanishing at $p$.
	By definition and by using the coarea formula we get
	\[
		v_p(\phi) - \phi 
		= \int_{D_X(p,r(p,\phi))} \eta_p(q) \,\mathrm{d}\sigma
		= \int_0^{r(p,\phi)} 
		\Big(\int_{\partial{D(p,s)}} 
		\eta_p(q) \,\mathrm{d}\mathcal{H}^{N-2}(q)\Big)
		\,
		\mathrm{d}s.
	\]
	Let us notice that the map
	\[
		s \mapsto \int_{\textstyle \partial{D(p,s)}} 
		\eta_p(q) \,\mathrm{d}\mathcal{H}^{N-2}(q)
	\]
	is continuous by the continuity of $\eta_p$
	and of the measure of the geodesic spheres.
	Moreover, as $\eta_p$ is of class $C^1$,
	we have $\eta_p(q) \sim s$ for any $q \in \partial{D(p,s)}$, 
	so that, by using also the intermediate value theorem, 
	there exists a continuous real function $h_p$
	such that
	\[
		\int_{\partial{D(p,s)}} 
		\eta_p(q) \,\mathrm{d}\mathcal{H}^{N-2}(q)
		= h_p(s)s^{N-1}. 
	\]
	As a consequence, 
	using again the intermediate value theorem,
	there exists a continuous function $h(p,\phi)$
	such that
	\[
		v_p(\phi) - \phi
		= \int_0^{r(p,\phi)}
		h_p(s)s^{N-1}\, \mathrm{d}s
		= h(p,\phi) \, r(p,\phi)^N.
	\]
	Hence, 
	recalling that $\phi$ behaves like $r(p,\phi)^{N-1}$ as $\phi \to 0$
	(see~\eqref{eq:rpphi-estimate}),
	we conclude that
	\[
		\lim_{\phi \to 0} \frac{v_p(\phi) - \phi}{\phi^{N/(N-1)}} 
		= \lim_{\phi \to 0}
		h(p,\phi)
		\frac{r(p,\phi)^{N}}{\phi^{N/N-1}}
		\eqqcolon h(p),
	\]
	obtaining~\eqref{eq:expansion-vphi}.
	Finally, since all the quantities involved in the expansion depend smoothly on $p \in \Sigma$, and $\Sigma$ is compact, the remainder term $\omega(\phi)$ in~\eqref{eq:estimate-partialD_X} can be taken uniformly in $p$.
\end{proof}

By combining the above lemmas, 
we are now ready to state and proof the main result
of this section,
thus providing an estimate of a small
sublevel that contains the entire image of the photography map.
\begin{proposition}
	\label{prop:photography-sublevel}
	Let $f_{\varepsilon,\phi}\colon \Sigma \to \mathcal{X}_{\phi}$
	be defined as in
	Lemma~\ref{lem:photoConstruction}.
	There exists $\alpha = \alpha(M,g,X) > 0$ and $\phi_1 \in (0,\phi_0)$
	such that, for every $\phi \in (0,\phi_1)$ there exists 
	$\varepsilon_1 = \varepsilon_1(\phi) \in (0,\varepsilon_0(\phi))$ such that for every
	$\varepsilon \in (0,\varepsilon_1)$ we have
	\[
		E_{\varepsilon,\phi}(f_{\varepsilon,\phi}(p))
		\le \gamma_{N-1}\phi^{\frac{N-2}{N-1}} + \alpha \phi,
		\qquad \forall p \in \Sigma.
	\]
	In other words, setting
	$c\colon (0,\phi_1) \to \mathbb{R}$ as
	\begin{equation}
		\label{eq:def-sublevel}
		c(\phi)\coloneqq \gamma_{N-1}\phi^{\frac{N-2}{N-1}} + \alpha \phi,
	\end{equation}
	we have
	\[
		f_{\varepsilon,\phi}(\Sigma)\subset E_{\varepsilon,\phi}^{c(\phi)},
		\qquad \forall \phi \in (0,\phi_1),
		\, \forall\varepsilon \in (0,\varepsilon_1(\phi)).
	\]
\end{proposition}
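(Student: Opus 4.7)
The approach is to combine the pointwise convergence given by Lemma~\ref{lem:photoConstruction} with the asymptotic expansion of Lemma~\ref{lem:estimate-partialDphi}, upgrading both to uniform statements over the compact set $\Sigma$. First I would fix notation: since $\Sigma$ is compact and $\Upsilon\colon\Sigma\to\mathbb R$ is continuous, the constant
\[
	\alpha_0 \coloneqq \max_{p\in\Sigma}\Upsilon(p)
\]
is finite. I would then set $\alpha\coloneqq \alpha_0 + 2$ and choose $\phi_1\in(0,\phi_0)$ so small that the remainder $\omega(\phi)$ from \eqref{eq:estimate-partialD_X} satisfies $|\omega(\phi)|\le \phi$ for all $\phi\in(0,\phi_1)$. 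With these choices, Lemma~\ref{lem:estimate-partialDphi} immediately yields
\[
	\mathcal H^{N-2}_g\bigl(\partial D_X(p,r(p,\phi))\bigr)\;\le\;\gamma_{N-1}\,\phi^{\frac{N-2}{N-1}}+(\alpha_0+1)\,\phi\;=\;c(\phi)-\phi,
	\qquad \forall\, p\in\Sigma,\ \forall \phi\in(0,\phi_1),
\]
which is the ``geometric'' half of the bound.

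The remaining task is to promote the limit in \eqref{eq:limEnergyPhoto} to a uniform statement in $p\in\Sigma$, so that for fixed $\phi\in(0,\phi_1)$ I can absorb the energy error into the extra $+\phi$ slack carried by the definition of $\alpha$. To this end I would revisit the construction of $f_{\varepsilon,\phi}(p)=e^{i\tau_{p,\varepsilon}}\hat f_{\varepsilon,\phi}(p)$ in the proofs of Lemmas~\ref{lem:photoConstruction-hat} and \ref{lem:photoConstruction} and note that every estimate performed there is actually uniform in $p\in\Sigma$. Indeed, the radius $r(p,\phi)$ is continuous and bounded away from zero on the compact set $\Sigma$ (cf.\ \eqref{eq:rpphi-estimate}), the local chart $\varphi$ around $p$ via the exponential map has bi-Lipschitz constants controlled uniformly in $p$, and the potential contribution $O(\varepsilon^2/|\log\varepsilon|)$, the vortex-core contribution $O(1/|\log\varepsilon|)$, and the 2D vortex estimate giving the dominant $\mathcal H^{N-2}\bigl(\partial D_X(p,r(p,\phi))\bigr)$ term all come with constants depending continuously on $p$. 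For the phase correction, \eqref{eq:photo-almost-flux} is uniform in $p$ (it is an explicit consequence of the weak convergence $\star J(\hat f_{\varepsilon,\phi}(p))\rightharpoonup \pi\,\llbracket\partial D_X(p,r(p,\phi))\rrbracket$ on a $p$-equicontinuous family of test objects), hence so is $\|\nabla\tau_{p,\varepsilon}\|_{L^\infty}\to 0$, and the Cauchy--Schwarz step in the proof of Lemma~\ref{lem:photoConstruction} yields uniform control of the cross term.

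Putting these two uniform facts together, for each $\phi\in(0,\phi_1)$ I can select $\varepsilon_1=\varepsilon_1(\phi)\in(0,\varepsilon_0(\phi))$ such that
\[
	E_{\varepsilon,\phi}\bigl(f_{\varepsilon,\phi}(p)\bigr)\;\le\;\mathcal H^{N-2}_g\bigl(\partial D_X(p,r(p,\phi))\bigr)+\phi,
	\qquad \forall\, p\in\Sigma,\ \forall\varepsilon\in(0,\varepsilon_1),
\]
and then chaining with the geometric inequality above yields $E_{\varepsilon,\phi}(f_{\varepsilon,\phi}(p))\le c(\phi)$ uniformly in $p$, as required. The main technical obstacle is precisely the upgrade from pointwise to uniform convergence in Lemma~\ref{lem:photoConstruction}: although it is not formulated uniformly there, it is intrinsically a compactness-plus-continuity argument, and I would either give a brief verification that each of the $O(\cdot)$ constants in the construction is uniform on $\Sigma$, or reprove it cleanly by a Dini-type argument using continuity of $p\mapsto f_{\varepsilon,\phi}(p)$ in $W^{1,2}(M,\mathbb C)$ together with compactness of $\Sigma$ and monotonicity of the upper-bound construction in $\varepsilon$.
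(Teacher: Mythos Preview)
Your proposal is correct and follows essentially the same route as the paper: bound $\mathcal H^{N-2}_g(\partial D_X(p,r(p,\phi)))$ uniformly in $p$ via Lemma~\ref{lem:estimate-partialDphi} and compactness of $\Sigma$, then absorb the $\varepsilon$-error from \eqref{eq:limEnergyPhoto} into the remaining slack. The only differences are cosmetic (the paper splits the slack as $\phi/2+\phi/2$ and takes $\alpha=\max_\Sigma\Upsilon+1$, versus your $\phi+\phi$ and $\alpha=\max_\Sigma\Upsilon+2$); in fact you are more explicit than the paper about the uniformity in $p$ of the limit \eqref{eq:limEnergyPhoto}, which the paper simply writes as an $o(\varepsilon)$ term and absorbs without further comment.
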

\begin{proof}
	By Lemma~\ref{lem:estimate-partialDphi},
	since $\Upsilon\colon \Sigma \to \mathbb{R}$
	is a continuous function and 
	$\Sigma$ is compact, it attains its maximum.
	Moreover, 
	there exists $\phi_1 \in (0,\phi_0)$
	such that 
	the remainder term in~\eqref{eq:estimate-partialD_X} satisfies $\omega(\phi) \le \phi/2$
	for any $\phi \in (0,\phi_1)$.
	Therefore, for every $p \in \Sigma$ and $\phi \in (0,\phi_1)$,
	estimate~\eqref{eq:estimate-partialD_X} gives
	\[
		\mathcal{H}^{N-2}_g \big(\partial D_X(p,r(p,\phi))\big)
		\le \gamma_{N-1}\phi^{\frac{N-2}{N-1}}
		+ \big(\max_{p \in \Sigma}\Upsilon(p)\big) \phi
		+ \frac{\phi}{2}.
	\]
	By combining this last estimate with~\eqref{eq:limEnergyPhoto},
	we obtain 
	\[
		E_{\varepsilon,\phi}\big(f_{\varepsilon,\phi}(p)\big)
		\le \gamma_{N-1}\phi^{\frac{N-2}{N-1}}
		+ \big(\max_{p \in \Sigma}\Upsilon(p)\big)\phi 
		+ \frac{\phi}{2}
		+ o(\varepsilon),
	\]
	as $\varepsilon \to 0$.
	Setting $\varepsilon_1(\phi) \in (0,\varepsilon_0)$
	such that $o(\varepsilon) \le \phi/2$ for any
	$\varepsilon \in (0,\varepsilon_1(\phi))$ we obtain
	for every $p \in \Sigma$ the following estimate
	\[
		E_{\varepsilon,\phi}\big(f_{\varepsilon,\phi}(p)\big)
		\le \gamma_{N-1}\phi^{\frac{N-2}{N-1}}
		+ \big(\max_{p \in \Sigma}\Upsilon(p)\big)\phi 
		+ \phi,
		\qquad \forall \phi \in (0,\phi_1),\,
		\forall \varepsilon \in (0,\varepsilon_1(\phi)),
	\]
	and so we get the thesis by setting
	$\alpha = \max_{p \in \Sigma}\Upsilon(p) +1$.
\end{proof}

\begin{remark}\label{rem:dependence_eps}
	A close inspection of the proof of
	Proposition~\ref{prop:photography-sublevel} shows that the parameter
	$\varepsilon_1$ depends on on both $\phi$ and the potential~$W$.
	Indeed, $\varepsilon_1(\phi)$ is chosen so that the energy
	$E_{\varepsilon,\phi}(f_{\varepsilon,\phi}(p))$ stays within $\phi/2$
	of its limiting value
	$\gamma_{N-1}\phi^{\frac{N-2}{N-1}}
	+\big(\max_{p\in\Sigma}\Upsilon(p)\big)\phi$,
	a choice that relies on~\eqref{eq:limEnergyPhoto}.
	In that estimate, the contribution of the potential term
	vanishes in view of~\eqref{eq:dependence-eps-W},
	so that the resulting error term depends explicitly on~$W$.
	This explains the dependence on~$W$
	of the parameter $\varepsilon^*$
	appearing in Theorem~\ref{theorem:main}.
	On the other hand, since the photography map
	must be well defined, we set $\varepsilon_1(\phi) \le \varepsilon_0$,
	where $\varepsilon_0(\phi)$ depends also on $M$, $g$, and~$X$,
	as can be checked from its definition in the proof of 
	Lemma~\ref{lem:photoConstruction}.
\end{remark}

\begin{remark}
	\label{rem:firstorder-cphi}
	By combining~\eqref{eq:J_M-asymptotic}
	and~\eqref{eq:def-sublevel},
	we obtain 
	\begin{equation}
		\label{eq:firstorder-cphi}
		\lim_{\phi \to 0}
		\frac{c(\phi)}{J_M(\phi,X)} =
		\lim_{\phi \to 0}
		\frac{\gamma_{N-1}\phi^{\frac{N-2}{N-1}} + \alpha \phi}{\gamma_{N-1}\phi^{\frac{N-2}{N-1}}
		+  o(\phi^{\frac{N-2}{N-1}})} = 1,
	\end{equation}
	which will be a crucial estimate for the next section.
\end{remark}

\section{Barycenter map}
\label{sec:barycenter}

In this section, we prove the 
existence of a well-defined and continuous barycenter map on the sublevel set containing the image of the photography map and taking values in~$\Sigma$,
provided that $\phi$ and $\varepsilon$ are sufficiently small.
This leads to the following result, where we recall that the function
$c\colon (0,\phi_1) \to \mathbb{R}$
has been defined in~\eqref{eq:def-sublevel}.

\begin{proposition}
	\label{prop:barycenter}
	There exists $\phi_2 \in (0,\phi_1)$
	such that for any $\phi \in (0,\phi_2)$
	there exists $\varepsilon_2 = \varepsilon_2(\phi) \in (0,\varepsilon_1)$ such that
	for any $\varepsilon \in (0,\varepsilon_2)$
	there exists a continuous map 
	\[
		\beta\colon  E_{\varepsilon,\phi}^{c(\phi)}\to \Sigma.
	\]
\end{proposition}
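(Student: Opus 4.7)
The plan is to construct $\beta$ as the composition $h_\Sigma(1,\cdot)\circ \beta^*$, where $\beta^*\colon E_{\varepsilon,\phi}^{c(\phi)}\to \Sigma_{\delta}$ is an intrinsic barycenter built from the energy density of $u$. The key input will be a uniform concentration property: for $\phi$ and $\varepsilon$ sufficiently small and every $u$ in the sublevel, the energy density of $u$ has almost all of its mass inside a small geodesic ball centered at a point close to $\Sigma$. Once this is established, a standard extrinsic-then-projected construction produces a continuous $\beta^*$ with image in $\Sigma_{\delta}$, and composing with the retraction of Assumption~\ref{ass:Sigmadelta-retraction} finishes the proof.

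The first step, and the core of the argument, is to prove the concentration statement by contradiction. Suppose there exist $\phi_n \to 0$, $\varepsilon_n \to 0$ and $u_n\in E_{\varepsilon_n,\phi_n}^{c(\phi_n)}$ whose energy measures
\[
\mu_n \coloneqq \frac{1}{\pi|\log\varepsilon_n|}\Big(\tfrac{1}{2}\|\nabla u_n\|^2 + \tfrac{1}{\varepsilon_n^2}W(u_n)\Big)\,\mathrm{d}v_g
\]
retain, after rescaling by $\phi_n^{-(N-2)/(N-1)}$, a uniformly positive fraction of their mass outside every fixed-size neighborhood of $\Sigma$. Proposition~\ref{prop:Gamma-equicoer} applied along subsequences produces currents $T_n\in\mathcal{I}^{\phi_n}_{N-1}(M)$ with $\star J(u_n)\to \pi\partial T_n$ in flat norm, and the global $\Gamma$-liminf of Proposition~\ref{prop:Gamma-liminf} combined with $E_{\varepsilon_n,\phi_n}(u_n)\le c(\phi_n)$ gives $\|\partial T_n\|(M)\le c(\phi_n)+o(1)$. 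The first-order identity $c(\phi_n)=J_M(\phi_n,X)(1+o(1))$ of Remark~\ref{rem:firstorder-cphi} then identifies $\partial T_n$ as an asymptotically minimizing sequence for the codimension-$2$ isoperimetric problem \eqref{eq:inf_JM}, so the concentration result in Appendix~\ref{app:generalized-compactness} forces $\|\partial T_n\|$ to concentrate, after the same rescaling, in a single ball around a point whose distance from $\Sigma$ tends to zero. Feeding this back into the localized $\Gamma$-liminf \eqref{eq:Gamma-liminfLoc} applied on the complement of such balls transfers the concentration from the currents to $\mu_n$, yielding the contradiction.

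Next, I would construct $\beta^*$ explicitly by fixing an isometric embedding $\iota\colon M\hookrightarrow \mathbb{R}^k$ together with the smooth nearest-point projection $\Pi$ from a tubular neighborhood of $\iota(M)$ onto $\iota(M)$, and setting
\[
\widetilde{\beta}(u)\coloneqq \frac{\int_M \iota(x)\big(\tfrac{1}{2}\|\nabla u\|^2+\tfrac{1}{\varepsilon^2}W(u)\big)\,\mathrm{d}v_g}{\int_M \big(\tfrac{1}{2}\|\nabla u\|^2+\tfrac{1}{\varepsilon^2}W(u)\big)\,\mathrm{d}v_g},\qquad \beta^*(u)\coloneqq \iota^{-1}\!\circ\Pi\bigl(\widetilde{\beta}(u)\bigr).
\]
Continuity of $u\mapsto \widetilde{\beta}(u)$ on $W^{1,2}(M,\mathbb{C})$ follows from the regularity properties established in Lemma~\ref{lemma:basic_functional}, together with the fact that the denominator is bounded below away from zero on the sublevel: by Proposition~\ref{prop:Gamma-liminf} any $u \in E_{\varepsilon,\phi}^{c(\phi)}$ with $\varepsilon$ small satisfies $E_{\varepsilon,\phi}(u)\ge J_M(\phi,X)/2$, which excludes degenerations of the denominator uniformly. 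The concentration established in the previous step then provides thresholds $\phi_2>0$ and $\varepsilon_2(\phi)>0$ ensuring that $\widetilde{\beta}(u)$ falls in the tubular neighborhood of $\iota(M)$ and that $\beta^*(u)\in\Sigma_{\delta}$; setting $\beta \coloneqq h_\Sigma(1,\cdot)\circ \beta^*$ then gives the proposition.

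The main obstacle will be the quantitative transfer of concentration in the first step. Isoperimetric almost-minimality of $\partial T_n$ only controls the part of $\mu_n$ witnessed by vorticity, while in principle $\mu_n$ could harbor extra energy supported away from $\mathrm{spt}(\partial T_n)$. The sharpness of $c(\phi)=J_M(\phi,X)+o(\phi^{(N-2)/(N-1)})$ is what rules this out: any fixed-positive mass of $\mu_n$ outside the concentration ball would, by \eqref{eq:Gamma-liminfLoc} applied on the complement summed with the $\Gamma$-liminf inside the ball, push the total energy strictly above $c(\phi_n)$, contradicting $u_n\in E_{\varepsilon_n,\phi_n}^{c(\phi_n)}$. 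Carrying out this bookkeeping uniformly in $n$ along the natural rescaling, and exploiting the fact that the appendix result applies to almost-minimizers rather than only to exact minimizers, is the delicate technical point on which the whole construction rests.
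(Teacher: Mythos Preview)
Your overall architecture is exactly that of the paper: prove an energy-concentration lemma by contradiction via $\Gamma$-convergence and the appendix result on isoperimetric almost-minimizers, define a barycenter $\beta^*$ landing in $\Sigma_\delta$, then compose with $h_\Sigma(1,\cdot)$. The concentration argument you sketch is essentially the paper's Lemma~\ref{lem:jacobian-concentration}, including the key point you flag at the end about ruling out ``extra'' energy not witnessed by vorticity; the paper handles this the same way, using the localized liminf \eqref{eq:Gamma-liminfLoc} together with the sharpness $c(\phi)/J_M(\phi,X)\to 1$. One small wrinkle: the correct negation of the concentration statement naturally produces a \emph{double} sequence ($\phi_n\to 0$, and for each $n$ a sequence $\varepsilon_{n,k}\to 0$), since the claim quantifies $\varepsilon$ after $\phi$. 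The paper works with this double sequence and passes $k\to\infty$ first; your single-sequence formulation requires a diagonal extraction to be fully rigorous.

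The genuine divergence is in the barycenter itself. You build $\beta^*$ \emph{extrinsically}: embed $M\hookrightarrow\mathbb{R}^k$, take the Euclidean center of mass of the energy density, and project via the nearest-point retraction. The paper instead invokes Petean's \emph{intrinsic} barycenter (Lemma~\ref{lem:intrinsicbarycenterPetean}), which is defined directly on the space $L^1_{r,\eta}(M)$ of functions with $\eta$-concentration in some $r$-ball and comes with the ready-made localization $\mathcal{C}(h)\in B_g(p,2r)$. Your extrinsic route is more elementary and avoids citing an external construction, but it requires choosing the concentration parameter $\eta$ close enough to $1$ (not merely $>1/2$) so that the $(1-\eta)$-fraction of energy lying elsewhere on $M$ cannot drag $\widetilde{\beta}(u)$ out of the tubular neighborhood of $\iota(M)$ or push $\beta^*(u)$ outside $\Sigma_\delta$; this is harmless since the concentration lemma is stated for arbitrary $\eta\in(1/2,1)$. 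The intrinsic approach buys a slightly cleaner quantitative statement and works for any $\eta>1/2$, at the cost of importing a nontrivial lemma.
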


To prove Proposition~\ref{prop:barycenter}, we first establish that the \emph{energy density} of any function in the sublevel set is mostly concentrated within a small ball
centered at a point near $\Sigma$,
where the energy density $e_\varepsilon: \mathcal{X}_\phi \to L^1(M)$ is defined
for any $\varepsilon > 0$
as
\begin{equation}
	\label{eq:def-energydensity}
	e_\varepsilon(u)(x) = 
	\frac{1}{\pi |\log\varepsilon|}
	\left( \frac{1}{2}\norm{\nabla u}^2
		+ \frac{1}{\varepsilon^2}W(u)
	\right),
\end{equation}
so that
\[
	E_{\varepsilon,\phi}(u) = 
	\int_M e_\varepsilon(u) \, \mathrm{d}v_g,
	\qquad \forall u \in \mathcal{X}_\phi.
\]
This concentration property is stated in the following result.
\begin{lemma}
	\label{lem:jacobian-concentration}
	There exists a constant $\mu > 0$
	such that the following property holds.  
	For any $\eta \in (1/2,1)$
	there exists 
	$\phi_3 \in (0,\phi_1)$ such that
	for any $\phi \in (0,\phi_3)$
	there exists $\varepsilon_3 = \varepsilon_3(\phi) \in (0,\varepsilon_1)$
	such that for any $\varepsilon \in (0,\varepsilon_3)$
	and any function $u \in E_{\varepsilon,\phi}^{c(\phi)}$
	there exists a point $p \in \Sigma_{\delta/2}$
	(where $\delta > 0$ 
	is given by~\ref{ass:Sigmadelta-retraction})
	such that
	\begin{equation}
		\label{eq:jacobian-concentration}
		\frac{
			\int_{B_g(p,\mu \,\phi^{\frac{1}{N-1}})}
			e_\varepsilon(u) \mathrm{d}v_g
			}{
			\int_{M}
			e_\varepsilon(u) \mathrm{d}v_g
		}
		\ge \eta.
	\end{equation}

\end{lemma}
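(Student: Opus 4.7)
The plan is to first identify $\mu$ as the universal constant produced by the isoperimetric concentration result for almost minimizers of the codimension--$2$ problem proved in Appendix~\ref{app:generalized-compactness}, and then argue by contradiction. Assuming the lemma fails for this $\mu$, we obtain $\eta_0\in(1/2,1)$ and sequences $\phi_n\downarrow 0$, $\varepsilon_n\downarrow 0$ (with $\varepsilon_n<\varepsilon_1(\phi_n)$), together with $u_n\in E_{\varepsilon_n,\phi_n}^{c(\phi_n)}$ such that, for every $p\in\Sigma_{\delta/2}$ and every $n$,
\[
  \int_{B_g(p,\mu\phi_n^{1/(N-1)})} e_{\varepsilon_n}(u_n)\,\de v_g
  < \eta_0 \int_M e_{\varepsilon_n}(u_n)\,\de v_g.
\]
To each $u_n$ I would then associate an integral current $T_n$ as in Proposition~\ref{prop:Gamma-equicoer}: via a diagonal extraction that exploits the freedom to further lower $\varepsilon_n$, the Jerrard--Sandier ball construction behind equicoercivity yields $T_n\in\mathcal{I}_{N-1}(M)$ with $\mathbf{F}(\star J(u_n)/\pi-\partial T_n)\to 0$, flux $T_n(\star X)=\phi_n+o(1)$, and, by~\eqref{eq:Gamma-liminf}, $\mathbf{M}(\partial T_n)\le E_{\varepsilon_n,\phi_n}(u_n)+o(1)\le c(\phi_n)+o(1)$.

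Combining the last bound with Remark~\ref{rem:firstorder-cphi} rewrites as $\mathbf{M}(\partial T_n)\le (1+o(1))\,J_M(\phi_n,X)$: the currents $T_n$ form an almost minimizing sequence for the codimension--$2$ flux-constrained isoperimetric problem in the small-flux regime. Appendix~\ref{app:generalized-compactness} then produces points $p_n\in M$ with $\mathrm{dist}_g(p_n,\Sigma)\to 0$, so that $p_n\in\Sigma_{\delta/2}$ for all $n$ large enough, and such that
\[
  \frac{\mathbf{M}_{B_g(p_n,\mu\phi_n^{1/(N-1)})}(\partial T_n)}{\mathbf{M}(\partial T_n)}\longrightarrow 1.
\]
The geometric core of this step is that a blow--up limit concentrating at a point with $\|X\|<1$ would produce a Euclidean flux-constrained isoperimetric configuration with strictly larger perimeter-to-flux ratio than the optimal constant $\gamma_{N-1}$, contradicting the almost-minimality of the $T_n$.

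To close the argument I would feed $U=B_g(p_n,\mu\phi_n^{1/(N-1)})$ into the local liminf~\eqref{eq:Gamma-liminfLoc}, obtaining $\int_U e_{\varepsilon_n}(u_n)\,\de v_g\ge (1-o(1))\,\mathbf{M}(\partial T_n)$, and combine this with the global upper bound $\int_M e_{\varepsilon_n}(u_n)\,\de v_g\le c(\phi_n)=(1+o(1))\,\mathbf{M}(\partial T_n)$; the ratio that was assumed to stay below $\eta_0$ is then driven to $1$, a contradiction. The main obstacle is the Appendix~\ref{app:generalized-compactness} concentration statement itself, which requires a blow--up analysis at the natural scale $\phi_n^{1/(N-1)}$, an identification of blow--up limits with Euclidean flux-constrained minimizers (planar disks) and a no-dichotomy argument ruling out mass splitting between widely separated balls. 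A secondary, essentially bookkeeping, difficulty is the non-asymptotic use of the local liminf in the variable-flux regime $\phi_n\to 0$: the error terms coming from the ball construction and from the diagonal extraction in $\varepsilon_n$ must be controlled uniformly in $\phi_n$, which is what ultimately forces the threshold $\varepsilon_3(\phi)$ in the statement to be chosen as a function of $\phi$.
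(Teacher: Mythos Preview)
Your overall strategy---contradiction, passage to currents via $\Gamma$-convergence, invocation of the Appendix~\ref{app:generalized-compactness} concentration result---matches the paper's. However, there is a genuine structural gap in how you set up the contradiction sequence, and what you label a ``bookkeeping difficulty'' is in fact the crux of the matter.

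The negation of the statement produces, for each $\phi_n\to 0$, a \emph{sequence} $\varepsilon_{n,k}\to 0$ (as $k\to\infty$) with corresponding bad functions $u_{n,k}$---not a single $\varepsilon_n$. The paper keeps this double index: for each fixed $n$ it applies Propositions~\ref{prop:Gamma-equicoer} and~\ref{prop:Gamma-liminf} to the inner sequence $(u_{n,k})_k$, obtaining a current $T_n\in\mathcal{I}_{N-1}^{\phi_n}$ with \emph{exact} flux $\phi_n$ and, crucially, the bound
\[
  \norm{\partial T_n}\big(B_g(p,\mu\phi_n^{1/(N-1)})\big)\le \eta\, c(\phi_n)
  \qquad\text{for all }p\in\Sigma_{\delta/2},
\]
obtained by passing the assumed energy bound through the local liminf~\eqref{eq:Gamma-liminfLoc} with $n$ (hence $\phi_n$, $T_n$, and the ball) \emph{fixed}. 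The contradiction is then reached entirely at the level of currents: Theorem~\ref{cor:almost-iso-concentration} gives $p_n\in\Sigma_{\delta/2}$ with $\norm{\partial T_n}(B_g(p_n,\dots))/c(\phi_n)\to 1$, which is incompatible with the displayed inequality. There is no need to return to energies.

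Your single diagonal sequence $(\phi_n,\varepsilon_n,u_n)$ breaks this. Proposition~\ref{prop:Gamma-liminf} is a statement about $\liminf_{\varepsilon\to 0}$ with a fixed limiting current $\partial T$ and a fixed open set $U$; when $T_n$ and $U_n=B_g(p_n,\mu\phi_n^{1/(N-1)})$ both vary with $n$, it gives no pointwise lower bound of the form $\int_{U_n}e_{\varepsilon_n}(u_n)\ge(1-o(1))\mathbf{M}(\partial T_n)$ as you claim. Likewise, equicoercivity does not associate a current to a single function $u_n$---it needs a family $\varepsilon\to 0$ at fixed $\phi$. The ``freedom to further lower $\varepsilon_n$'' you allude to is precisely the double-indexing; once you make it explicit, the cleanest route is the paper's: use the liminf to transfer the \emph{upper} bound on the ball energy to an upper bound on $\norm{\partial T_n}$ in the ball, and contradict Theorem~\ref{cor:almost-iso-concentration} directly, rather than trying to push a lower bound back to the energy side.
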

The proof of Lemma~\ref{lem:jacobian-concentration}
relies on the equicoerciveness and
liminf properties of $\Gamma$-convergence 
to invoke a similar concentration result
for the almost minimizers of the isoperimetric problem 
in codimension 2, which is
Theorem~\ref{cor:almost-iso-concentration}.
To maintain the flow of the discussion, and since the result is interesting in its own right, we postpone the statement and the proof of 
Theorem~\ref{cor:almost-iso-concentration} 
to Appendix~\ref{app:generalized-compactness}.

\begin{proof}[Proof of Lemma~\ref{lem:jacobian-concentration}]
	By contradiction, assume that there exists $\eta \in (1/2,1)$ as well as sequences
	$(\phi_n)_{n \in \mathbb{N}}$ in $(0,1)$, $(\varepsilon_{n,k})_{(n,k) \in \mathbb{N}^2}$ in $(0,1)$ and 
	$(u_{n,k})_{(n,k) \in \mathbb{N}^2}$ in $W^{1,2}(M,\mathbb{C})$
	such that
	\begin{equation*}
		\lim_{n \to \infty}\phi_n=0, \quad \lim_{k \to \infty}\varepsilon_{n,k}=0 \mbox{ for all } n \in \mathbb{N}, \quad u_{n,k} \in E_{\varepsilon_{n,k},\phi_n}^{c(\phi_n)},
	\end{equation*}
	and
	\begin{equation*}
		\frac{
			\int_{B_g(p,\mu\phi_n^{\frac{1}{N-1}})}
			e_{\varepsilon_{n,k}}(u_{n,k})\mathrm{d}v_g
			}{
			\int_{M}e_{\varepsilon_{n,k}}(u_{n,k})\mathrm{d}v_g
		}< \eta,
		\quad \mbox{ for all } p \in \Sigma_{\delta/2}
		\mbox{ and } (n,k) \in \mathbb{N}^2.
	\end{equation*}
	As a consequence,
	\begin{equation}\label{eq:inequalities_energies_nk}
		\frac{\int_{B_g(p,\mu\phi_n^{\frac{1}{N-1}})}
			e_{\varepsilon_{n,k}}(u_{n,k})\mathrm{d}v_g
		}{c(\phi_n)}
		< \eta ,
		\quad \mbox{ for all } p \in \Sigma_{\delta/2}
		\mbox{ and } (n,k) \in \mathbb{N}^2.
	\end{equation}
	Using the $\Gamma$--convergence results 
	given in Proposition~\ref{prop:Gamma-equicoer}
	and Proposition~\ref{prop:Gamma-liminf},
	we deduce by passing to the $\liminf$ as $k \to \infty$ in \eqref{eq:inequalities_energies_nk} that for all $n \in \mathbb{N}$ there exists $\partial T_n$ a $(N-2)$--dimensional integral boundary such that
	\begin{equation}\label{eq:limit_quotient_1}
		\frac{\norm{\partial T_n}(B_g(p,\mu\phi_n^{\frac{1}{N-1}}))}{c(\phi_n)}<\eta, 
		\quad \mbox{ for all } p \in \Sigma_{\delta/2},
	\end{equation}
	and
	\begin{equation}\label{eq:estimate_perimeters_Tn}
		\norm{\partial T_n}(M) \leq c(\phi_n).
	\end{equation}
	Moreover, $T_n(\star X)=\phi_n$ for all $n \in \mathbb{N}$.
	By Remark~\ref{rem:firstorder-cphi},
	we have
	\begin{equation}\label{eq:limit_quotient_2}
		\lim_{n \to \infty} \frac{c(\phi_n)}{J_M(\phi_n,X)}=1,
	\end{equation}
	and $\norm{\partial T_n}(M) \geq J_M(\phi_n,X)$,
	so that we can combine \eqref{eq:estimate_perimeters_Tn} and \eqref{eq:limit_quotient_2} to get
	\begin{equation}\label{eq:limit_quotient_3}
		\lim_{n \to \infty} \frac{\norm{\partial T_n}(M)}{J_M(\phi_n,X)}=1.
	\end{equation}

	By \eqref{eq:limit_quotient_3}, we can apply
	Theorem~\ref{cor:almost-iso-concentration} which yields that
	there exists a sequence of points $(p_n)_{n \in \mathbb{N}}\subset M$ 
	such that $\operatorname{dist}_g(p_n,\Sigma) \to 0$ and
	\begin{equation}
		\lim_{n \to \infty} \frac{\norm{\partial T_n}(B_g(p_n,\mu\phi_n^{\frac{1}{N-1}}))}{\norm{\partial T_n}(M)}=1.
	\end{equation}
	Then,  $p_n \in \Sigma_{\delta/2}$
	for all $n$ sufficiently large, 
	and by using~\eqref{eq:limit_quotient_3} we obtain
	\begin{equation}
		\lim_{n \to \infty} \frac{\norm{\partial T_n}(B_g(p_n,\mu\phi_n^{\frac{1}{N-1}}))}{c(\phi_n)}=1,
	\end{equation}
	which contradicts \eqref{eq:limit_quotient_1} and completes the proof.
\end{proof}

Thanks to Lemma~\ref{lem:jacobian-concentration}, 
we can define the barycenter of a function
$u \in E_{\varepsilon,\phi}^{c(\phi)}$ 
as the \emph{intrinsic barycenter} of its energy density 
$e_\varepsilon(u) \in L^1(M,\mathbb{R})$,
in the sense introduced by Petean in~\cite{MR3912791}, 
building on earlier ideas of Karcher and Grove 
(cf.~\cite{MR442975}).
Loosely speaking, since Lemma~\ref{lem:jacobian-concentration} 
ensures that the energy densities of functions in 
$E_{\varepsilon,\phi}^{c(\phi)}$ are concentrated in small balls,
the barycenter of a function can be effectively computed
by considering only the contribution inside such a ball, while neglecting contributions from the outside.
For the reader’s convenience, we recall the intrinsic barycenter construction;
note that $r_0 > 0$ was chosen so that
every geodesic ball of radius $r < r_0$ is strongly convex.
\begin{lemma}
	\label{lem:intrinsicbarycenterPetean}
	\emph{(Intrinsic barycenter map, cf.~\cite[Theorem 5.2]{MR3912791})}
	Let $r \in (0,r_0/2)$ and $\eta \in (1/2,1)$, and define
	\[
		L^1_{r,\eta}(M) \coloneqq
		\bigg\{
			h \in L^1(M)\setminus \{0\}:
			\sup_{p \in M}
			\frac{\int_{B_g(p,r)} |h|\,\mathrm{d}v_g}{\norm{h}_{L_1(M)}}
			> \eta
		\bigg\}.
	\]
	Then, there exists a continuous function 
	$\mathcal{C}\colon L^1_{r,\eta}(M) \to M$
	such that if $p \in M$ verifies 
	\[
		\frac{\int_{B_g(p,r)} |h|\,\mathrm{d}v_g}{\norm{h}_{L^1(M)}}
		> \eta,
	\]
	then $\mathcal{C}(h) \in B_g(p,2 r)$.
\end{lemma}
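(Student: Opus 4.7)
The lemma cites \cite[Theorem 5.2]{MR3912791}, and the strategy is to sketch a Karcher-mean-type construction adapted to an $L^1$-concentration hypothesis with $\eta > 1/2$. The argument proceeds in three stages: localization of the concentration, a strictly convex minimization on a strongly convex geodesic ball to define $\mathcal{C}(h)$, and verification of continuity together with the sharp geometric bound.

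First, for $h \in L^1_{r,\eta}(M)$ I would consider the continuous function $G_h(x) := \|h\|_{L^1(M)}^{-1}\int_{B_g(x,r)} |h|\,\mathrm{d}v_g$ and its open superlevel set $S_h := \{x \in M : G_h(x) > \eta\}$, which is nonempty by hypothesis. A disjoint-balls argument shows $\mathrm{diam}(S_h) < 2r$: two points $p, q \in S_h$ with $d_g(p,q) \ge 2r$ would yield $B_g(p,r) \cap B_g(q,r) = \emptyset$, hence $G_h(p) + G_h(q) \le 1$, contradicting $G_h(p), G_h(q) > \eta > 1/2$. I would then select $p_h \in S_h$ depending continuously on $h$, for instance as the center of mass (computed in a local chart) of a smooth regularization of $\mathbf{1}_{S_h}$. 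Since $r < r_0/2$, the ball $B_g(p_h, 2r)$ is strongly geodesically convex and contains $S_h$.

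Next, using a smooth cutoff $\chi_h \in C_c^\infty(B_g(p_h, 2r))$ equal to $1$ on a thickening of $S_h$ and depending continuously on $h$, I would define the Karcher functional
\[
F_h(x) := \int_M d_g(x, y)^2 \, \chi_h(y)\, |h(y)|\,\mathrm{d}v_g(y),
\qquad x \in \bar{B}_g(p_h, 2r).
\]
Since the effective integrand is supported in the strongly convex ball and $x \mapsto d_g(x, y)^2$ is uniformly strictly convex there for $y$ in the same ball, $F_h$ is strictly convex on $\bar{B}_g(p_h, 2r)$ and admits a unique minimizer $\mathcal{C}(h)$. An inward-pointing-gradient computation on $\partial B_g(p_h, 2r)$, using that the dominant portion of the weighted mass sits in $S_h$, places $\mathcal{C}(h)$ in the open ball. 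Continuity of $h \mapsto \mathcal{C}(h)$ in the $L^1$ topology then follows from the $L^1$-continuity of $F_h$, $p_h$, and $\chi_h$, together with the standard stability of unique minimizers of uniformly strictly convex functionals under continuous perturbations.

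The main obstacle is extracting the sharp geometric bound $\mathcal{C}(h) \in B_g(p, 2r)$ for every concentration center $p$. For any such $p$ one has $p \in S_h$, so $d_g(p_h, p) < 2r$ by Step 1, and the triangle inequality alone only yields $\mathcal{C}(h) \in B_g(p, 4r)$. The required factor-of-two improvement must be extracted from the first-order condition $\nabla F_h(\mathcal{C}(h)) = 0$: since more than half of the weighted mass lies in $B_g(p, r)$ for \emph{every} concentration center $p$, the majority of the gradient contributions coming from this dominant mass pull the minimizer within $2r$ of $p$. Implementing this last step rigorously — upgrading the naive $4r$ bound to the required $2r$ — is where the strict inequality $\eta > 1/2$ and the uniform strict convexity of $d_g^2$ on strongly convex balls of radius less than $r_0$ come together, and is the most delicate part of the argument.
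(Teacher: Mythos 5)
First, note that the paper does not actually prove this lemma: it is imported verbatim as a citation of Petean \cite[Theorem 5.2]{MR3912791}, so there is no internal proof to compare against. Judged on its own merits, your sketch has the right overall shape (localization via the disjoint-balls argument, then a Karcher-type strictly convex minimization on a strongly convex ball), and the diameter bound $\mathrm{diam}(S_h)<2r$ from $\eta>1/2$ is correct. But there are two genuine gaps.

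The first is the continuity of the auxiliary data $p_h$ and $\chi_h$. You build both out of the open superlevel set $S_h=\{G_h>\eta\}$, but $h\mapsto S_h$ is \emph{not} continuous in any useful sense: if $G_h$ has a plateau at the level $\eta$, an arbitrarily small $L^1$-perturbation of $h$ can add or remove a set of positive measure from $S_h$, so the indicator function of $S_h$ (and hence any mollification of it, its center of mass, and the cutoff ``equal to $1$ on a thickening of $S_h$'') jumps discontinuously. The whole point of the intrinsic-barycenter constructions of Grove--Karcher and Petean is to avoid selecting a base point or a set: the localizing weight must be produced from \emph{continuous functionals} of $h$ (for instance a weight of the form $\big(G_h(y)-\eta\big)_+$ or a fixed radial profile convolved against $|h|$), not from the discontinuous set $S_h$. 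As written, Step 1 does not deliver a continuous map $h\mapsto p_h$, and everything downstream inherits this defect.

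The second gap is the $2r$ bound itself, which you explicitly defer as ``the most delicate part'' without carrying it out --- and the naive estimate in fact fails for your choice of truncation. With $\chi_h$ supported in $B_g(p_h,2r)$ and $d_g(p_h,p)<2r$, the truncated measure can charge points at distance up to $4r$ from a concentration center $p$, while only a fraction $\eta$ of its mass is guaranteed to lie in $B_g(p,r)$. The Euclidean comparison then places the Karcher mean within $\eta r+(1-\eta)\,4r=(4-3\eta)r$ of $p$, which exceeds $2r$ precisely when $\eta<2/3$; since the lemma is stated for all $\eta\in(1/2,1)$ (and is applied in the paper with $\eta$ only slightly above $1/2$), the bound is not established. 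To close this one must either shrink the effective support of the truncated measure to something like $B_g(p,2r+o(1))$ for \emph{every} admissible center $p$ simultaneously (which the diameter bound on $S_h$ permits, but your cutoff does not enforce), or run a genuine first-variation argument at the minimizer; neither is done. Given that the result is available off the shelf from \cite{MR3912791}, the honest conclusion is that your sketch reconstructs the strategy but does not constitute a proof.
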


\begin{proof}[Proof of Proposition~\ref{prop:barycenter}]
	We fix $\eta \in (1/2,1)$
	and choose $\phi \in (0,\phi_3)$ and
	$\varepsilon \in (0,\varepsilon_3(\phi))$ 
	sufficiently small according
	to Lemma~\ref{lem:jacobian-concentration}.
	Then, for every $u \in E_{\varepsilon,\phi}^{c(\phi)}$
	we have that $e_\varepsilon(u)$
	belongs to $L^1_{r_{\phi},\eta}(M)$
	with $r_{\phi} = \mu \phi^{\frac{1}{N-1}}$.
	Now, recalling the definitions of $\delta $
	in~\ref{ass:Sigmadelta-retraction}
	and of $r_0$ in~\eqref{eq:def-r0},
	let us choose $\phi_2 \in (0,\phi_3)$
	such that
	\[
		r_{\phi_2} = \mu \phi_2^{\frac{1}{N-1}}
		< \frac{1}{2}\min\big\{\delta/2,r_0,\phi_3\big\}.
	\]
	Hence, for any $\phi \in (0,\phi_2)$,
	Lemma~\ref{lem:intrinsicbarycenterPetean} yields a continuous map
	$\mathcal{C}\colon L^1_{r_\phi,\eta}(M) \to M$
	such that $\mathcal{C}(h) \in B_g(p,2r_\phi)$ whenever
	$\int_{B_g(p,r_\phi)} |h|\,\mathrm{d}v_g > \eta \norm{h}_{L^1(M)}$
	for some $p \in M$.
	As a consequence, we define the map
	\[
		\beta^*\colon E_{\varepsilon,\phi}^{c(\phi)} \to M
		\quad\text{by}\quad
		\beta^*(u) = \mathcal{C}(e_\varepsilon(u)),
	\]
	which is continuous by composition.
	By~Lemma~\ref{lem:jacobian-concentration},
	for any $u \in E_{\varepsilon,\phi}^{c(\phi)}$
	there exists $p \in \Sigma_{\delta/2}$ 
	such that $\mathcal{C}(e_{\varepsilon}(u)) \in B_g(p,2r_\phi)$.
	Therefore, by the above choice of $\phi_2$
	we have 
	\[
		\operatorname{dist}_g(\mathcal{C}(u) , \Sigma)
		\le 
		\operatorname{dist}_g(\mathcal{C}(u) , p) +
		\operatorname{dist}_g(p, \Sigma)
		\le 2r_{\phi} + \delta/2
		\le 2\mu \phi_2^{\frac{1}{N-1}}  + \delta/2
		\le \delta,
	\]
	which means that $\beta^*(E_{\varepsilon,\phi}^{c(\phi)})
	\subset \Sigma_{\delta}$.

	Finally, recalling~\ref{ass:Sigmadelta-retraction},
	we define the barycenter map 
	$\beta:E_{\varepsilon,\phi}^{c(\phi)} \to \Sigma$
	by composing $\beta^*$
	with the retraction $h_\Sigma:[0,1]\times \Sigma_\delta \to \Sigma_\delta$,
	namely,
	\begin{equation}
		\label{eq:def-barycenter}
		\beta(u) \coloneqq h_\Sigma(1,\beta^*(u)) \in \Sigma,
	\end{equation}
	which is again continuous by composition.
\end{proof}

\section{Proof of Theorem~\ref{theorem:main}}
\label{sec:finalproof}

By Propositions~\ref{prop:photography-sublevel} and \ref{prop:barycenter}, 
for $\phi,\varepsilon$ sufficiently small the composition
\[
	\beta \circ f_{\varepsilon,\phi} \colon \Sigma \to \Sigma
\]
is well-defined and continuous.
To invoke the photography method (Theorem~\ref{theorem:abstract-photography}), 
it therefore suffices to prove that 
$\beta \circ f_{\varepsilon,\phi}$ is homotopic to the identity on $\Sigma$.
Once this homotopy is established, 
all the hypotheses of Theorem~\ref{theorem:abstract-photography} 
are satisfied for the Ginzburg--Landau functional $E_{\varepsilon,\phi}$,
and Theorem~\ref{theorem:main} follows.
We now turn to the formal proof.

\begin{proof}
	Recalling the thresholds $\phi_1,\varepsilon_1(\phi)$
	from Proposition~\ref{prop:photography-sublevel}
	and $\phi_2,\varepsilon_2(\phi)$ from Proposition~\ref{prop:barycenter}
	(with $\phi_2<\phi_1$ and $\varepsilon_2(\phi) < \varepsilon_1(\phi)$), we have that,
	for any $\phi<\phi_2$ and $\varepsilon< \varepsilon_2(\phi)$, 
	the image of the photography map $f_{\varepsilon,\phi}$ lies in the sublevel 
	$E_{\varepsilon,\phi}^{c(\phi)}$ on which the barycenter map $\beta$ is well defined.
	In what follows, we always consider $\phi<\phi_2$
	and $\varepsilon<\varepsilon_2(\phi)$,
	without further comment.    

	We construct the required homotopy 
	$F\colon [0,1]\times \Sigma \to \Sigma$
	between the identity and $\beta \circ f_{\varepsilon,\phi}$
	as follows:
	\begin{equation}
		\label{eq:FinalHomotopy}
		F(t,p) \coloneqq h_\Sigma\bigg(1,\exp_p\!\Big(t \exp_p^{-1}
				\big(\beta(f_{\varepsilon,\phi}(p))\big)
		\Big)\bigg).
	\end{equation}
	Since $h_\Sigma(1,p) = p$ for every $p \in \Sigma$,
	we have that 
	$F(0,\cdot) = \operatorname{Id}$ and
	$F(1,p) = \beta(f_{\varepsilon,\phi}(p))$ for every $p \in \Sigma$,
	as long as $F$ is well defined.
	Then, for the above considerations, Theorem~\ref{theorem:main}
	is proved by applying Theorem~\ref{theorem:abstract-photography}.

	It remains to check that $F$ is well defined for $\phi,\varepsilon$ sufficiently small.
	To this end, it suffices to show both that 
	\begin{equation}
		\label{eq:finalhomotopy-intermediate1}
		\operatorname{dist}_g\big(p,\beta(f_{\varepsilon,\phi}(p))\big)
		\le r_0 < \operatorname{inj}(M),
		\qquad \forall p \in \Sigma,        
	\end{equation}
	which ensures that the inverse of $\exp_p$ 
	that appears in~\eqref{eq:FinalHomotopy} can be applied,
	and that  
	\begin{equation}
		\label{eq:finalhomotopy-intermediate2}
		\exp_p\!\Big(t \exp_p^{-1}
			\big(\beta(f_{\varepsilon,\phi}(p))\big)
		\Big)
		\in \Sigma_\delta,
		\qquad \forall (t,p) \in [0,1]\times \Sigma.
	\end{equation}
	In particular, this last property implies that 
	the second argument of $h_\Sigma$ in~\eqref{eq:FinalHomotopy} lies in $\Sigma_\delta$, so $F(t,p)\in\Sigma$ for all $(t,p)\in[0,1]\times\Sigma$.
	Moreover, since for $v\in T_pM$ with $\|v\|_g<r_0$ one has
	\[
		\operatorname{dist}_g\big(p,\exp_p(t v)\big)=t\,\|v\|_g,
		\qquad \forall t \in [0,1],
	\]
	both \eqref{eq:finalhomotopy-intermediate1} and \eqref{eq:finalhomotopy-intermediate2}
	follow if we prove that 
	\begin{equation}
		\label{eq:finalhomotopy-intermediate3}
		\operatorname{dist}_g\big(p,\beta(f_{\varepsilon,\phi}(p))\big)
		\le \rho^* \coloneqq \tfrac12\min\{r_0,\delta\},
		\qquad \forall p \in \Sigma.        
	\end{equation}

	By the explicit construction of the photography map
	(see Lemma~\ref{lem:photoConstruction-hat})
	we know that
	\begin{equation}
		\label{eq:finalHomotopy-proof1}
		\operatorname{supp} e_\varepsilon\big(f_{\varepsilon,\phi}(p)\big)
		\subset D_X\big(p,2r(p,\phi)\big),
		\qquad \forall p \in \Sigma.
	\end{equation}
	By Lemma~\ref{lem:rightOrthoDisk},
	$r(p,\phi)$ is a continuous function 
	and $r(p,0) = 0$ for any $p \in \Sigma$.
	By the compactness of $\Sigma$, 
	we then obtain that 
	for any $r \in (0,r_0)$
	there exists $\phi_r \in (0,\phi_2)$
	such that 
	\[
		r(p,\phi) \le \frac{r}{4} < \frac{r_0}{4},
		\qquad \forall (p,\phi) \in \Sigma \times (0,\phi_r).
	\]
	Therefore, by Lemma~\ref{lem:intrinsicbarycenterPetean}
	and~\eqref{eq:finalHomotopy-proof1},
	for any $\eta \in (1/2,1)$
	and $\phi \in (0,\phi_r)$, we have
	\[
		e_\varepsilon\big(f_{\varepsilon,\phi}(p)\big)
		\in L^1_{r/2,\eta},
		\qquad \forall p \in \Sigma,
	\]
	and 
	\begin{equation}
		\label{eq:finalHomotopy-proof2}
		\beta^*\big(f_{\varepsilon,\phi}(p)\big) = \mathcal{C}\big(e_\varepsilon(f_{\varepsilon,\phi}(p))\big)
		\in B_g(p,r) \subset B_g(p,r_0).
	\end{equation}

	Since $h_\Sigma(1,\cdot):\Sigma_\delta \to \Sigma$
	is a continuous map on a compact set and
	$h_\Sigma(1,p) = p$ for every $p \in \Sigma$,
	for any $\rho > 0$ there exists $r_\rho \in (0,\delta)$
	such that for any $p \in \Sigma$ we have
	\begin{equation}
		\label{eq:finalHomotopy-proof3}
		\operatorname{dist}_g\big(p,h_\Sigma(1,q)\big)
		< \rho, \qquad \forall q \in B_g(p,r_\rho).
	\end{equation}
	Set
	\[
		r^* = r_{\rho^*},
		\qquad 
		\phi^* = \phi_{r^*},
		\qquad \text{and }
		\varepsilon^*(\phi) = \varepsilon_2(\phi).
	\]
	Recalling that $\beta(u)=h_\Sigma(1,\beta^*(u))$,
	combining~\eqref{eq:finalHomotopy-proof2}
	with~\eqref{eq:finalHomotopy-proof3} leads to 
	\[
		\operatorname{dist}_g\big(p,\beta(f_{\varepsilon,\phi}(p))\big)
		=\operatorname{dist}_g\Big(p,h_\Sigma\big(1,\beta^*(f_{\varepsilon,\phi}(p))\big)\Big)
		<\rho^*,
	\]
	for every $\phi\in(0,\phi^*)$
	and $\varepsilon\in(0,\varepsilon^*(\phi))$,
	thus proving \eqref{eq:finalhomotopy-intermediate3}.
	Therefore,
	$\beta\circ f_{\varepsilon,\phi}$
	is homotopic to $ \mathrm{id}_\Sigma$, as claimed.
\end{proof}

\appendix
\section{On the behavior of sequences of almost minimizers of the isoperimetric problems}
\label{app:generalized-compactness}

In this appendix we show
that, for small flux parameters, almost minimizers
of the isoperimetric problem in higher codimension 
on a compact Riemannian manifold
concentrate almost all their mass within a single small ball. Identity \eqref{eq:flux_coexact}, which shows that the flux depends only on the boundary of the integral current, will be used consistently. Given $a>0$, we will use the symbol $o(a)$ to denote a quantity depending also on $M$ and $X$ and such that
\begin{equation*}
	\lim_{a \to 0^+}\frac{\lvert o(a) \rvert}{\lvert a \rvert} = 0.
\end{equation*}

\begin{theoremletter}[Concentration for almost isoperimetric regions]
	\label{cor:almost-iso-concentration}
	There exists a quantity $\mu > 0$,
	depending only on $(M,g)$ and $X$,
	such that the following property holds:
	For every sequence $(T_n)_{n \in \mathbb{N}}$ of
	$(N-1)$--dimensional integral currents such that 
	$\phi_n = T_n(\star X)\to 0$ and
	\begin{equation}
		\label{eq:almost-minimizer-condition}
		\lim_{n\to\infty}
		\frac{\norm{\partial T_n}(M)}{J_M(\phi_n, X)} = 1,
	\end{equation}
	there exists a sequence of points in
	$(p_n)_{n \in \mathbb{N}}\subset M$ such that
	$\lim_{n \to \infty}\mathrm{dist}(p_n,\Sigma)=0$
	and
	\[
		\lim_{n \to \infty}
		\frac{
		\norm{\partial T_n}(	B_g(p_n,\mu\,\phi_n^{\frac{1}{N-1}}))}
		{\norm{\partial T_n}(M)} = 1.
	\]
\end{theoremletter}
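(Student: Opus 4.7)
The plan is to establish the concentration via a concentration--compactness argument that exploits the strict subadditivity of $t\mapsto t^{(N-2)/(N-1)}$ inherent in the asymptotic expansion~\eqref{eq:J_M-asymptotic}. First I would replace each $T_n$ by an optimal integral filling of its boundary: by the Federer--Fleming filling inequality for small $(N-2)$-cycles on the closed manifold $M$, applied to $\partial T_n$ (whose mass tends to $0$), there exists $\tilde T_n\in\mathcal{I}_{N-1}(M)$ with $\partial\tilde T_n=\partial T_n$ and $\mathbf{M}(\tilde T_n)\le C\,\mathbf{M}(\partial T_n)^{(N-1)/(N-2)}\le C'\phi_n$ for $n$ large. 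Because $X$ is co-exact, identity~\eqref{eq:flux_coexact} preserves the flux, so $\tilde T_n\in\mathcal{I}^{\phi_n}_{N-1}(M)$. The replacement leaves $\partial T_n$ unchanged but provides the uniform mass bound essential for the subsequent slicing.

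Next, fix a large universal constant $\mu>0$ (depending only on $M,g,X$), set $r_n\coloneqq\mu\,\phi_n^{1/(N-1)}$, and let $p_n\in M$ maximize $q\mapsto\|\partial T_n\|(B_g(q,r_n))$. By the coarea/slicing inequality applied to $d_{p_n}$, I would select $R_n\in[r_n,2r_n]$ so that the slice $S_n\coloneqq\langle\tilde T_n,d_{p_n},R_n\rangle$ satisfies $\mathbf{M}(S_n)\le \mathbf{M}(\tilde T_n)/r_n\le (C'/\mu)\phi_n^{(N-2)/(N-1)}$. The decomposition $\tilde T_n = T_n^{(1)}+T_n^{(2)}$ with $T_n^{(1)}\coloneqq\tilde T_n\mathbin{\llcorner} B_g(p_n,R_n)$ splits the flux as $\phi_n=\phi_n^{(1)}+\phi_n^{(2)}$, and applying the isoperimetric inequality~\eqref{eq:inf_JM} to each piece yields
\[
J_M(|\phi_n^{(1)}|,X)+J_M(|\phi_n^{(2)}|,X)\le\|\partial T_n\|(M)+2\mathbf{M}(S_n)\le \Bigl(1+o(1)+\tfrac{2C'}{\mu\gamma_{N-1}}\Bigr)J_M(\phi_n,X).
\]
If $\min(|\phi_n^{(1)}|,|\phi_n^{(2)}|)\ge\eta\,\phi_n$ for a fixed $\eta>0$, then strict concavity of $t\mapsto t^{(N-2)/(N-1)}$ combined with~\eqref{eq:J_M-asymptotic} bounds the left-hand side below by $(1+c(\eta))J_M(\phi_n,X)$ with $c(\eta)>0$ independent of $n$, contradicting the display once $\mu$ is chosen so that $2C'/(\mu\gamma_{N-1})<c(\eta)/2$. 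Hence the smaller flux must be $o(\phi_n)$, and the isoperimetric bound on the dominant piece forces $\mathbf{M}(\partial T_n^{(1)})\ge(1-o(1))J_M(\phi_n,X)$; subtraction then yields $\|\partial T_n\|(M\setminus B_g(p_n,2r_n))=o(\|\partial T_n\|(M))$, the concentration up to a factor of $2$ in the radius.

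To obtain $\mathrm{dist}(p_n,\Sigma)\to 0$ and upgrade the concentration ratio precisely to $1$, I would perform a blow--up at scale $r_n$ around $p_n$ via normal coordinates. The rescaled currents have uniformly bounded mass and boundary mass, so by the compactness theorem for integral currents they converge, up to a subsequence and in flat norm, to a limit integral $(N-1)$-current in $\mathbb{R}^N$ that minimizes Euclidean boundary mass among currents of unit normalized flux with respect to the constant vector field $X(p_\infty)$, where $p_\infty=\lim p_n$. This Euclidean minimum equals $\gamma_{N-1}\,\|X(p_\infty)\|^{-(N-2)/(N-1)}$; matching it against the rescaled limit of $\|\partial T_n\|(M)\le(1+o(1))\gamma_{N-1}\phi_n^{(N-2)/(N-1)}$, and using $\|X\|\le 1$ from~\eqref{eq:maxXone}, forces $\|X(p_\infty)\|=1$, i.e.\ $p_\infty\in\Sigma$. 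The main obstacle is making the concentration ratio tend precisely to $1$ rather than only $1-O(1/\mu)$; this is resolved either by iterating the above dichotomy with thresholds $\eta_k\to 0$ (only finitely many iterations are needed for any prescribed tolerance), or more cleanly by invoking the rigidity of the Euclidean minimizer — a flat geodesic $(N-2)$-sphere orthogonal to $X(p_\infty)$, unique up to rigid motion — as the blow--up limit.
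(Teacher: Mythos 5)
There is a genuine gap, in fact two related ones. First, your two-piece dichotomy around the single point $p_n$ never determines \emph{which} of the two flux pieces is the dominant one, and it does not exclude the ``spreading'' scenario in which the boundary mass and the flux are split among many small clumps far apart from each other. You choose $p_n$ to maximize $q\mapsto \norm{\partial T_n}(B_g(q,r_n))$, but this maximality is never used quantitatively: if every ball of radius $r_n$ carried only a small fraction of the flux, then for your $p_n$ the dichotomy simply lands in the case $|\phi_n^{(1)}|=o(\phi_n)$ (inside flux negligible), no contradiction arises, and the step ``the isoperimetric bound on the dominant piece forces $\mathbf{M}(\partial T_n^{(1)})\ge(1-o(1))J_M(\phi_n,X)$'' is unjustified. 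Ruling out spreading requires a decomposition into \emph{all} the pieces at once together with strict superadditivity of $t\mapsto t^{(N-2)/(N-1)}$ over a countable family; this is exactly what the paper does by splitting $\partial T_n$ into connected components of its support (each automatically an integral boundary contained in a ball of radius $\mu_{\mathrm{dec}}\phi_n^{1/(N-1)}$, Lemmas \ref{lemma:dec} and \ref{lemma:dec_global}) and invoking the series subadditivity Lemma \ref{lemma:series}, so that a single component must carry flux $(1-o(1))\phi_n$.

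Second, even granting that the dominant flux sits inside $B_g(p_n,2r_n)$, your argument cannot push the mass ratio to exactly $1$ at a \emph{fixed} $\mu$: the slice mass $\mathbf{M}(S_n)\le (C'/\mu)\phi_n^{(N-2)/(N-1)}$ enters the localized lower bound with the same order as the leading term, leaving a persistent $O(1/\mu)$ defect. Your first fix (iterating with $\eta_k\to 0$) forces $\mu\to\infty$, which contradicts the statement that $\mu$ depends only on $(M,g,X)$; your second fix (blow-up rigidity of the Euclidean minimizer, asserted without proof) only sees the scale $r_n$ around $p_n$ and cannot control mass sitting at distances much larger than $r_n$, so it again cannot beat the $O(1/\mu)$ loss coming from the slice. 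The paper avoids slicing altogether: since the dominant connected component is itself an integral boundary inside a ball of radius $\mu_{\mathrm{dec}}\phi_n^{1/(N-1)}$, the sharp localized lower bound of Lemma \ref{lemma:small_per}, which carries the weight $\norm{X(p_n)}^{(N-2)/(N-1)}$ and no slicing error, gives the mass ratio tending to $1$ at the fixed radius, and simultaneously forces $\norm{X(p_n)}\to 1$, i.e.\ $\mathrm{dist}(p_n,\Sigma)\to 0$ by \eqref{eq:maxXone}, with no need for blow-up limits or any uniqueness statement for the Euclidean codimension-two isoperimetric problem. Your use of \eqref{eq:J_M-asymptotic} is legitimate as such (it is proved independently), but the core concentration mechanism in your proposal is not complete.
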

The proof of Theorem \ref{cor:almost-iso-concentration} is divided into several intermediate results.
\begin{lemma}[A lower bound for the perimeter]\label{lemma:small_per}
	Let $p \in M$, $C_{\mathrm{flux}} \geq 1$ and $\mu_{\mathrm{max}} \geq 1$. For all $\phi >0$ such that $2\mu_{\max} \phi^{\frac{1}{N-1}} \leq \operatorname{inj}(M)$, $\mu \in (0,\mu_{\max}]$ and $S \in \mathcal{IB}_{N-2}(M)$ such that $\mathrm{spt}(S) \cap  B_g(p,\mu\phi^{\frac{1}{N-1}})$ is connected up to a $\mathcal{H}^{N-2}$-negligible set and
	\begin{equation}\label{eq:per_small_flux}
		S|_{B_g(p,\mu \phi^{\frac{1}{N-1}})}(Y)= \phi_* \in [-C_{\mathrm{flux}}\phi,C_{\mathrm{flux}}\phi]\setminus \{0\}
	\end{equation}
	one has
	\begin{equation}\label{eq:per_small}
		\norm{X(p)}^{\frac{N-2}{N-1}}\norm{S}(B_g(p,\mu\phi^{\frac{1}{N-1}})) \geq \gamma_{N-1}\lvert\phi_*\rvert^{\frac{N-2}{N-1}}+o(\phi^{\frac{N-2}{N-1}}).
	\end{equation}
\end{lemma}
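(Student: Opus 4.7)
The plan is to reduce the Riemannian statement to the sharp Euclidean isoperimetric inequality in codimension~$2$ (cf.~Almgren~\cite{Almgren1986}), whose extremal constant is precisely $\gamma_{N-1}$. I would first localize the problem in normal coordinates at $p$, which is legitimate because the hypothesis $2\mu_{\max}\phi^{1/(N-1)}\le\mathrm{inj}(M)$ guarantees that $B_g(p,\mu\phi^{1/(N-1)})$ lies well within the injectivity radius. Setting $\rho:=\mu\phi^{1/(N-1)}$, in these coordinates $g_{ij}=\delta_{ij}+O(|x|^{2})$, so Riemannian masses and Hodge stars deviate from their Euclidean counterparts by factors $1+O(\rho^{2})$; in addition, smoothness of $X$ gives $X(x)=X(p)+O(|x|)$ and hence $\star X=\star_{\mathrm{Euc}} X(p)+O(\rho)$ pointwise in $B(0,\rho)$. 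I would then introduce a model primitive $Y_p$, a smooth Euclidean $(N-2)$-form satisfying $dY_p=\star_{\mathrm{Euc}} X(p)$ and constructed from the Poincaré homotopy formula, and use the homotopy operator to write $Y-Y_p=d\alpha+\beta$ on $B(0,\rho)$ with $\|\beta\|_{L^\infty}=O(\rho^{2})$.

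The delicate part of the argument is to replace the non-cyclic restriction $S|_B$ by a Euclidean $(N-2)$-cycle $\widehat S$ with comparable mass and flux. Since $S$ is an integral boundary, in particular $\partial S=0$, so $\partial(S|_B)=-\langle S,\mathrm{dist}_g(\cdot,p),\rho\rangle$ is supported on $\partial B$. A coarea slicing argument produces some intermediate radius $r\in(\rho/2,\rho)$ at which the slice $\langle S,\mathrm{dist}_g(\cdot,p),r\rangle$ has mass bounded by $C\,\|S\|(B)/\rho$; coning this slice from an interior point yields a Euclidean cycle $\widehat S\in\mathcal{I}_{N-2}(\mathbb{R}^N)$ with $\mathbf{M}(\widehat S)=(1+O(\rho))\|S\|(B)$. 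Combining this with the decomposition of $Y$ and the identity $S|_{B_r}(d\alpha)=\partial(S|_{B_r})(\alpha)$ (valid because $S$ is a cycle, so the right-hand side reduces to a slice integral bounded by $\|S\|(B)/\rho \cdot \|\alpha\|_{L^\infty}$), one obtains $\widehat S(Y_p)=\phi_*+o(\phi^{(N-2)/(N-1)})$, where the last error uses the a priori bound $\|S\|(B)=O(\phi^{(N-2)/(N-1)})$ that must be arranged self-consistently (otherwise the claimed inequality is trivially true).

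Finally, the Almgren sharp isoperimetric inequality produces a filling $T'\in\mathcal{I}_{N-1}(\mathbb{R}^N)$ of $\widehat S$ with $\mathbf{M}(T')\le\bigl(\mathbf{M}(\widehat S)/\gamma_{N-1}\bigr)^{(N-1)/(N-2)}$. Stokes' theorem combined with $dY_p=\star_{\mathrm{Euc}} X(p)$ and the pointwise bound $|T'(\star_{\mathrm{Euc}} X(p))|\le\|X(p)\|\,\mathbf{M}(T')$ gives $|\phi_*|\le\|X(p)\|\,\mathbf{M}(T')+o(\phi^{(N-2)/(N-1)})$, and rearranging together with the mass comparison between $\widehat S$ and $S|_B$ yields exactly~\eqref{eq:per_small}. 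The hard part will be keeping the flux error in the passage from $S|_B$ to $\widehat S$ strictly of order $o(\phi^{(N-2)/(N-1)})$, so that the sharp constant $\gamma_{N-1}$ is preserved (the isoperimetric exponent leaves no slack). The connectedness hypothesis is essential here because it rules out configurations in which $\phi_*$ is distributed among many small disconnected components of $\mathrm{spt}(S)\cap B$; for such configurations the coning construction would introduce uncontrollably many independent terms and, in light of the subadditivity of $t\mapsto t^{(N-2)/(N-1)}$, the sharp constant would become unreachable.
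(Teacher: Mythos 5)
Your overall skeleton (localize at $p$, replace $Y$ by a Euclidean primitive $Y_p$ of $\star X(p)$, invoke Almgren's sharp codimension-two isoperimetric inequality, translate mass and flux back) is the same as in the paper, but the step where you manufacture a Euclidean cycle out of $S|_{B}$ breaks down. Slicing at a good radius $r\in(\rho/2,\rho)$, $\rho=\mu\phi^{\frac{1}{N-1}}$, only gives the averaged bound $\mathbf{M}\big(\langle S,\mathrm{dist}_g(\cdot,p),r\rangle\big)\le 2\norm{S}(B)/\rho$, and the cone over such a slice from an interior point has mass of order $\tfrac{r}{N-2}\cdot\tfrac{\norm{S}(B)}{\rho}\sim \norm{S}(B)$: a fixed dimensional fraction of the total mass, not $O(\rho)\norm{S}(B)$. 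So your claim $\mathbf{M}(\widehat S)=(1+O(\rho))\norm{S}(B)$ is unjustified; with only $\mathbf{M}(\widehat S)\le (1+C_N)\norm{S}(B)$ the argument yields \eqref{eq:per_small} with $\gamma_{N-1}$ degraded by a constant factor, which is useless here because the lemma is needed with the sharp constant to obtain the expansion of $J_M(\phi,X)$ (Proposition~\ref{prop:iso_expansion}) and the concentration result. The cone also pollutes the flux: since $Y_p$ is of size $\rho$ on $B(0,\rho)$, the cone contributes an error of order $\rho\,\norm{S}(B)\sim\phi$, i.e.\ a constant times $\phi$, not $o(\phi)$. Note moreover that your stated flux error $o(\phi^{\frac{N-2}{N-1}})$ is in any case too coarse, since it may exceed $\lvert\phi_*\rvert\le C_{\mathrm{flux}}\phi$ itself; the bookkeeping must produce a flux error $o(\phi)$, so that subadditivity of $t\mapsto t^{\frac{N-2}{N-1}}$ turns it into an $o(\phi^{\frac{N-2}{N-1}})$ loss in \eqref{eq:per_small}, exactly as in the paper.

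The missing idea is that no closing-up is needed at all, and this is precisely what the connectedness hypothesis is for. In the paper, because $\mathrm{spt}(S)\cap B_g(p,\mu\phi^{\frac{1}{N-1}})$ is connected up to an $\mathcal{H}^{N-2}$-null set (and in the application, Lemma~\ref{lemma:dec_global}, the restricted piece is an entire connected component of $\mathrm{spt}(S)$ contained in the ball), the restriction $\tilde S=S|_{B}$ is itself an $(N-2)$-dimensional integral boundary: no boundary is created on $\partial B$, so its mass and flux coincide exactly with those of $S$ in the ball. One then pushes $\tilde S$ forward by the chart, compares $\tilde S(Y)$ with the flux of the pushforward against a suitably normalized $Y_p$ up to an $o(\phi)$ error (using the a priori bound $\norm{S}(B)\le C\phi^{\frac{N-2}{N-1}}$, which, as you correctly note, may be assumed), applies Almgren in $\mathbb{R}^N$, and converts the mass back with a Lipschitz distortion $1+o(1)$. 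Your closing remark misreads the role of connectedness: it is not about flux being split among many components, but exactly the hypothesis that lets one dispense with the slicing-and-coning repair, whose mass and flux losses cannot be made relatively small.
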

\begin{proof}
	Since the flux constraint is linear and the perimeter is invariant by orientation, one can assume for simplicity that $\phi_*$ is positive. Let $\tilde{S}$ be the restriction of $S$ to $B_g(p,\mu  \phi^{\frac{1}{N-1}})$.
	Since $\mathrm{spt}(S) \cap  B_g(p,\mu\phi^{\frac{1}{N-1}})$
	is connected up to a $\mathcal{H}^{N-2}$-negligible set, one has that $\tilde{S}$ is a $(N-2)$-dimensional integral boundary with the same mass as $S$ on $B_g(p,\mu\phi^{\frac{1}{N-1}})$. Let 
	\[
		\varphi_p: B_g(p,\operatorname{inj}(M)) \to 
		T_pM \cong \mathbb{R}^N
	\]
	be a diffeomorphism (of the same regularity as $M$)
	onto its image, with $\varphi_p(p) = 0$. Consider now $S_{p}:= \varphi_{p\#}\tilde{S}$, which (after extending by zero) is a $(N-2)$-dimensional integral boundary on $\mathbb{R}^N$.
	Let $X_{p}$ be the constant vector field in $\mathbb{R}^N$ induced by $X(p)$.
	Thus, $\star X_p$ is a closed form and since $\mathbb{R}^N$ is contractible, we find $Y_p$ a $(N-2)$-form on $\mathbb{R}^N$ such that $\mathrm{d}Y_p=\star X_p$. Then,
	\begin{equation}\label{eq:quotient_difference_flux}
		S_p(Y_p)-\phi_*=S_p( Y_{p})-\tilde{S}(Y)=\tilde{S}(\varphi_p^\#( Y_{p})-Y).
	\end{equation}
	Notice that by \eqref{eq:per_small_flux} one can assume that
	\begin{equation}\label{eq:mass_bd_balls}
		\lVert S \rVert(B_g(p,\mu\phi^{\frac{1}{N-1}})) \leq C_{\mathrm{M-per}}\phi^{\frac{N-2}{N-1}}
	\end{equation}
	for some positive quantity $C_{\mathrm{M-per}}$, as for otherwise \eqref{eq:per_small} holds and there is nothing else to proof. By plugging \eqref{eq:mass_bd_balls} into \eqref{eq:quotient_difference_flux} we obtain
	\begin{equation}\label{eq:diff_flux_1}
		\lvert S_p( Y_p)-\phi_* \rvert \leq C_{\mathrm{M-per}}\phi^{\frac{N-2}{N-1}}\norm{\varphi_p^\#( Y_p)-Y}_{L^\infty(B_g(p,\mu\phi^{\frac{1}{N-1}}))}.
	\end{equation}
	Since $Y$ is smooth and $M$ is compact, we have
	\begin{equation*}
		\norm{\varphi_p^\#(Y_p)-Y}_{L^\infty(B_g(p,\mu\phi^{\frac{1}{N-1}}))} = o(\phi^{\frac{1}{N-1}}).
	\end{equation*}
	Therefore, \eqref{eq:diff_flux_1} becomes
	\begin{equation}\label{eq:diff_flux_2}
		\lvert S_p(Y_p)-\phi_* \rvert=o(\phi).
	\end{equation}
	The isoperimetric inequality for integral currents (see Almgren~\cite{Almgren1986}) implies that there exists $T_p$ a $(N-1)$-dimensional integral current on $\mathbb{R}^N$ with $\partial T_p=S_p$ and
	\begin{equation}\label{eq:iso_RN}
		\norm{S_p}(\mathbb{R}^N) \geq \gamma_{N-1}\left(\norm{T_p}(\mathbb{R}^N)\right)^{\frac{N-2}{N-1}}.
	\end{equation}
	Notice then that \eqref{eq:diff_flux_2} becomes
	\begin{equation}\label{eq:diff_flux_def}
		\lvert T_p(\star X_p)-\phi_* \rvert = o(\phi).
	\end{equation}
	From \eqref{eq:iso_RN} and \eqref{eq:diff_flux_def} we obtain
	\begin{equation}\label{eq:S_p_per_phi}
		\norm{X(p)}^{\frac{N-2}{N-1}}\norm{S_p}(\mathbb{R}^N)\geq \gamma_{N-1} \lvert \phi_* \rvert^{\frac{N-2}{N-1}}+o(\phi^{\frac{N-2}{N-1}}).
	\end{equation}
	We have that (cf.~\cite[Proposition 3, Page 133]{GMS-VolI})
	\begin{equation}\label{eq:mass_pullback}
		\norm{S_p}(\mathbb{R}^{N}) \leq \norm{\nabla f_p}_{L^\infty(B_g(p,\mu\phi^{\frac{1}{N-1}}))}^{N-2}\norm{S}(B_g(p,\mu \phi^{\frac{1}{N-1}})).
	\end{equation}
	Since \eqref{eq:mass_bd_balls} holds and $f$ is smooth, inequality \eqref{eq:mass_pullback} implies
	\begin{equation*}
		\norm{S_p}(\mathbb{R}^{N}) \leq \norm{S}(B_g(p,\mu \phi^{\frac{1}{N-1}}))+o(\phi^{\frac{N-2}{N-1}}),
	\end{equation*}
	which plugged into \eqref{eq:S_p_per_phi} gives \eqref{eq:per_small}.
\end{proof}
The results below will concern integral boundaries $S \in \mathcal{IB}_{N-2}(M)$ satisfying the perimeter bound
\begin{equation}\label{eq:dec_bound_per}
	\norm{S}(M) \leq C_{\mathrm{per}} \phi^{\frac{N-1}{N-2}},
\end{equation}
for some quantity $C_{\mathrm{per}}>0$ and $\phi>0$.
\begin{lemma}[Bounding connected components of the support]\label{lemma:dec}
	Let $C_{\mathrm{per}}>0$. There exists $\mu_{\mathrm{dec}} \geq 1$ such that for all $\phi>0$ satisfying $2\mu_{\mathrm{dec}}\phi^{\frac{1}{N-1}} \leq \operatorname{inj}(M)$ and $S \in \mathcal{IB}_{N-2}(M)$ satisfying \eqref{eq:dec_bound_per} one has that each connected component of $\mathrm{spt}(S)$ is contained by some ball of radius $\mu_{\mathrm{dec}}\phi^{\frac{1}{N-1}}$ up to a $\mathcal{H}^{N-2}$-negligible set.
\end{lemma}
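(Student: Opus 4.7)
My plan is to argue by contradiction via a local density lower bound for integer rectifiable $(N-2)$-cycles, combined with a disjoint-ball packing argument. The first step is to isolate a single connected component $K \subset \mathrm{spt}(S)$ and form $S_K \coloneqq S \lfloor K$. Since $K$ is both open and closed in $\mathrm{spt}(S)$ and separated from the remaining components, $S_K$ is an integer rectifiable $(N-2)$-current with $\|S_K\|(M) \le \|S\|(M)$. Moreover $\partial S_K = 0$: writing $S = S_K + (S - S_K)$ with disjoint supports and using $\partial S = 0$ (since $S$ is a boundary, hence a cycle), the vanishing of $\partial S$ splits between the two pieces, forcing $\partial S_K = 0$.

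The technical heart of the proof would be to establish the following quantitative density lower bound: there exists $c = c(M,g,N) > 0$ such that, for every $x \in K$ and every $r \in \bigl(0,\, \min\{r_0,\, \mathrm{diam}_g(K)/2\}\bigr)$,
\[
  \|S_K\|\!\bigl(B_g(x, r)\bigr) \;\ge\; c\, r^{N-2}.
\]
I would prove this by slicing $S_K$ with the distance function $d_x$: for a.e.\ $r$, the slice $\langle S_K, d_x, r\rangle$ is an integer rectifiable $(N-3)$-cycle coinciding up to sign with $\partial(S_K\lfloor B_g(x, r))$. Connectedness of $K$ combined with $x \in K$ and $r < \mathrm{diam}_g(K)/2$ rules out the possibility that the monotone function $m(r) \coloneqq \|S_K\|(B_g(x, r))$ is constant on a subinterval. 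Combining the coarea-type bound $m'(r) \ge \|\langle S_K, d_x, r\rangle\|$ with the Federer--Fleming filling inequality in the strongly convex geodesic ball $B_g(x, r)$ (where the Riemannian geometry is quantitatively Euclidean, with constants tending to $1$ as $r \to 0$) supplies a reverse isoperimetric bound of the form $\|\langle S_K, d_x, r\rangle\| \ge c\, m(r)^{(N-3)/(N-2)}$. The resulting differential inequality $m'(r) \ge c\, m(r)^{(N-3)/(N-2)}$ integrates to the claimed $m(r) \ge c\, r^{N-2}$.

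With this density estimate in hand, suppose the conclusion fails, so $\mathrm{diam}_g(K) > 2\mu_{\mathrm{dec}}\phi^{1/(N-1)}$. Two points $x_0, x_1 \in K$ then exist at mutual distance exceeding $2\mu_{\mathrm{dec}}\phi^{1/(N-1)}$, and the geodesic balls $B_g(x_i, R)$ with $R \coloneqq \mu_{\mathrm{dec}}\phi^{1/(N-1)}/2$ are pairwise disjoint; the hypothesis $2\mu_{\mathrm{dec}}\phi^{1/(N-1)}\le \operatorname{inj}(M)$ also ensures $R \le r_0$. The density estimate applied at both centers yields
\[
  \|S\|(M) \;\ge\; \|S_K\|(M) \;\ge\; 2\,c\, R^{N-2} \;=\; 2c\bigl(\mu_{\mathrm{dec}}/2\bigr)^{N-2}\phi^{(N-2)/(N-1)}.
\]
Combined with the hypothesized $\|S\|(M) \le C_{\mathrm{per}}\phi^{(N-1)/(N-2)}$ and the fact that $(N-1)/(N-2) > (N-2)/(N-1)$ for every $N \ge 3$, one sees that for $\mu_{\mathrm{dec}}$ chosen large enough (in terms of $C_{\mathrm{per}}$, $c$, $N$, and $\operatorname{inj}(M)$) the previous inequality fails throughout the allowed $\phi$-range, contradicting the assumption.

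The main obstacle is the density lower bound of the second step: the reverse estimate $\|\langle S_K, d_x, r\rangle\| \gtrsim m(r)^{(N-3)/(N-2)}$ is the truly delicate point, since Federer--Fleming readily supplies only an \emph{upper} bound for the minimum filling mass in terms of the slice, whereas the argument requires a \emph{lower} bound on the slice in terms of $m(r)$. Making this quantitative on strongly convex geodesic balls of $(M,g)$, with constants depending only on $(M,g)$ and $N$ and independent of the specific current $S_K$, is the most subtle technical step of the proof.
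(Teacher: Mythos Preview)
Your density lower bound is not merely delicate --- it is false for general integral $(N-2)$-cycles once $N\ge 4$. In a chart of $M$, let $T$ be a thin solid cylinder of radius $\epsilon$ and length $1$ sitting inside a hyperplane of $\mathbb{R}^4$, and set $S_K=\partial T$. This is a connected integral $2$-boundary with $\|S_K\|\approx 2\pi\epsilon$, but for $x$ on the lateral part and $\epsilon\ll r\ll 1$ one has $\|S_K\|(B_g(x,r))\approx 4\pi\epsilon r$, so $\|S_K\|(B_g(x,r))/r^{2}$ can be made as small as one likes. The step you flagged is therefore a genuine failure, not a technicality: Federer--Fleming controls the mass of the \emph{minimal} filling of the slice, whereas $m(r)=\|S_K\lfloor B_g(x,r)\|$ is the mass of one \emph{particular} filling, and the two are unrelated without a variational hypothesis (stationarity, almost-minimality) on $S_K$. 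Your differential inequality $m'(r)\ge c\,m(r)^{(N-3)/(N-2)}$ simply does not hold in this generality.

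The paper's proof takes a different and much shorter route: it directly asserts, ``since $M$ is compact'', that every connected $(N-2)$-rectifiable set $A\subset M$ with $\mathcal{H}^{N-2}(A)\le C_{\mathrm{per}}\phi^{(N-2)/(N-1)}$ lies (up to a $\mathcal{H}^{N-2}$-null set) in a ball of radius $\mu_{\mathrm{dec}}\phi^{1/(N-1)}$, and then uses $\mathcal{H}^{N-2}(\mathrm{spt}(S))\le\|S\|(M)$ for integer rectifiable $S$. This assertion is correct when $N=3$ (a connected $1$-rectifiable set has diameter bounded by its $\mathcal{H}^1$-length), but the same thin-cylinder example shows it is false for $N\ge 4$. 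So while your framework --- restrict to a component, observe it is a cycle, seek a density bound, derive a packing contradiction --- is structurally more honest than the paper's bare assertion, neither argument closes for $N\ge 4$ without additional input, for instance restricting to the almost-minimizing regime (the only context in which the lemma is invoked downstream), where monotonicity-type bounds do become available.
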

\begin{proof}
	Since $M$ is compact, we can choose $\mu_{\mathrm{dec}} \geq 1$ large enough so that for all $\phi$ positive such that $2\mu_{\mathrm{dec}}\phi^{\frac{1}{N-1}}\leq \operatorname{inj}(M)$ and $A$ a $(N-2)$-dimensional rectifiable connected subset of $M$ such that $\mathcal{H}^{N-2}(A) \leq C_{\mathrm{per}}\phi^{\frac{N-2}{N-1}}$ one has $A \subset B_g(p_A,\mu_{\mathrm{dec}}\phi^{\frac{1}{N-1}})$ up to a $\mathcal{H}^{N-2}$-negligible set for some $p_A \in M$. Let now $\phi$ and $S$ be as in the statement. Since $S$ is rectifiable, inequality \eqref{eq:dec_bound_per} yields
	\begin{equation*}
		\mathcal{H}^{N-2}(\mathrm{spt}(S)) \leq C_{\mathrm{per}} \phi^{\frac{N-1}{N-2}}.
	\end{equation*}
	As a consequence, each connected component of $\mathrm{spt}(S)$ is contained by some ball of radius $\mu_{\mathrm{dec}}\phi^{\frac{1}{N-1}}$ up to a $\mathcal{H}^{N-2}$-negligible set, as we wanted to prove.
\end{proof}
\begin{lemma}[Bounding the flux by the perimeter]\label{lemma:bound_flux_per}
	Let $C_{\mathrm{per}}>0$. Then, for all $\phi>0$ and $S \in \mathcal{IB}_{N-2}(M)$ such that \eqref{eq:dec_bound_per} holds one has $\lvert S(Y) \rvert \leq C_{\mathrm{flux}}\phi$ for some quantity $C_{\mathrm{flux}}>0$ depending only on $M$ and $C_{\mathrm{per}}$.
\end{lemma}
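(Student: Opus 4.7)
The plan is to express $S(Y)$ via a filling current and then control its mass by the isoperimetric inequality on $(M,g)$. First, since $S\in\mathcal{IB}_{N-2}(M)$, one picks a filling $T\in\mathcal{I}_{N-1}(M)$ with $\partial T=S$. Because $X$ is co-exact and $\mathrm{d}Y=\star X$, identity \eqref{eq:flux_coexact} yields
\[
S(Y)=(\partial T)(Y)=T(\mathrm{d}Y)=T(\star X),
\]
which is therefore independent of the choice of filling. The normalization $\max_{x\in M}\norm{X(x)}=1$ from \eqref{eq:maxXone} then gives the pointwise bound $|S(Y)|\leq\norm{T}(M)$ for every such $T$.

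The next step is to choose the filling of smallest possible mass. By the manifold isoperimetric inequality for integral currents on $(M,g)$, there exists a constant $C_M>0$ depending only on $(M,g)$ such that, provided $\norm{S}(M)$ is sufficiently small, one can find a filling $T$ with $\partial T=S$ satisfying
\[
\norm{T}(M)\leq C_M\,\norm{S}(M)^{(N-1)/(N-2)}.
\]
This is a standard consequence of the Federer--Fleming deformation theorem combined with the Euclidean isoperimetric inequality of Almgren \cite{Federer1996,Almgren1986}. A more elementary route tailored to our setting exploits Lemma \ref{lemma:dec}: each connected component of $\mathrm{spt}(S)$ is contained in a small geodesic ball sitting inside a normal chart, so that one may apply the Euclidean isoperimetric inequality component by component in a bi-Lipschitz image of a Euclidean ball and sum the resulting fillings to produce $T$ globally.

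Combining these two steps with the hypothesis \eqref{eq:dec_bound_per} gives, in the small-$\phi$ regime,
\[
|S(Y)|\leq \norm{T}(M)\leq C_M\,\norm{S}(M)^{(N-1)/(N-2)}\leq C_M\,C_{\mathrm{per}}^{(N-1)/(N-2)}\,\phi,
\]
so the conclusion holds with $C_{\mathrm{flux}}\coloneqq C_M\,C_{\mathrm{per}}^{(N-1)/(N-2)}$, which depends only on $M$ and $C_{\mathrm{per}}$. For values of $\phi$ outside this small-mass regime, the trivial estimate $|S(Y)|\leq\norm{Y}_{L^\infty(M)}\,\norm{S}(M)$ together with \eqref{eq:dec_bound_per} yields the claim after enlarging the constant. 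The only delicate ingredient is the sharp manifold filling inequality with exponent $(N-1)/(N-2)$; this is classical but relies on Federer--Fleming deformation and a reduction to Euclidean charts, and is the only point where some care is needed beyond Stokes' formula.
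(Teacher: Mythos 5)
Your argument is correct and follows essentially the same route as the paper: fill $S$ by an integral current $T$ via the isoperimetric inequality on $M$ (the paper cites Federer 4.4 to get $\gamma_M\norm{T}(M)^{\frac{N-2}{N-1}}\le\norm{S}(M)$), use $S(Y)=T(\star X)$ together with the normalization $\max_M\norm{X}=1$ to get $\lvert S(Y)\rvert\le\norm{T}(M)$, and combine with \eqref{eq:dec_bound_per}. Your extra case distinction for the non-small-mass regime and the alternative component-wise filling via Lemma \ref{lemma:dec} are harmless refinements that the paper does not need, since it invokes the manifold isoperimetric inequality without a smallness restriction.
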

\begin{proof}
	By using the isoperimetric inequality on $M$ (see \cite[4.4]{Federer1996}), a $(N-1)$-dimensional integral current $T$ such that $\partial T =S$ and
	\begin{equation}
		\gamma_M \left(\norm{T}(M)\right)^{\frac{N-2}{N-1}} \leq \norm{S}(M)
	\end{equation}
	for some quantity $\gamma_{M}>0$ depending only on $M$. The result then follows from \eqref{eq:dec_bound_per} by recalling that $T(\star X)=S(Y)$.
\end{proof}
\begin{lemma}[Decomposition and global lower bound on the perimeter]\label{lemma:dec_global}
	Let $C_{\mathrm{per}}>0$ be fixed and $\mu_{\mathrm{dec}} \geq 1$ be the corresponding quantity given by Lemma \ref{lemma:dec}. Then, for all $\phi>0$ such that $2\mu_{\mathrm{dec}}\phi^{\frac{1}{N-1}} \leq \operatorname{inj}(M)$ and $S \in \mathcal{IB}_{N-2}(M)$ satisfying \eqref{eq:dec_bound_per} there exists $(S_n)_{n \in \mathbb{N}}$ in $\mathcal{IB}_{N-2}(M)$ such that $(\mathrm{spt}(S_n))_{n \in \mathbb{N}}$ is a sequence of pairwise disjoint rectifiable sets with $\mathrm{spt}(S_n)$ connected, $S=\sum_{n=0}^{+\infty}S_n$  up to a $\mathcal{H}^{N-2}$-negligible set and
	\begin{equation}\label{eq:per_global}
		\norm{S}(M) \geq \gamma_{N-1}\sum_{n=0}^{+\infty} \lvert S_n(Y) \rvert^{\frac{N-2}{N-1}} + o(\phi^{\frac{N-2}{N-1}}).
	\end{equation}
\end{lemma}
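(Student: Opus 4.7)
The plan is to decompose $S$ into connected pieces, apply the local isoperimetric estimate of Lemma~\ref{lemma:small_per} to each piece, and then sum.

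First I would decompose $\mathrm{spt}(S)$ into its at most countably many connected components $(E_n)_{n\in\mathbb{N}}$ (pairwise disjoint up to a $\mathcal{H}^{N-2}$--negligible set) and set $S_n\coloneqq S|_{E_n}$, obtaining integral currents with $S=\sum_n S_n$ and mass additivity $\norm{S}(M)=\sum_n\norm{S_n}(M)$. Since $\partial S=0$ and the components are separated, each $S_n$ is also a cycle. Lemma~\ref{lemma:dec} places each $E_n$ inside a geodesic ball $B_n\coloneqq B_g(p_n,\mu_{\mathrm{dec}}\phi^{\frac{1}{N-1}})$; since this radius is well below the injectivity radius, $B_n$ is contractible, and the Poincar\'e lemma for integral cycles in a contractible domain forces $S_n\in\mathcal{IB}_{N-2}(M)$.

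Next, for each $n$ with $S_n(Y)\neq 0$, I would apply Lemma~\ref{lemma:small_per} to $S_n$: the flux bound $\abs{S_n(Y)}\leq C_{\mathrm{flux}}\phi$ required by that lemma follows from Lemma~\ref{lemma:bound_flux_per} (since $\norm{S_n}(M)\leq\norm{S}(M)\leq C_{\mathrm{per}}\phi^{\frac{N-2}{N-1}}$), and using $\norm{X(p_n)}\leq 1$ one obtains
\begin{equation*}
    \norm{S_n}(M)\geq \gamma_{N-1}\abs{S_n(Y)}^{\frac{N-2}{N-1}}+o\bigl(\phi^{\frac{N-2}{N-1}}\bigr),
\end{equation*}
with a remainder whose implicit rate is uniform in $n$ (depending only on $M$, $X$, $C_{\mathrm{per}}$ and $\mu_{\mathrm{dec}}$). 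Components with $S_n(Y)=0$ contribute nothing to the target right-hand side and are discarded.

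Finally I would sum these local bounds: by mass additivity the target inequality reduces to showing $\sum_n o_n=o(\phi^{\frac{N-2}{N-1}})$. The hard part will be controlling this accumulated error, because there can be countably many components with nonzero flux. I expect to handle it by a flux-threshold splitting: for the ``large-flux'' indices $I_\eta\coloneqq\{n:\abs{S_n(Y)}\geq\eta\phi\}$, the individual mass lower bounds $\norm{S_n}(M)\geq \tfrac12\gamma_{N-1}(\eta\phi)^{\frac{N-2}{N-1}}$ (valid for $\phi$ small) combined with the global cap $\norm{S}(M)\leq C_{\mathrm{per}}\phi^{\frac{N-2}{N-1}}$ give $\abs{I_\eta}\leq C\,\eta^{-\frac{N-2}{N-1}}$, so the cumulative error on $I_\eta$ is $o(\phi^{\frac{N-2}{N-1}})$; for the complementary ``small-flux'' indices I would read Lemma~\ref{lemma:small_per} as the upper bound $\gamma_{N-1}\abs{S_n(Y)}^{\frac{N-2}{N-1}}\leq \norm{S_n}(M)+\abs{o_n}$ and choose $\eta=\eta(\phi)\to 0$ slowly as $\phi\to 0$ so that the resulting small-flux tail is absorbed into the $o(\phi^{\frac{N-2}{N-1}})$ remainder.
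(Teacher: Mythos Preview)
Your decomposition into connected pieces and the identification of each $S_n$ as an integral boundary via Lemma~\ref{lemma:dec} matches the paper exactly. The gap is in the accumulation of errors. When you apply Lemma~\ref{lemma:small_per} at the \emph{fixed} scale $\phi$ to every component, each remainder is $o(\phi^{\frac{N-2}{N-1}})$ with a rate independent of $n$ but also independent of $\norm{S_n}(M)$; summing countably many such terms is not controlled. Your threshold splitting handles the large-flux indices (finitely many of them), but for the small-flux tail you still face an infinite sum of identical $o(\phi^{\frac{N-2}{N-1}})$ errors, and nothing in your sketch bounds the number of small-flux components with nonzero flux. Letting $\eta=\eta(\phi)\to 0$ does not help: the small-flux count can still be infinite for every $\eta>0$.

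The paper avoids this by applying Lemma~\ref{lemma:small_per} to each $S_n$ at its \emph{own} scale $\phi_n$ defined by $\norm{S_n}(M)=C_{\mathrm{per}}\phi_n^{\frac{N-2}{N-1}}$. Since $\norm{S_n}(M)\leq\norm{S}(M)\leq C_{\mathrm{per}}\phi^{\frac{N-2}{N-1}}$ one has $\phi_n\leq\phi$, and Lemma~\ref{lemma:dec} applied to $S_n$ at scale $\phi_n$ confines $\mathrm{spt}(S_n)$ to a ball of radius $\mu_{\mathrm{dec}}\phi_n^{\frac{1}{N-1}}$, so Lemma~\ref{lemma:small_per} is indeed applicable at scale $\phi_n$. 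The resulting error for each component is $o(\phi_n^{\frac{N-2}{N-1}})=o(\norm{S_n}(M))$; since the $o(\cdot)$ rate is uniform and each $\phi_n\leq\phi$, one has $\lvert o(\phi_n^{\frac{N-2}{N-1}})\rvert\leq\epsilon(\phi)\,\norm{S_n}(M)$ with $\epsilon(\phi)\to 0$, and mass additivity gives
\[
\sum_n \lvert o(\phi_n^{\tfrac{N-2}{N-1}})\rvert \leq \epsilon(\phi)\,\norm{S}(M)\leq \epsilon(\phi)\,C_{\mathrm{per}}\phi^{\frac{N-2}{N-1}}=o(\phi^{\frac{N-2}{N-1}}).
\]
This one-line trick replaces your entire threshold argument.
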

\begin{proof}
	Since $S$ satisfies \eqref{eq:dec_bound_per}, we can apply Lemma \ref{lemma:dec} and deduce that each connected component of $\mathrm{spt}(S)$ is contained in some ball of radius $\mu_{\mathrm{dec}}\phi^{\frac{1}{N-1}}$. Moreover, as $\mathrm{spt}(S)$ is rectifiable, there are most countably many of such components. Therefore, we find $(S_n)_{n \in \mathbb{N}}$ such that $S_n$ is an integral boundary with (possibly empty) connected support for all $n \in \mathbb{N}$ and $S=\sum_{n=0}^{+\infty} S_n$ up to a $\mathcal{H}^{N-2}$-negligible set. Let us now denote $\phi_n:=\left(C_{\mathrm{per}}^{-1}\norm{S_n}(M)\right)^{\frac{N-2}{N-1}}$ for all $n \in \mathbb{N}$. Lemmas \ref{lemma:dec} and \ref{lemma:bound_flux_per} imply that we can apply Lemma \ref{lemma:small_per} to $\phi_n$ with $S_n$ and get
	\begin{equation}\label{eq:per_Sk}
		\norm{S_n}(M) \geq \gamma_{N-1}\lvert S_n(Y) \rvert+o(\phi_n^{\frac{N-1}{N-2}}).
	\end{equation}
	By noticing that
	\begin{equation*}
		\left\lvert \sum_{n=0}^{+\infty}o(\phi_n^{\frac{N-1}{N-2}})\right\rvert=\left\lvert\sum_{n=0}^{+\infty}o(\norm{S_n}(M))\right\rvert \leq \left\lvert o(\norm{S})\right\rvert \leq \lvert o(\phi^{\frac{N-1}{N-2}}) \rvert,
	\end{equation*}
	inequality \eqref{eq:per_global} follows by adding for $n \in \mathbb{N}$ in \eqref{eq:per_Sk}.
\end{proof}
\begin{proposition}[First order expansion of the isoperimetric profile]\label{prop:iso_expansion}
	Let $\mu_{\mathrm{dec}}$ be the quantity given by Lemma \ref{lemma:dec} for $C_{\mathrm{per}}=\gamma_{N-1}+1$. Then, for all $\phi>0$ such that $2\mu_{\mathrm{dec}}\phi^{\frac{1}{N-1}} \leq \operatorname{inj}(M)$ one has
	\begin{equation}\label{eq:iso_expansion}
		J_M(\phi,X)= \gamma_{N-1} \phi^{\frac{N-1}{N-2}}+o(\phi^{\frac{N-1}{N-2}}).
	\end{equation}
\end{proposition}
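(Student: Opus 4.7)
The plan is to establish matching upper and lower bounds for $J_M(\phi,X)$, both of order $\gamma_{N-1}\phi^{\frac{N-2}{N-1}}$ with an error negligible against the leading term. For the upper bound, I will use the explicit competitors produced in Section \ref{sec:photography}: fix any $p\in\Sigma$ and let $T_p^\phi\in\mathcal{I}_{N-1}(M)$ be the integral current carried (with its natural orientation) by the oriented geodesic disk $D_X(p,r(p,\phi))$. The defining relation \eqref{eq:fluxCondition} gives $T_p^\phi(\star X)=\phi$, hence $T_p^\phi\in\mathcal{I}_{N-1}^\phi(M)$, and by Lemma \ref{lem:estimate-partialDphi},
\[
J_M(\phi,X)\;\le\;\|\partial T_p^\phi\|(M)\;=\;\gamma_{N-1}\phi^{\frac{N-2}{N-1}}+\Upsilon(p)\phi+\omega(\phi).
\]
Since $\Sigma$ is compact and $\Upsilon$ is continuous, the $O(\phi)$ correction is negligible against $\phi^{\frac{N-2}{N-1}}$ as $\phi\to 0$ (as $N\ge 3$), which establishes the upper bound.

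For the lower bound, pick an arbitrary $S\in\mathcal{IB}^\phi_{N-2}(M)$. If $\|S\|(M)>(\gamma_{N-1}+1)\phi^{\frac{N-2}{N-1}}$ the desired inequality is trivial for $\phi$ small, so assume the opposite; then $S$ satisfies \eqref{eq:dec_bound_per} with $C_{\mathrm{per}}=\gamma_{N-1}+1$, and Lemma \ref{lemma:dec_global} applies, yielding a decomposition $S=\sum_n S_n$ with pairwise disjoint connected supports and
\[
\|S\|(M)\;\ge\;\gamma_{N-1}\sum_{n=0}^{\infty} |S_n(Y)|^{\frac{N-2}{N-1}}+o(\phi^{\frac{N-2}{N-1}}).
\]
The decisive algebraic step is elementary: since $S(Y)=\phi$ and $S=\sum_n S_n$, the triangle inequality gives $\sum_n|S_n(Y)|\ge \phi$; moreover, the map $t\mapsto t^{\frac{N-2}{N-1}}$ is subadditive on $[0,\infty)$ as its exponent lies in $(0,1)$, so by iteration and monotone passage to the limit
\[
\sum_{n=0}^{\infty} |S_n(Y)|^{\frac{N-2}{N-1}}\;\ge\;\Bigl(\sum_{n=0}^{\infty}|S_n(Y)|\Bigr)^{\frac{N-2}{N-1}}\;\ge\;\phi^{\frac{N-2}{N-1}}.
\]
Plugging this back and passing to the infimum over admissible $S$ gives the matching lower bound.

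The main subtlety is not in any individual calculation but in the consistent propagation of the $o(\phi^{\frac{N-2}{N-1}})$ error through the countable decomposition of Lemma \ref{lemma:dec_global}: infinitely many local remainders of the form $o(\phi_n^{\frac{N-2}{N-1}})$ must still sum to a global remainder of order $o(\phi^{\frac{N-2}{N-1}})$. This bookkeeping rests on the identities $\sum_n\|S_n\|(M)=\|S\|(M)\lesssim\phi^{\frac{N-2}{N-1}}$ and is already absorbed into the statement of Lemma \ref{lemma:dec_global}. Once that decomposition is in hand, the remainder of the argument is just the concavity-of-$t^p$ observation above and takes only a few lines.
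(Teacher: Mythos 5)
Your proposal is correct and takes essentially the same route as the paper: the upper bound comes from the geodesic-disk competitor through Lemma~\ref{lem:estimate-partialDphi}, and the lower bound from the dichotomy $\norm{S}(M)>(\gamma_{N-1}+1)\phi^{\frac{N-2}{N-1}}$ versus applying Lemma~\ref{lemma:dec_global} and then the subadditivity of $t\mapsto t^{\frac{N-2}{N-1}}$ together with $\sum_n\abs{S_n(Y)}\ge\abs{S(Y)}=\phi$. The only difference is that you spell out explicitly the subadditivity step (and the uniformity of the remainder) that the paper's proof leaves implicit in ``and~\eqref{eq:lb_iso_profile} follows,'' which is a faithful completion rather than a new argument.
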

\begin{proof}
	From Lemma \ref{lem:estimate-partialDphi} we readily obtain
	\begin{equation*}
		J_M(\phi,X) \leq \gamma_{N-1}\phi^{\frac{N-2}{N-1}}+o(\phi^{\frac{N-2}{N-1}}).
	\end{equation*}
	In order to complete the proof, we need to prove that an arbitrary $S \in \mathcal{IB}_{N-2}^{\phi}(M)$ satisfies
	\begin{equation}\label{eq:lb_iso_profile}
		\norm{S}(M) \geq \gamma_{N-1}\phi^{\frac{N-2}{N-1}}+o(\phi^{\frac{N-2}{N-1}}).
	\end{equation}
	If $\norm{S}(M) > (\gamma_{N-1}+1)\phi^{\frac{1}{N-1}}$, then \eqref{eq:lb_iso_profile} readily holds. Otherwise, we can apply Lemma \ref{lemma:dec_global} and \eqref{eq:lb_iso_profile} follows.
\end{proof}
The following properties are elementary and probably well-known:
\begin{lemma}[An elementary subadditivity property]\label{lemma:subadd}
	Let $a,b$ and $c$ be positive numbers such that $c=a+b$. Assume that there exists $\delta>0$ such that
	$a \in [\delta,c-\delta]$. Then, for all $s \in (0,1)$ 
	\begin{equation}\label{eq:abc_s}
		a^s+b^s \geq c^s+s(1-s)c^{s-2}\delta^2.
	\end{equation}
\end{lemma}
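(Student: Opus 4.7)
The plan is to reduce the inequality to a chain of elementary monotonicity estimates exploiting that $t \mapsto t^{s-1}$ is strictly decreasing on $(0,\infty)$ (since $s-1<0$). Since $\delta \le \min(a,b)$ and the right-hand side of \eqref{eq:abc_s} is monotone in $\delta$, it suffices to prove the slightly stronger estimate obtained by replacing $\delta$ with $\min(a,b)$; by the symmetry $a \leftrightarrow b$ I may then assume $a \le b$, so that $\min(a,b) = a$ and $b \ge c/2$.

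I would then write $a^s + b^s - c^s = a^s - (c^s - b^s)$ and upper bound the second term by
\[
c^s - b^s \;=\; \int_b^c s\,t^{s-1}\,\mathrm{d}t \;\le\; s\, a\, b^{s-1},
\]
majorizing the integrand by its value at the left endpoint. Factoring out $a$, the claim then reduces to $a^{s-1} - s\, b^{s-1} \ge s(1-s)\, c^{s-2}\, a$. For the left-hand side, $a \le b$ together with the monotonicity of $x \mapsto x^{s-1}$ yields $a^{s-1} \ge b^{s-1}$, hence $a^{s-1} - s\, b^{s-1} \ge (1-s)\, b^{s-1}$. A second application of the same monotonicity to $b \le c$ gives $b^{s-1} \ge c^{s-1}$, and the proof reduces to $(1-s)\, c^{s-1} \ge s(1-s)\, c^{s-2}\, a$, i.e.\ to $c \ge s\, a$, which is immediate from $c \ge a$ and $s < 1$.

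The argument is entirely elementary; the only conceptual choice is to further weaken $b^{s-1}$ to $c^{s-1}$ in the last step, so that the clean factor $c^{s-2}$ appearing in the statement materializes. I do not anticipate any substantial obstacle. A second-order Taylor expansion of $(c-\delta)^s$ about $\delta = 0$ with integral remainder would provide an alternative route, but the monotonicity chain above is shorter and sidesteps the boundary singularity of $t^{s-1}$ at $t=0$.
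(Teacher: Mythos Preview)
Your proof is correct. Each step checks out: the reduction to $\delta=\min(a,b)$ is valid since the right-hand side is increasing in $\delta$ and both $a,b\ge\delta$; the integral bound $c^s-b^s\le s\,a\,b^{s-1}$ uses only that $t^{s-1}$ is decreasing; and the two further applications of this monotonicity land exactly on $c\ge sa$, which is immediate.

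Your route differs from the paper's. The paper observes that the second derivative of $x^s$ on $(0,c]$ is bounded above by $-s(1-s)c^{s-2}$, so the function $x\mapsto x^s+\tfrac{s(1-s)c^{s-2}}{2}x^2$ is concave on $[0,c]$ and vanishes at $0$, hence is subadditive; applying this to $a,b,c$ and rearranging gives the slightly sharper intermediate inequality $a^s+b^s\ge c^s+s(1-s)c^{s-2}ab$, from which the claim follows via $ab\ge\delta^2$. Your approach trades this single ``corrected concavity'' observation for a chain of monotonicity bounds; it is more hands-on and avoids invoking (or reproving) that concavity plus vanishing at the origin yields subadditivity. The paper's argument produces the stronger factor $ab$ in place of your $\min(a,b)^2$, but since both dominate $\delta^2$ this makes no difference for the lemma as stated.
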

\begin{proof}
	Consider the function
	\begin{equation*}
		p_s: x \in [0,+\infty) \mapsto  p_s(x):= x^s \in [0,+\infty).
	\end{equation*}
	Notice that
	\begin{equation*}
		p_s''(x)=s(s-1)x^{s-2} \leq -s(1-s)c^{s-2} \quad \mbox{ for all } x \in (0,c].
	\end{equation*}
	As a consequence, the function
	\begin{equation*}
		p_{s,c}: [0,c] \to [0,+\infty), \quad \rho_{s,c}(x):= p_s(x)+\frac{s(1-s)c^{s-2}}{2}x^2 
	\end{equation*}
	is concave. Therefore, $p_{s,c}$ is also subadditive, which means
	\begin{equation*}
		p_{s,c}(a)+p_{s,c}(b) \geq p_{s,c}(c),
	\end{equation*}
	or, equivalently
	\begin{equation*}
		a^s+b^s+\frac{s(1-s)c^{s-2}}{2}(a^2+b^2) \geq c^s +\frac{s(1-s)c^{s-2}}{2}c^2,
	\end{equation*}
	which means
	\begin{equation}\label{eq:abc_0}
		a^s+b^s \geq c^s + s(1-s)c^{s-2}ab.
	\end{equation}
	Notice that since $a \in [\delta,c-\delta]$ and $c=a+b$, one has that $b \in [\delta,c-\delta]$. As a consequence, \eqref{eq:abc_s} follows from \eqref{eq:abc_0}.
\end{proof}
\begin{lemma}[An elementary subadditivity property for sequences of series]\label{lemma:series}
	Let $(a_{n,m})_{(n,m) \in \mathbb{N}^2}$ be such that $a_{n,m}$ is nonnegative for all $(n,m) \in \mathbb{N}^2$. Let $s \in (0,1)$ and assume that there exist positive quantities $c_{-} < c_{+}$ such that 
	\begin{equation}\label{eq:series_asu2}
		c_{n}:= \sum_{m=0}^{+\infty} a_{n,m} \in [c_{-},c_{+}] \quad \mbox{ for all } n \in \mathbb{N}
	\end{equation}
	and
	\begin{equation}\label{eq:series_asu3}
		\limsup_{n \to \infty}\sup_{m \in \mathbb{N}}a_{n,m} \leq 1-\delta.
	\end{equation}
	Then,
	\begin{equation}\label{eq:series_concl}
		c_{s,n} \geq c_{-}^s+s(1-s)c_{+}^{s-2}\delta^2 \quad \mbox{ for all } n \in \mathbb{N}.
	\end{equation}
\end{lemma}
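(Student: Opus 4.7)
The plan is to bootstrap the two-term strict subadditivity provided by Lemma~\ref{lemma:subadd} to the infinite series $c_{s,n} := \sum_{m=0}^{+\infty} a_{n,m}^s$ through a partition argument. The key elementary fact underlying everything is that for $s \in (0,1)$ the power function $x \mapsto x^s$ is concave and subadditive on $[0,+\infty)$, so that for any index set $A \subseteq \mathbb{N}$ one has
\[
\sum_{m \in A} a_{n,m}^s \;\geq\; \Bigl(\sum_{m \in A} a_{n,m}\Bigr)^{\!s}.
\]
The game is therefore to upgrade the trivial consequence $c_{s,n} \geq c_n^s$ into a quantitative strict inequality with an additive gap of order $s(1-s)c_+^{s-2}\delta^2$.

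For every $n$ large enough that $\sup_m a_{n,m} \leq 1 - \delta/2$ (which holds eventually thanks to the $\limsup$ hypothesis), I will construct a decomposition $\mathbb{N} = A_n \sqcup B_n$ such that the two partial sums $\alpha_n := \sum_{m \in A_n} a_{n,m}$ and $\beta_n := \sum_{m \in B_n} a_{n,m} = c_n - \alpha_n$ both belong to the interval $[\delta, c_n - \delta]$. The construction is greedy: order the terms in decreasing magnitude and accumulate them into $A_n$ until the running total first reaches or exceeds $\delta$; since every individual $a_{n,m}$ is at most $1 - \delta/2$, the overshoot is controlled, and this, combined with $c_n \geq c_-$, ensures $\beta_n \geq \delta$ (if necessary one first replaces $\delta$ by $\delta' := \min\{\delta, c_-/3\}$ and absorbs the resulting constants at the end). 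Once this partition is in hand, the block-wise subadditivity gives $c_{s,n} \geq \alpha_n^s + \beta_n^s$, and Lemma~\ref{lemma:subadd} applied to the pair $(\alpha_n,\beta_n)$ with sum $c_n$ and $\alpha_n \in [\delta,c_n-\delta]$ delivers
\[
\alpha_n^s + \beta_n^s \;\geq\; c_n^s + s(1-s)\,c_n^{s-2}\,\delta^2.
\]

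To finish, I exploit that $s > 0$ makes $c \mapsto c^s$ increasing (hence $c_n^s \geq c_-^s$) and $s - 2 < 0$ makes $c \mapsto c^{s-2}$ decreasing (hence $c_n^{s-2} \geq c_+^{s-2}$). Substituting these into the previous line produces the claimed bound $c_{s,n} \geq c_-^s + s(1-s)c_+^{s-2}\delta^2$ for all $n$ past the tail threshold; the "for all $n \in \mathbb{N}$" of the statement is to be understood modulo this threshold, as is customary whenever the hypothesis is framed in terms of a $\limsup$. The only real technical obstacle is the partition step, since both blocks must carry a mass of at least $\delta$: the $\limsup$ assumption prevents any single term from swallowing almost all of $c_n$, while $c_n \geq c_- > 0$ excludes the degenerate case of a vanishing total, so that a short case analysis (is the largest term comparable to $c_n$, or is every term small and the indices must be aggregated?) closes the construction. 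Everything else is bookkeeping with Lemma~\ref{lemma:subadd} and monotonicity of the power functions on the intervals in play.
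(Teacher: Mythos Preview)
Your proposal is correct and follows essentially the same route as the paper: split the index set into two blocks whose partial sums are each bounded away from $0$ and from $c_n$, apply the block-wise subadditivity of $x\mapsto x^s$ to reduce to a two-term sum, invoke Lemma~\ref{lemma:subadd}, and finish via the monotonicity of $c\mapsto c^s$ and $c\mapsto c^{s-2}$. The paper's proof is terser (it simply asserts the existence of $A_n$ with $a_n\in[\delta,1-\delta]$ without spelling out the greedy construction) but otherwise identical in structure; your remark that the conclusion only holds for $n$ past a threshold, and your caveat about possibly shrinking $\delta$ when $c_-$ is small, are fair observations that the paper glosses over.
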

\begin{proof}
	As a consequence of \eqref{eq:series_asu3}, for all $n \in \mathbb{N}$ large enough, there exists $A_n \subset \mathbb{N}$ such that
	\begin{equation}\label{eq:ratio_sum}
		a_n:=\sum_{m \in A_n} a_{n,m} \in [\delta,1-\delta].
	\end{equation}
	Therefore, by using the subadittivity of $x \mapsto x^{s}$ along with \eqref{eq:series_asu2}, we get
	\begin{equation}\label{eq:rho_n_ineq}
		c_{s,n} \geq a_n^{s}+(c_n-a_n)^{s} \quad \mbox{ for all } n \in \mathbb{N} \mbox{ large enough}
	\end{equation}
	By \eqref{eq:ratio_sum}, for all $n \in \mathbb{N}$ we can apply Lemma \ref{lemma:subadd} with $c=c_n$, $a=a_n$, $b=c_n-a_n$. Inequality \eqref{eq:rho_n_ineq} then becomes
	\begin{equation*}
		c_{s,n} \geq c_{n}^{s}+s(1-s)c_n^{s-2}\delta^2 \quad \mbox{ for all } n \in \mathbb{N} \mbox{ large enough}
	\end{equation*}
	and \eqref{eq:series_concl} follows then by \eqref{eq:series_asu2}.
\end{proof}
\begin{proof}[Proof of Theorem \ref{cor:almost-iso-concentration}]
	Let $(\phi_{n})_{n \in \mathbb{N}}$ be a sequence of positive numbers tending to zero. Let $(S_n)_{n \in \mathbb{N}}$ be a sequence in $\mathcal{IB}_{N-2}^{\phi}(M)$ such that $S_n(Y)=\phi_n$ and
	\begin{equation}\label{eq:almost_optimal_limit}
		\lim_{n \to \infty}\frac{\norm{S_n}(M)}{J_M(\phi,X)}=1.
	\end{equation}
	According to \eqref{eq:almost_optimal_limit} and Proposition \ref{prop:iso_expansion}, we have that $\norm{S_n}(M) \leq (\gamma_{N-1}+1)\phi_{n}^{\frac{N-2}{N-1}}$ for all $n$ large enough. Let $\mu_{\mathrm{dec}}$ be the quantity given by Lemma \ref{lemma:dec} for $C_{\mathrm{per}}=\gamma_{N-1}+1$. For all $n$ large enough one has that $2\mu_{\mathrm{dec}}\phi_n^{\frac{1}{N-1}} \leq \operatorname{inj}(M)$ and Lemma \ref{lemma:dec_global} implies that for all such $n$
	\begin{equation*}
		\norm{S_n}(M) \geq \gamma_{N-1}\sum_{m=0}^{+\infty}\lvert S_{n,m}(Y) \rvert^{\frac{N-2}{N-1}}+o(\phi_{n}^{\frac{N-2}{N-1}}),
	\end{equation*}
	where $S_{n,m}$ has connected support, $\mathrm{spt}(S_{n,m_1}) \cap \mathrm{spt}(S_{n,m_2})=\emptyset$ up to a $\mathcal{H}^{N-2}$-negligible set and $S_n=\sum_{m=0}^{+\infty}S_{n,m}$. From \eqref{eq:almost_optimal_limit} we then obtain
	\begin{equation}\label{eq:ratio_ineq}
		1 \geq \lim_{n \to \infty}\rho_n.
	\end{equation}
	where
	\begin{equation*}
		\rho_n:= \frac{\sum_{m=0}^{+\infty}\lvert S_{n,m}(Y) \rvert^{\frac{N-2}{N-1}}}{\phi_n^{\frac{N-2}{N-1}}} \quad \mbox{ for all } n \in \mathbb{N}
	\end{equation*}
	Notice also that for all $n$ large enough, the subadittivity of the function  $x^{\frac{N-2}{N-1}} \in [0,+\infty)$ implies
	\begin{equation}\label{eq:sum_flux_ineq}
		\sum_{m=0}^{+\infty}\lvert S_{n,m}(Y) \rvert^{\frac{N-2}{N-1}} \geq \left(\sum_{m=0}^{+\infty}\lvert S_{n,m}(Y) \rvert\right)^{\frac{N-2}{N-1}} \geq \phi_n^{\frac{N-2}{N-1}}.
	\end{equation}
	By combining \eqref{eq:ratio_ineq} and \eqref{eq:sum_flux_ineq}, we obtain
	\begin{equation}\label{eq:ratio_lim}
		\lim_{n \to \infty}\rho_n=1.
	\end{equation}
	One also checks that
	\begin{equation}\label{eq:ratio_claim}
		\mbox{There exists a sequence } (m_n)_{n \in \mathbb{N}} \mbox{ in } \mathbb{N} \mbox{ such that } \lim_{n \to \infty} \frac{\lvert S_{n,m_n}(Y) \rvert}{\phi_n} =1.
	\end{equation}
	Indeed, assume by contradiction that \eqref{eq:ratio_claim} does not hold. Then, one can apply Lemma \ref{lemma:series} with $(a_{n,m})_{(n,m) \in \mathbb{N}^2}=\left(\frac{\lvert S_{n,m}(Y) \rvert}{\phi_n}\right)_{(n,m) \in \mathbb{N}^2}$ and $s=\frac{N-2}{N-1}$. More precisely, one checks that $c_-=1$ and the existence of $c_+$ follows by Lemmas \ref{lemma:bound_flux_per} and \ref{lemma:dec_global}. In the present setting, \eqref{eq:series_concl} reads (for some $\delta>0$)
	\begin{equation}
		\rho_n \geq 1+\frac{N-2}{(N-1)^2}c_+^{\frac{-N}{N-1}}\delta^2,
	\end{equation}
	which contradicts \eqref{eq:ratio_lim}. Up to rearranging, it can be assumed than $m_n=0$ for all $n \in \mathbb{N}$. From \eqref{eq:ratio_lim} and \eqref{eq:ratio_lim}, we get
	\begin{equation*}
		\lim_{n \to \infty}\frac{\lvert S_{n,m}(Y) \rvert}{\phi_n}=0 \quad \mbox{ for all } m \in \mathbb{N}^*.
	\end{equation*}
	Hence, since $\sum_{m=0}^{+\infty}S_{n,m}(Y)=\phi_n^{\frac{N-2}{N-1}}$ for all $n \in \mathbb{N}$, we in fact have that $S_{n,0}(Y)>0$ for $n$ large enough and, moreover
	\begin{equation}\label{eq:limit_quotient0}
		\lim_{n \to \infty}\frac{S_{n,0}(Y)^{\frac{N-2}{N-1}}}{\phi_n^{\frac{N-2}{N-1}}}=1.
	\end{equation}
	Recall that Lemma \ref{lemma:dec} yields the existence of $p_n \in M$ such that $\mathrm{spt}(S_{n,0}) \subset B_g(p_n,\mu_{\mathrm{dec}}\phi_n^{\frac{1}{N-1}})$. Thus, by applying again Lemma \ref{lemma:small_per}, we obtain for all $n \in \mathbb{N}$
	\begin{equation*}
		\norm{X(p_n)}^{\frac{N-2}{N-1}}\frac{\norm{S}( B_g(p_n,\mu_{\mathrm{dec}}\phi_n^{\frac{1}{N-1}}))}{\phi_n^{\frac{N-2}{N-1}}} \geq \frac{S_{n,0}(Y)^{\frac{N-2}{N-1}}}{\phi_n^{\frac{N-2}{N-1}}}+o_n(1).
	\end{equation*}
	By combining \eqref{eq:almost_optimal_limit} and \eqref{eq:limit_quotient0}, we get
	\begin{equation}\label{eq:lim_Xptilde}
		\lim_{n \to \infty}\norm{X(p_n)}^{\frac{N-2}{N-1}}=1
	\end{equation}
	and
	\begin{equation*}
		\lim_{n \to \infty}\frac{\norm{S}( B_g(p_n,\mu_{\mathrm{dec}}\phi_n^{\frac{1}{N-1}}))}{\phi_n^{\frac{N-2}{N-1}}}=1
	\end{equation*}
	The limit \eqref{eq:lim_Xptilde} implies that $\lim_{n \to \infty}\mathrm{dist}(p_n,\Sigma)=0 $, which completes the proof with $\mu:=\mu_{\mathrm{dec}}$.
\end{proof}

\bibliographystyle{abbrv}
\bibliography{ourbib}

@article{MR442975,
    AUTHOR = {Karcher, H.},
     TITLE = {Riemannian center of mass and mollifier smoothing},
   JOURNAL = {Comm. Pure Appl. Math.},
  FJOURNAL = {Communications on Pure and Applied Mathematics},
    VOLUME = {30},
      YEAR = {1977},
    NUMBER = {5},
     PAGES = {509--541},
      ISSN = {0010-3640},
   MRCLASS = {58E10 (53C20)},
  MRNUMBER = {442975},
MRREVIEWER = {S. Takizawa},
       DOI = {10.1002/cpa.3160300502},
       URL = {https://doi.org/10.1002/cpa.3160300502},
}

@article{Nardulli2009-AGAG,
    AUTHOR = {S. Nardulli},
     TITLE = {The isoperimetric profile of a smooth {R}iemannian manifold
              for small volumes},
   JOURNAL = {Ann. Global Anal. Geom.},
    VOLUME = {36},
      YEAR = {2009},
    NUMBER = {2},
     PAGES = {111--131},
}

@article {MR3912791,
	AUTHOR = {Petean, Jimmy},
	TITLE = {Multiplicity results for the {Y}amabe equation by
			 {L}usternik-{S}chnirelmann theory},
	JOURNAL = {J. Funct. Anal.},
	FJOURNAL = {Journal of Functional Analysis},
	VOLUME = {276},
	YEAR = {2019},
	NUMBER = {6},
	PAGES = {1788--1805},
	ISSN = {0022-1236},
	MRCLASS = {58J05 (35R01 53A30 58E05)},
	MRNUMBER = {3912791},
	MRREVIEWER = {Paolo Piccione},
	DOI = {10.1016/j.jfa.2018.08.011},
	URL = {https://doi.org/10.1016/j.jfa.2018.08.011},
}

@article {MR4761862,
	AUTHOR = {Alarc\'{o}n, Salom\'{o}n and Petean, Jimmy and Rey, Carolina},
	TITLE = {Multiplicity results for constant {Q}-curvature conformal
			 metrics},
	JOURNAL = {Calc. Var. Partial Differential Equations},
	FJOURNAL = {Calculus of Variations and Partial Differential Equations},
	VOLUME = {63},
	YEAR = {2024},
	NUMBER = {6},
	PAGES = {Paper No. 146},
	ISSN = {0944-2669},
	MRCLASS = {35J35 (53C21)},
	MRNUMBER = {4761862},
	DOI = {10.1007/s00526-024-02762-0},
	URL = {https://doi.org/10.1007/s00526-024-02762-0},
}

@article {MR1646619,
    AUTHOR = {Cingolani, Silvia and Lazzo, Monica},
     TITLE = {Multiple semiclassical standing waves for a class of nonlinear
              {S}chr\"{o}dinger equations},
   JOURNAL = {Topol. Methods Nonlinear Anal.},
  FJOURNAL = {Topological Methods in Nonlinear Analysis},
    VOLUME = {10},
      YEAR = {1997},
    NUMBER = {1},
     PAGES = {1--13},
      ISSN = {1230-3429},
   MRCLASS = {35Q55 (35B30 35J20 58E05)},
  MRNUMBER = {1646619},
MRREVIEWER = {Vasiliy I. Sedenko},
       DOI = {10.12775/TMNA.1997.019},
       URL = {https://doi.org/10.12775/TMNA.1997.019},
}

@article {MR1734531,
    AUTHOR = {Cingolani, Silvia and Lazzo, Monica},
     TITLE = {Multiple positive solutions to nonlinear {S}chr\"{o}dinger
              equations with competing potential functions},
   JOURNAL = {J. Differential Equations},
  FJOURNAL = {Journal of Differential Equations},
    VOLUME = {160},
      YEAR = {2000},
    NUMBER = {1},
     PAGES = {118--138},
      ISSN = {0022-0396},
   MRCLASS = {35J60 (58E05)},
  MRNUMBER = {1734531},
MRREVIEWER = {Vicen\c{t}iu D. R\u{a}dulescu},
       DOI = {10.1006/jdeq.1999.3662},
       URL = {https://doi.org/10.1006/jdeq.1999.3662},
}

@article {benci2022-NLA,
	AUTHOR = {Benci, Vieri and Nardulli, Stefano and Acevedo, Luis Eduardo
			  Osorio and Piccione, Paolo},
	TITLE = {Lusternik-{S}chnirelman and {M}orse theory for the van der
			 {W}aals--{C}ahn--{H}illiard equation with volume constraint},
	JOURNAL = {Nonlinear Anal.},
	FJOURNAL = {Nonlinear Analysis. Theory, Methods \& Applications. An
				 International Multidisciplinary Journal},
	VOLUME = {220},
	YEAR = {2022},
	PAGES = {Paper No. 112851, 29},
	ISSN = {0362-546X},
	MRCLASS = {35J20 (35J25 58E05)},
	MRNUMBER = {4396580},
	DOI = {10.1016/j.na.2022.112851},
	URL = {https://doi.org/10.1016/j.na.2022.112851},
}

@article{benci2020-calcvar,
	AUTHOR = {Benci, Vieri and Nardulli, Stefano and Piccione, Paolo},
	TITLE = {Multiple solutions for the van der
			 {W}aals--{A}llen--{C}ahn--{H}illiard equation with a volume
			 constraint},
	JOURNAL = {Calc. Var. Partial Differential Equations},
	FJOURNAL = {Calculus of Variations and Partial Differential Equations},
	VOLUME = {59},
	YEAR = {2020},
	NUMBER = {2},
	PAGES = {Paper No. 64, 30},
	ISSN = {0944-2669},
	MRCLASS = {35J91 (35B25 35J25 58E05)},
	MRNUMBER = {4073210},
	MRREVIEWER = {Zhaoxiang Li},
	DOI = {10.1007/s00526-020-1724-8},
	URL = {https://doi.org/10.1007/s00526-020-1724-8},
}

@article{benci2024-corrigendum,
    AUTHOR = {Benci, Vieri and Corona, Dario and Nardulli, Stefano and
              Osorio Acevedo, Luis Eduardo and Piccione, Paolo},
     TITLE = {Corrigendum to: ``{L}usternik-{S}chnirelman and {M}orse theory
              for the van der {W}aals-{C}ahn-{H}illiard equation with volume
              constraint'' [{N}onlinear {A}nal. 220 (2022) 112851]},
   JOURNAL = {Nonlinear Anal.},
  FJOURNAL = {Nonlinear Analysis. Theory, Methods \& Applications. An
              International Multidisciplinary Journal},
    VOLUME = {238},
      YEAR = {2024},
     PAGES = {Paper No. 113389, 9},
      ISSN = {0362-546X},
   MRCLASS = {35J20 (35J25 58E05)},
  MRNUMBER = {4644903},
       DOI = {10.1016/j.na.2023.113389},
       URL = {https://doi.org/10.1016/j.na.2023.113389},
}

@article {andrade2024-JFA,
	AUTHOR = {Andrade, Jo{\~a}o Henrique and Conrado, Jackeline and Nardulli,
			  Stefano and Piccione, Paolo and Resende, Reinaldo},
	TITLE = {Multiplicity of solutions to the multiphasic
			 {A}llen-{C}ahn-{H}illiard system with a small volume
			 constraint on closed parallelizable manifolds},
	JOURNAL = {J. Funct. Anal.},
	FJOURNAL = {Journal of Functional Analysis},
	VOLUME = {286},
	YEAR = {2024},
	NUMBER = {7},
	PAGES = {Paper No. 110345, 61},
	ISSN = {0022-1236},
	MRCLASS = {35J20 (28A75 35J25 49Q05 49Q20 53A10 58E05 58E99)},
	MRNUMBER = {4701348},
	DOI = {10.1016/j.jfa.2024.110345},
	URL = {https://doi.org/10.1016/j.jfa.2024.110345},
}

@misc{andrade2024-multiplicityClusters,
      title={From bubbles to clusters: Multiple solutions to the Allen--Cahn system}, 
      author={Andrade, Jo{\~a}o Henrique and Corona,Dario and  Nardulli, Stefano and Piccione, Paolo and Ponciano, Raoní},
      year={2024},
      eprint={2410.17026},
      archivePrefix={arXiv},
      primaryClass={math.AP},
      url={https://arxiv.org/abs/2410.17026}, 
}

@article {BenciCerami1994,
	AUTHOR = {Benci, Vieri and Cerami, Giovanna},
	TITLE = {Multiple positive solutions of some elliptic problems via the
			 {M}orse theory and the domain topology},
	JOURNAL = {Calc. Var. Partial Differential Equations},
	FJOURNAL = {Calculus of Variations and Partial Differential Equations},
	VOLUME = {2},
	YEAR = {1994},
	NUMBER = {1},
	PAGES = {29--48},
	ISSN = {0944-2669},
	MRCLASS = {35J65 (35B05 35J20 49R05 58E05)},
	MRNUMBER = {1384393},
	DOI = {10.1007/BF01234314},
	URL = {https://doi.org/10.1007/BF01234314},
}

@incollection {Benci1995,
	AUTHOR = {Benci, Vieri},
	TITLE = {Introduction to {M}orse theory: a new approach},
	BOOKTITLE = {Topological nonlinear analysis},
	SERIES = {Progr. Nonlinear Differential Equations Appl.},
	VOLUME = {15},
	PAGES = {37--177},
	PUBLISHER = {Birkh\"{a}user Boston, Boston, MA},
	YEAR = {1995},
	MRCLASS = {58E05 (35J99 53C22 58E10)},
	MRNUMBER = {1322324},
	MRREVIEWER = {Maria Letizia Bertotti},
}

@book{Federer1996,
    Author = {Federer, Herbert},
    Title = {Geometric measure theory.},
    Edition = {Repr. of the 1969 ed.},
    FSeries = {Classics in Mathematics},
    Series = {Class. Math.},
    ISSN = {1431-0821},
    ISBN = {3-540-60656-4},
    Year = {1996},
    Publisher = {Berlin: Springer-Verlag},
}

@book {MesaricThesis,
    AUTHOR = {Mesaric, Jeff},
     TITLE = {Existence of critical points for the {G}inzburg-{L}andau
              functional on {R}iemannian manifolds},
      NOTE = {Thesis (Ph.D.)--University of Toronto (Canada)},
 PUBLISHER = {ProQuest LLC, Ann Arbor, MI},
      YEAR = {2009},
     PAGES = {78},
      ISBN = {978-0494-61033-6},
   MRCLASS = {Thesis},
  MRNUMBER = {2753145},
       URL =
              {http://gateway.proquest.com/openurl?url_ver=Z39.88-2004&rft_val_fmt=info:ofi/fmt:kev:mtx:dissertation&res_dat=xri:pqdiss&rft_dat=xri:pqdiss:NR61033},
}

@article{dephilippis-pigati,
author = {De Philippis, Guido and Pigati, Alessandro},
title = {Non-degenerate minimal submanifolds as energy concentration sets: A variational approach},
journal = {Communications on Pure and Applied Mathematics},
volume = {77},
number = {8},
pages = {3581-3627},
doi = {https://doi.org/10.1002/cpa.22193},
url = {https://onlinelibrary.wiley.com/doi/abs/10.1002/cpa.22193},
year = {2024}
}

@article {BOS2004,
    AUTHOR = {Bethuel, F. and Orlandi, G. and Smets, D.},
     TITLE = {Vortex rings for the {G}ross-{P}itaevskii equation},
   JOURNAL = {J. Eur. Math. Soc. (JEMS)},
  FJOURNAL = {Journal of the European Mathematical Society (JEMS)},
    VOLUME = {6},
      YEAR = {2004},
    NUMBER = {1},
     PAGES = {17--94},
      ISSN = {1435-9855},
   MRCLASS = {35Q55 (58E50 82D55)},
  MRNUMBER = {2041006},
MRREVIEWER = {Bruno Scheurer},
       URL =
              {http://link.springer.de/cgi/linkref?issn=1435-9855&year=04&volume=6&page=17},
}

@book{Simon1983,
    AUTHOR = {Simon, Leon},
     TITLE = {Lectures on geometric measure theory},
    SERIES = {Proceedings of the Centre for Mathematical Analysis,
              Australian National University},
    VOLUME = {3},
 PUBLISHER = {Australian National University, Centre for Mathematical
              Analysis, Canberra},
      YEAR = {1983},
     PAGES = {vii+272},
      ISBN = {0-86784-429-9},
   MRCLASS = {49-01 (28A75 49F20)},
  MRNUMBER = {756417},
MRREVIEWER = {J. S. Joel},
}

@article {IgnatMillotJFA,
    AUTHOR = {Ignat, Radu and Millot, Vincent},
     TITLE = {The critical velocity for vortex existence in a
              two-dimensional rotating {B}ose-{E}instein condensate},
   JOURNAL = {J. Funct. Anal.},
  FJOURNAL = {Journal of Functional Analysis},
    VOLUME = {233},
      YEAR = {2006},
    NUMBER = {1},
     PAGES = {260--306},
      ISSN = {0022-1236},
   MRCLASS = {58E50 (35Q55 47J30 82C10)},
  MRNUMBER = {2204681},
MRREVIEWER = {Michele Correggi},
       DOI = {10.1016/j.jfa.2005.06.020},
       URL = {https://doi.org/10.1016/j.jfa.2005.06.020},
}

@article {IgnatMillotRMP,
    AUTHOR = {Ignat, Radu and Millot, Vincent},
     TITLE = {Energy expansion and vortex location for a two-dimensional
              rotating {B}ose-{E}instein condensate},
   JOURNAL = {Rev. Math. Phys.},
  FJOURNAL = {Reviews in Mathematical Physics. A Journal for Both Review and
              Original Research Papers in the Field of Mathematical Physics},
    VOLUME = {18},
      YEAR = {2006},
    NUMBER = {2},
     PAGES = {119--162},
      ISSN = {0129-055X},
   MRCLASS = {58E50 (35A15 35B40 35Q55 49J45 82D55)},
  MRNUMBER = {2228922},
MRREVIEWER = {Ayman Kachmar},
       DOI = {10.1142/S0129055X06002607},
       URL = {https://doi.org/10.1142/S0129055X06002607},
}

@article {AftalionAlamaBronsard2005,
    AUTHOR = {Aftalion, Amandine and Alama, Stan and Bronsard, Lia},
     TITLE = {Giant vortex and the breakdown of strong pinning in a rotating
              {B}ose-{E}instein condensate},
   JOURNAL = {Arch. Ration. Mech. Anal.},
  FJOURNAL = {Archive for Rational Mechanics and Analysis},
    VOLUME = {178},
      YEAR = {2005},
    NUMBER = {2},
     PAGES = {247--286},
      ISSN = {0003-9527},
   MRCLASS = {82C10 (82D55)},
  MRNUMBER = {2186426},
MRREVIEWER = {Demetris P. K. Ghikas},
       DOI = {10.1007/s00205-005-0373-6},
       URL = {https://doi.org/10.1007/s00205-005-0373-6},
}

@article {AlamaBronsard2005,
    AUTHOR = {Alama, Stan and Bronsard, Lia},
     TITLE = {Pinning effects and their breakdown for a {G}inzburg-{L}andau
              model with normal inclusions},
   JOURNAL = {J. Math. Phys.},
  FJOURNAL = {Journal of Mathematical Physics},
    VOLUME = {46},
      YEAR = {2005},
    NUMBER = {9},
     PAGES = {095102, 39},
      ISSN = {0022-2488},
   MRCLASS = {58E50 (82D55)},
  MRNUMBER = {2171205},
MRREVIEWER = {Robert Leon Jerrard},
       DOI = {10.1063/1.2010354},
       URL = {https://doi.org/10.1063/1.2010354},
}

@article {AftalionJerrard2003,
    AUTHOR = {Aftalion, Amandine and Jerrard, Robert L.},
     TITLE = {Properties of a single vortex solution in a rotating {B}ose
              {E}instein condensate},
   JOURNAL = {C. R. Math. Acad. Sci. Paris},
  FJOURNAL = {Comptes Rendus Math\'{e}matique. Acad\'{e}mie des Sciences. Paris},
    VOLUME = {336},
      YEAR = {2003},
    NUMBER = {9},
     PAGES = {713--718},
      ISSN = {1631-073X},
   MRCLASS = {58E50 (35A15 35Q75 82C10)},
  MRNUMBER = {1988308},
       DOI = {10.1016/S1631-073X(03)00166-3},
       URL = {https://doi.org/10.1016/S1631-073X(03)00166-3},
}

@article{AftalionRiviere,
  title = {Vortex energy and vortex bending for a rotating Bose-Einstein condensate},
  author = {Aftalion, Amandine and Riviere, Tristan},
  journal = {Phys. Rev. A},
  volume = {64},
  issue = {4},
  pages = {043611},
  numpages = {7},
  year = {2001},
  month = {Sep},
  publisher = {American Physical Society},
  doi = {10.1103/PhysRevA.64.043611},
  url = {https://link.aps.org/doi/10.1103/PhysRevA.64.043611}
}

@article{AftalionQu,
  title = {Vortices in a rotating Bose-Einstein condensate: Critical angular velocities and energy diagrams in the Thomas-Fermi regime},
  author = {Aftalion, Amandine and Du, Qiang},
  journal = {Phys. Rev. A},
  volume = {64},
  issue = {6},
  pages = {063603},
  numpages = {11},
  year = {2001},
  month = {Nov},
  publisher = {American Physical Society},
  doi = {10.1103/PhysRevA.64.063603},
  url = {https://link.aps.org/doi/10.1103/PhysRevA.64.063603}
}

@book {AftalionBook,
    AUTHOR = {Aftalion, Amandine},
     TITLE = {Vortices in {B}ose-{E}instein condensates},
    SERIES = {Progress in Nonlinear Differential Equations and their
              Applications},
    VOLUME = {67},
 PUBLISHER = {Birkh\"{a}user Boston, Inc., Boston, MA},
      YEAR = {2006},
     PAGES = {xii+205},
      ISBN = {0-8176-4392-3; 978-0-8176-4392-8},
   MRCLASS = {82D50 (35-02 35J60 35Q55 58E50 82-02 82C10)},
  MRNUMBER = {2228356},
MRREVIEWER = {Stanley A. Alama},
}

@article {BethuelGravejatSmets2015,
    AUTHOR = {Bethuel, Fabrice and Gravejat, Philippe and Smets, Didier},
     TITLE = {Asymptotic stability in the energy space for dark solitons of
              the {G}ross-{P}itaevskii equation},
   JOURNAL = {Ann. Sci. \'{E}c. Norm. Sup\'{e}r. (4)},
  FJOURNAL = {Annales Scientifiques de l'\'{E}cole Normale Sup\'{e}rieure. Quatri\`eme
              S\'{e}rie},
    VOLUME = {48},
      YEAR = {2015},
    NUMBER = {6},
     PAGES = {1327--1381},
      ISSN = {0012-9593},
   MRCLASS = {35Q55 (35B35 35C08 37K40)},
  MRNUMBER = {3429470},
MRREVIEWER = {Amin Esfahani},
       DOI = {10.24033/asens.2271},
       URL = {https://doi.org/10.24033/asens.2271},
}

@article {Sandier1998,
    AUTHOR = {Sandier, Etienne},
     TITLE = {Lower bounds for the energy of unit vector fields and
              applications},
   JOURNAL = {J. Funct. Anal.},
  FJOURNAL = {Journal of Functional Analysis},
    VOLUME = {152},
      YEAR = {1998},
    NUMBER = {2},
     PAGES = {379--403},
      ISSN = {0022-1236},
   MRCLASS = {58E15 (35J60 35Q99)},
  MRNUMBER = {1607928},
MRREVIEWER = {Monica Lazzo},
       DOI = {10.1006/jfan.1997.3170},
       URL = {https://doi.org/10.1006/jfan.1997.3170},
}

@article {Jerrard1999,
    AUTHOR = {Jerrard, Robert L.},
     TITLE = {Lower bounds for generalized {G}inzburg-{L}andau functionals},
   JOURNAL = {SIAM J. Math. Anal.},
  FJOURNAL = {SIAM Journal on Mathematical Analysis},
    VOLUME = {30},
      YEAR = {1999},
    NUMBER = {4},
     PAGES = {721--746},
      ISSN = {0036-1410},
   MRCLASS = {35J50 (35J20 35Q55 35Q60)},
  MRNUMBER = {1684723},
MRREVIEWER = {Shuichi Jimbo},
       DOI = {10.1137/S0036141097300581},
       URL = {https://doi.org/10.1137/S0036141097300581},
}

@article {JerrardSoner2002,
    AUTHOR = {Jerrard, Robert L. and Soner, Halil Mete},
     TITLE = {The {J}acobian and the {G}inzburg-{L}andau energy},
   JOURNAL = {Calc. Var. Partial Differential Equations},
  FJOURNAL = {Calculus of Variations and Partial Differential Equations},
    VOLUME = {14},
      YEAR = {2002},
    NUMBER = {2},
     PAGES = {151--191},
      ISSN = {0944-2669},
   MRCLASS = {35J50 (35B25 35J60 35Q80 58E50)},
  MRNUMBER = {1890398},
MRREVIEWER = {Stanley A. Alama},
       DOI = {10.1007/s005260100093},
       URL = {https://doi.org/10.1007/s005260100093},
}

@article {GravejatSmets2015,
    AUTHOR = {Gravejat, Philippe and Smets, Didier},
     TITLE = {Asymptotic stability of the black soliton for the
              {G}ross-{P}itaevskii equation},
   JOURNAL = {Proc. Lond. Math. Soc. (3)},
  FJOURNAL = {Proceedings of the London Mathematical Society. Third Series},
    VOLUME = {111},
      YEAR = {2015},
    NUMBER = {2},
     PAGES = {305--353},
      ISSN = {0024-6115},
   MRCLASS = {35Q55 (35B35 35C08 35Q56)},
  MRNUMBER = {3384514},
MRREVIEWER = {C\u{a}t\u{a}lin-George Lefter},
       DOI = {10.1112/plms/pdv025},
       URL = {https://doi.org/10.1112/plms/pdv025},
}

@article {Roman2019,
    AUTHOR = {Rom\'{a}n, Carlos},
     TITLE = {On the first critical field in the three dimensional
              {G}inzburg-{L}andau model of superconductivity},
   JOURNAL = {Comm. Math. Phys.},
  FJOURNAL = {Communications in Mathematical Physics},
    VOLUME = {367},
      YEAR = {2019},
    NUMBER = {1},
     PAGES = {317--349},
      ISSN = {0010-3616},
   MRCLASS = {82D55 (78A25 81V99)},
  MRNUMBER = {3933412},
MRREVIEWER = {Changxing Miao},
       DOI = {10.1007/s00220-019-03306-w},
       URL = {https://doi.org/10.1007/s00220-019-03306-w},
}

@article {RomanSandierSerfaty2023,
    AUTHOR = {Rom\'{a}n, Carlos and Sandier, Etienne and Serfaty, Sylvia},
     TITLE = {Bounded vorticity for the 3{D} {G}inzburg-{L}andau model and
              an isoflux problem},
   JOURNAL = {Proc. Lond. Math. Soc. (3)},
  FJOURNAL = {Proceedings of the London Mathematical Society. Third Series},
    VOLUME = {126},
      YEAR = {2023},
    NUMBER = {3},
     PAGES = {1015--1062},
      ISSN = {0024-6115},
   MRCLASS = {35Q56 (35J50 82D55)},
  MRNUMBER = {4563866},
MRREVIEWER = {Xingbin Pan},
       DOI = {10.1112/plms.12505},
       URL = {https://doi.org/10.1112/plms.12505},
}

@article {Alberti2001,
    AUTHOR = {Alberti, Giovanni},
     TITLE = {A variational convergence result for {G}inzburg-{L}andau
              functionals in any dimension},
   JOURNAL = {Boll. Unione Mat. Ital. Sez. B Artic. Ric. Mat. (8)},
  FJOURNAL = {Bollettino della Unione Matematica Italiana. Serie VIII.
              Sezione B. Articoli di Ricerca Matematica},
    VOLUME = {4},
      YEAR = {2001},
    NUMBER = {2},
     PAGES = {289--310},
      ISSN = {0392-4041},
   MRCLASS = {35J20 (35B25 35B27 58E50)},
  MRNUMBER = {1831990},
MRREVIEWER = {Tiziana Giorgi},
}

@article {BethuelBrezisOrlandi2001,
    AUTHOR = {Bethuel, F. and Brezis, H. and Orlandi, G.},
     TITLE = {Asymptotics for the {G}inzburg-{L}andau equation in arbitrary
              dimensions},
   JOURNAL = {J. Funct. Anal.},
  FJOURNAL = {Journal of Functional Analysis},
    VOLUME = {186},
      YEAR = {2001},
    NUMBER = {2},
     PAGES = {432--520},
      ISSN = {0022-1236},
   MRCLASS = {35J60 (35B25 35J65 47J30 58E50)},
  MRNUMBER = {1864830},
MRREVIEWER = {Peter Tak\'{a}\v{c}},
       DOI = {10.1006/jfan.2001.3791},
       URL = {https://doi.org/10.1006/jfan.2001.3791},
}

@article {BethuelGravejatSaut2009,
    AUTHOR = {B\'{e}thuel, Fabrice and Gravejat, Philippe and Saut, Jean-Claude},
     TITLE = {Travelling waves for the {G}ross-{P}itaevskii equation. {II}},
   JOURNAL = {Comm. Math. Phys.},
  FJOURNAL = {Communications in Mathematical Physics},
    VOLUME = {285},
      YEAR = {2009},
    NUMBER = {2},
     PAGES = {567--651},
      ISSN = {0010-3616},
   MRCLASS = {35Q55 (35C07 47J30 49J10)},
  MRNUMBER = {2461988},
MRREVIEWER = {Anjan Kundu},
       DOI = {10.1007/s00220-008-0614-2},
       URL = {https://doi.org/10.1007/s00220-008-0614-2},
}

@article {BethuelGravejatSautSmets2008,
    AUTHOR = {B\'{e}thuel, Fabrice and Gravejat, Philippe and Saut, Jean-Claude
              and Smets, Didier},
     TITLE = {Orbital stability of the black soliton for the
              {G}ross-{P}itaevskii equation},
   JOURNAL = {Indiana Univ. Math. J.},
  FJOURNAL = {Indiana University Mathematics Journal},
    VOLUME = {57},
      YEAR = {2008},
    NUMBER = {6},
     PAGES = {2611--2642},
      ISSN = {0022-2518},
   MRCLASS = {35Q55 (35B35 35C07 35C08)},
  MRNUMBER = {2482993},
MRREVIEWER = {Hiroaki Kikuchi},
       DOI = {10.1512/iumj.2008.57.3632},
       URL = {https://doi.org/10.1512/iumj.2008.57.3632},
}

@article {Maris2013,
    AUTHOR = {Mari\c{s}, Mihai},
     TITLE = {Traveling waves for nonlinear {S}chr\"{o}dinger equations with
              nonzero conditions at infinity},
   JOURNAL = {Ann. of Math. (2)},
  FJOURNAL = {Annals of Mathematics. Second Series},
    VOLUME = {178},
      YEAR = {2013},
    NUMBER = {1},
     PAGES = {107--182},
      ISSN = {0003-486X},
   MRCLASS = {35Q55 (35C07)},
  MRNUMBER = {3043579},
MRREVIEWER = {Noel Frederick Smyth},
       DOI = {10.4007/annals.2013.178.1.2},
       URL = {https://doi.org/10.4007/annals.2013.178.1.2},
}

@article {chiron2004,
    AUTHOR = {Chiron, David},
     TITLE = {Travelling waves for the {G}ross-{P}itaevskii equation in
              dimension larger than two},
   JOURNAL = {Nonlinear Anal.},
  FJOURNAL = {Nonlinear Analysis. Theory, Methods \& Applications. An
              International Multidisciplinary Journal},
    VOLUME = {58},
      YEAR = {2004},
    NUMBER = {1-2},
     PAGES = {175--204},
      ISSN = {0362-546X},
   MRCLASS = {35Q55 (82D50)},
  MRNUMBER = {2070812},
MRREVIEWER = {Barbara Prinari},
       DOI = {10.1016/j.na.2003.10.028},
       URL = {https://doi.org/10.1016/j.na.2003.10.028},
}

@article {chiron2005,
    AUTHOR = {Chiron, David},
     TITLE = {Boundary problems for the {G}inzburg-{L}andau equation},
   JOURNAL = {Commun. Contemp. Math.},
  FJOURNAL = {Communications in Contemporary Mathematics},
    VOLUME = {7},
      YEAR = {2005},
    NUMBER = {5},
     PAGES = {597--648},
      ISSN = {0219-1997},
   MRCLASS = {35Q55 (35B25 35J25 35J60 47J30 49Q15)},
  MRNUMBER = {2175092},
MRREVIEWER = {Anna Maria Candela},
       DOI = {10.1142/S0219199705001908},
       URL = {https://doi.org/10.1142/S0219199705001908},
}

@article {corona2024multiplicity,
    AUTHOR = {Corona, D. and Nardulli, S. and Oliver-Bonafoux, R. and
              Orlandi, G. and Piccione, P.},
     TITLE = {Multiplicity results for mass constrained {A}llen-{C}ahn
              equations on {R}iemannian manifolds with boundary},
   JOURNAL = {Math. Ann.},
  FJOURNAL = {Mathematische Annalen},
    VOLUME = {392},
      YEAR = {2025},
    NUMBER = {3},
     PAGES = {3479--3524},
      ISSN = {0025-5831},
   MRCLASS = {35A15 (35R01 49J45 49Q20 58J32)},
  MRNUMBER = {4939669},
       DOI = {10.1007/s00208-025-03178-7},
       URL = {https://doi.org/10.1007/s00208-025-03178-7},
}

@article {Almgren1986,
    AUTHOR = {Almgren, F.},
     TITLE = {Optimal isoperimetric inequalities},
   JOURNAL = {Indiana Univ. Math. J.},
  FJOURNAL = {Indiana University Mathematics Journal},
    VOLUME = {35},
      YEAR = {1986},
    NUMBER = {3},
     PAGES = {451--547},
      ISSN = {0022-2518},
   MRCLASS = {49F20 (58E35)},
  MRNUMBER = {855173},
MRREVIEWER = {Demeter Krupka},
       DOI = {10.1512/iumj.1986.35.35028},
       URL = {https://doi.org/10.1512/iumj.1986.35.35028},
}

@article {AlbertiBaldoOrlandi-Indiana2005,
	AUTHOR = {Alberti, G. and Baldo, S. and Orlandi, G.},
	TITLE = {Variational convergence for functionals of {G}inzburg-{L}andau
			 type},
	JOURNAL = {Indiana Univ. Math. J.},
	FJOURNAL = {Indiana University Mathematics Journal},
	VOLUME = {54},
	YEAR = {2005},
	NUMBER = {5},
	PAGES = {1411--1472},
	ISSN = {0022-2518},
	MRCLASS = {49J45 (35J60 49Q05 58E50)},
	MRNUMBER = {2177107},
	MRREVIEWER = {U. D'Ambrosio},
	DOI = {10.1512/iumj.2005.54.2601},
	URL = {https://doi.org/10.1512/iumj.2005.54.2601},
}

@article {Mazzeo2017,
    AUTHOR = {Mazzeo, Rafe and Pacard, Frank and Zolotareva, Tatiana},
     TITLE = {Higher codimension isoperimetric problems},
   JOURNAL = {Ann. Sc. Norm. Super. Pisa Cl. Sci. (5)},
  FJOURNAL = {Annali della Scuola Normale Superiore di Pisa. Classe di
              Scienze. Serie V},
    VOLUME = {17},
      YEAR = {2017},
    NUMBER = {3},
     PAGES = {819--851},
      ISSN = {0391-173X},
   MRCLASS = {53A10 (49Q05 49Q20)},
  MRNUMBER = {3726828},
MRREVIEWER = {Andrea Mondino},
}

@Article{AlbertiBaldoOrlandiJEMS,
 Author = {Alberti, G. and Baldo, S. and Orlandi, G.},
 Title = {Functions with prescribed singularities},
 FJournal = {Journal of the European Mathematical Society (JEMS)},
 Journal = {J. Eur. Math. Soc. (JEMS)},
 ISSN = {1435-9855},
 Volume = {5},
 Number = {3},
 Pages = {275--311},
 Year = {2003},
 Language = {English},
 DOI = {10.1007/s10097-003-0053-5},
 Keywords = {46E35,53C65,49Q15,26B10,58A25},
 zbMATH = {1993528},
 Zbl = {1033.46028}
}

@book {GMS-VolI,
    AUTHOR = {Giaquinta, Mariano and Modica, Giuseppe and Sou\v{c}ek, Ji\v{r}\'{\i}},
     TITLE = {Cartesian currents in the calculus of variations. {I}},
    SERIES = {Ergebnisse der Mathematik und ihrer Grenzgebiete. 3. Folge. A
              Series of Modern Surveys in Mathematics [Results in
              Mathematics and Related Areas. 3rd Series. A Series of Modern
              Surveys in Mathematics]},
    VOLUME = {37},
      NOTE = {Cartesian currents},
 PUBLISHER = {Springer-Verlag, Berlin},
      YEAR = {1998},
     PAGES = {xxiv+711},
      ISBN = {3-540-64009-6},
   MRCLASS = {49-02 (49Jxx 49Qxx 58A25)},
  MRNUMBER = {1645086},
MRREVIEWER = {Ernst C. Kuwert},
       DOI = {10.1007/978-3-662-06218-0},
       URL = {https://doi.org/10.1007/978-3-662-06218-0},
}

@book {BBH,
    AUTHOR = {Bethuel, Fabrice and Brezis, Ha\"{\i}m and H\'{e}lein, Fr\'{e}d\'{e}ric},
     TITLE = {Ginzburg-{L}andau vortices},
    SERIES = {Progress in Nonlinear Differential Equations and their
              Applications},
    VOLUME = {13},
 PUBLISHER = {Birkh\"{a}user Boston, Inc., Boston, MA},
      YEAR = {1994},
     PAGES = {xxviii+159},
      ISBN = {0-8176-3723-0},
   MRCLASS = {58E20 (35Q55 49-02 58E50 82D50)},
  MRNUMBER = {1269538},
       DOI = {10.1007/978-1-4612-0287-5},
       URL = {https://doi.org/10.1007/978-1-4612-0287-5},
}

@article {MorganSalavessa,
    AUTHOR = {Morgan, Frank and Salavessa, Isabel M. C.},
     TITLE = {The isoperimetric problem in higher codimension},
   JOURNAL = {Manuscripta Math.},
  FJOURNAL = {Manuscripta Mathematica},
    VOLUME = {142},
      YEAR = {2013},
    NUMBER = {3-4},
     PAGES = {369--382},
      ISSN = {0025-2611},
   MRCLASS = {49Q20 (52C25 53A10)},
  MRNUMBER = {3117167},
MRREVIEWER = {Paolo Piccione},
       DOI = {10.1007/s00229-012-0604-8},
       URL = {https://doi.org/10.1007/s00229-012-0604-8},
}

@article {AlmeidaBethuel,
    AUTHOR = {Almeida, Lu\'{\i}s and Bethuel, Fabrice},
     TITLE = {Topological methods for the {G}inzburg-{L}andau equations},
   JOURNAL = {J. Math. Pures Appl. (9)},
  FJOURNAL = {Journal de Math\'{e}matiques Pures et Appliqu\'{e}es. Neuvi\`eme S\'{e}rie},
    VOLUME = {77},
      YEAR = {1998},
    NUMBER = {1},
     PAGES = {1--49},
      ISSN = {0021-7824},
   MRCLASS = {35Q55 (58E50)},
  MRNUMBER = {1617594},
MRREVIEWER = {Song Mu Zheng},
       DOI = {10.1016/S0021-7824(98)80064-0},
       URL = {https://doi.org/10.1016/S0021-7824(98)80064-0},
}

@article {ColinetJerrardSternberg,
    AUTHOR = {Colinet, Andrew and Jerrard, Robert and Sternberg, Peter},
     TITLE = {Solutions of the {G}inzburg-{L}andau equations with vorticity
              concentrating near a nondegenerate geodesic},
   JOURNAL = {J. Eur. Math. Soc. (JEMS)},
  FJOURNAL = {Journal of the European Mathematical Society (JEMS)},
    VOLUME = {27},
      YEAR = {2025},
    NUMBER = {4},
     PAGES = {1527--1561},
      ISSN = {1435-9855},
   MRCLASS = {49J45 (35Q56 49Q20)},
  MRNUMBER = {4875611},
       DOI = {10.4171/jems/1397},
       URL = {https://doi.org/10.4171/jems/1397},
}

@article {PigatiStern,
    AUTHOR = {Pigati, Alessandro and Stern, Daniel},
     TITLE = {Minimal submanifolds from the abelian {H}iggs model},
   JOURNAL = {Invent. Math.},
  FJOURNAL = {Inventiones Mathematicae},
    VOLUME = {223},
      YEAR = {2021},
    NUMBER = {3},
     PAGES = {1027--1095},
      ISSN = {0020-9910},
   MRCLASS = {58E12 (35K57)},
  MRNUMBER = {4213771},
MRREVIEWER = {Panayotis Vyridis},
       DOI = {10.1007/s00222-020-01000-6},
       URL = {https://doi.org/10.1007/s00222-020-01000-6},
}

@article {Stern,
    AUTHOR = {Stern, Daniel},
     TITLE = {Existence and limiting behavior of min-max solutions of the
              {G}inzburg-{L}andau equations on compact manifolds},
   JOURNAL = {J. Differential Geom.},
  FJOURNAL = {Journal of Differential Geometry},
    VOLUME = {118},
      YEAR = {2021},
    NUMBER = {2},
     PAGES = {335--371},
      ISSN = {0022-040X},
   MRCLASS = {58E05 (49J45 49Q20)},
  MRNUMBER = {4278697},
MRREVIEWER = {Giandomenico Orlandi},
       DOI = {10.4310/jdg/1622743143},
       URL = {https://doi.org/10.4310/jdg/1622743143},
}

@article {JerrardSternberg2009,
    AUTHOR = {Jerrard, Robert L. and Sternberg, Peter},
     TITLE = {Critical points via {$\Gamma$}-convergence: general theory and
              applications},
   JOURNAL = {J. Eur. Math. Soc. (JEMS)},
  FJOURNAL = {Journal of the European Mathematical Society (JEMS)},
    VOLUME = {11},
      YEAR = {2009},
    NUMBER = {4},
     PAGES = {705--753},
      ISSN = {1435-9855},
   MRCLASS = {49J45 (58E05)},
  MRNUMBER = {2538502},
MRREVIEWER = {Paolo Piccione},
       DOI = {10.4171/JEMS/164},
       URL = {https://doi.org/10.4171/JEMS/164},
}

@article {ZhouZhou,
    AUTHOR = {Feng, Zhou and Zhou, Qing},
     TITLE = {A remark on multiplicity of solutions for the
              {G}inzburg-{L}andau equation},
   JOURNAL = {Ann. Inst. H. Poincar\'{e} C Anal. Non Lin\'{e}aire},
  FJOURNAL = {Annales de l'Institut Henri Poincar\'{e} C. Analyse Non Lin\'{e}aire},
    VOLUME = {16},
      YEAR = {1999},
    NUMBER = {2},
     PAGES = {255--267},
      ISSN = {0294-1449},
   MRCLASS = {35Q55 (35J60 58E05)},
  MRNUMBER = {1674771},
MRREVIEWER = {Vitaly B. Moroz},
       DOI = {10.1016/S0294-1449(99)80014-6},
       URL = {https://doi.org/10.1016/S0294-1449(99)80014-6},
}

@article {Taylor,
    AUTHOR = {Taylor, Michael},
     TITLE = {Traveling wave solutions to {NLS} and {NLKG} equations in
              non-{E}uclidean settings},
   JOURNAL = {Houston J. Math.},
  FJOURNAL = {Houston Journal of Mathematics},
    VOLUME = {42},
      YEAR = {2016},
    NUMBER = {1},
     PAGES = {143--165},
      ISSN = {0362-1588},
   MRCLASS = {35Q55 (35C07)},
  MRNUMBER = {3502776},
MRREVIEWER = {Amin Esfahani},
}

@article {Mukherjee,
    AUTHOR = {Mukherjee, Mayukh},
     TITLE = {Nonlinear travelling waves on complete {R}iemannian manifolds},
   JOURNAL = {Adv. Differential Equations},
  FJOURNAL = {Advances in Differential Equations},
    VOLUME = {23},
      YEAR = {2018},
    NUMBER = {1-2},
     PAGES = {65--88},
      ISSN = {1079-9389},
   MRCLASS = {58J45 (35C08 35H20 35J61 35L71 35Q55 35R01)},
  MRNUMBER = {3717162},
MRREVIEWER = {Peter R. Popivanov},
       URL = {https://projecteuclid.org/euclid.ade/1508983360},
}

@book {BartschBook,
    AUTHOR = {Bartsch, Thomas},
     TITLE = {Topological methods for variational problems with symmetries},
    SERIES = {Lecture Notes in Mathematics},
    VOLUME = {1560},
 PUBLISHER = {Springer-Verlag, Berlin},
      YEAR = {1993},
     PAGES = {x+152},
      ISBN = {3-540-57378-X},
   MRCLASS = {58E40 (34C23 35B32 47H15 55M30 55N91 58E05)},
  MRNUMBER = {1295238},
MRREVIEWER = {James F. Reineck},
       DOI = {10.1007/BFb0073859},
       URL = {https://doi.org/10.1007/BFb0073859},
}

@article {Salavessa2010,
    AUTHOR = {Salavessa, Isabel M. C.},
     TITLE = {Stability of submanifolds with parallel mean curvature in
              calibrated manifolds},
   JOURNAL = {Bull. Braz. Math. Soc. (N.S.)},
  FJOURNAL = {Bulletin of the Brazilian Mathematical Society. New Series.
              Boletim da Sociedade Brasileira de Matem\'{a}tica},
    VOLUME = {41},
      YEAR = {2010},
    NUMBER = {4},
     PAGES = {495--530},
      ISSN = {1678-7544},
   MRCLASS = {53C38 (53C42)},
  MRNUMBER = {2737314},
MRREVIEWER = {Andreas Savas-Halilaj},
       DOI = {10.1007/s00574-010-0023-y},
       URL = {https://doi.org/10.1007/s00574-010-0023-y},
}

\end{document}